\date{}
\title[Evolution equations associated with fractional Shubin operators]{Null-controllability of evolution equations associated with fractional Shubin operators through quantitative Agmon estimates}
\author{Paul Alphonse}
\address{Universit\'e de Lyon, ENSL, UMPA - UMR 5669, F-69364 Lyon}
\email{paul.alphonse@ens-lyon.fr}
\keywords{Null-controllability, Gelfand-Shilov regularity, Agmon estimates, Pseudodifferential calculus, Anisotropic Shubin operators}
\subjclass[2020]{93B05, 35B65,  35P10}
\numberwithin{equation}{section}
\newtheorem{thm}{Theorem}[section]
\newtheorem{prop}[thm]{Proposition}
\newtheorem{lem}[thm]{Lemma}
\newtheorem{cor}[thm]{Corollary}
\newtheorem{dfn}[thm]{Definition}
\theoremstyle{definition}
\DeclareMathOperator{\ad}{ad}
\DeclareMathOperator{\Leb}{Leb}
\DeclareMathOperator{\Supp}{Supp}
\DeclareMathOperator{\Vect}{Vect}
\begin{document}

\sloppy

\selectlanguage{english}

\begin{abstract} We consider the anisotropic Shubin operators $(-\Delta)^m + \vert x\vert^{2k}$ acting on the space $L^2(\mathbb R^n)$, with $k,m\geq1$ some positive integers. We provide sharp quantitative estimates in  Gelfand-Shilov spaces for the eigenfunctions of these selfadjoint differential operators with a strategy based on the classical approach to obtain Agmon estimates in spectral theory. By using a Weyl law for the eigenvalues of the anisotropic Shubin operators, we also describe the smoothing properties of the semigroups generated by the fractional powers of these operators, with precise estimates in short times. This description allows us to prove positive null-controllability results  for the associated evolution equations posed on the whole space $\mathbb R^n$, from control supports which are thick with respect to densities and in any positive time. We generalize in particular results known for the evolution equations associated with fractional harmonic oscillators.
\end{abstract}

\maketitle

\section{Introduction}
This paper is devoted in studying the smoothing properties and the null-controllability of the evolution equations associated with fractional anisotropic Shubin operators $H^s_{k,m}$ and posed on the whole space $\mathbb R^n$. These non-local operators $H^s_{k,m}$ are defined through the functional calculus as the fractional powers of the following anisotropic selfadjoint elliptic operators
\begin{equation}\label{19022020E1}
	H_{k,m} = (-\Delta)^m + \vert x\vert^{2k},\quad x\in\mathbb R^n,
\end{equation}
which we consider equipped with the domains
\begin{equation}\label{25092020E1}
	D(H_{k,m}) = \big\{g\in L^2(\mathbb R^n) : H_{k,m}g\in L^2(\mathbb R^n)\big\},
\end{equation}
where $k,m\geq1$ are two positive integers and $s>0$ is a positive real number. These operators naturally arise in physical models. For example, the fractional harmonic oscillator $H^s_{1,1}$ appears in the kinetic theory of gases \cite{LMPSX1, LMPSX2, LMPSX3}. Another example is given by the quantum anharmonic oscillators $H_{1,k}$ involved in quantum mechanics.

The study of the \textit{null-controllability} of evolution equations posed on the whole space $\mathbb R^n$, of elliptic type or degenerate of hypoelliptic type, and also the Schr\"odinger counterparts of such equations, has been much addressed in the last years \cite{A, AB, BEPS, BJKPS, MR3732691, K, Ko, MPS, MPS2}. Although considerable progress have already been made, the understanding of these equations is still at an early stage, in opposite to the same models posed on bounded domains of $\mathbb R^n$, for which many behaviors have been highlighted, see e.g. the introduction of \cite{MR3732691}. In this work, we tackle null-controllability issues for the following evolution equations associated with fractional anisotropic Shubin operators
\begin{equation}\label{27112020E1}\tag{$E_{s,k,m}$}
\left\{\begin{aligned}
	& \partial_tf(t,x) + H^s_{k,m}f(t,x) = h(t,x)\mathbbm 1_{\omega}(x),\quad t>0,\ x\in\mathbb R^n, \\
	& f(0,\cdot) = f_0\in L^2(\mathbb R^n).
\end{aligned}\right.
\end{equation}
On the one hand, we prove that the equation \eqref{27112020E1} is null-controllable from \textit{thick} control supports $\omega\subset\mathbb R^n$ in any positive time $T>0$, under the large diffusion assumption $2sm>1$. The notion of thickness has appeared to be central in the null-controllability theory since the works \cite{MR3816981, WZ}, where the authors established that this is a necessary and sufficient geometric condition that ensures the null-controllability of the heat equation posed on $\mathbb R^n$. The same phenomena holds true more generally for the evolution equations associated with fractional Laplacians $(-\Delta)^s$ under the same setting and when $s>1/2$, as proven in \cite{AB}, and also quite surprisingly for the Schr\"odinger counterpart of this equation in the one dimensional setting and when $s\geq1/2$, see \cite{MPS2}. It is also known from \cite{K, Ko} that in the cases $0<s\le1/2$, the fractional heat equations are not null-controllable from thick control supports anymore. In the recent work \cite{AM}, the notion of thickness has appeared to be a necessary and sufficient condition to ensure the stabilization or the approximate null-controllability with uniform cost (which are notions weaker than the null-controllability) of a very large class of diffusive equations posed on $\mathbb R^n$, including in particular the half heat equation associated with the operator $(-\Delta)^{1/2}$. Finally, let us mention that other classes of degenerate parabolic equations of hypoelliptic type, as evolution equations associated with accretive quadratic operators or (non-autonomous) Ornstein-Uhlenbeck operators, were proven to be null-controllable from thick control supports, see \cite{A, BEPS, BJKPS}. On the other hand, we establish that in the isotropic case where $k=m=l$, the equation (\hyperref[01202020E1]{$E_{s,l,l}$}) is null-controllable in any positive time $T>0$ from control supports which are \textit{thick with respect to densities}. This notion, which is an extension of the thickness property, was introduced in the work \cite{MPS} in order to tackle null-controllability issues for evolution equations enjoying strong smoothing properties in symmetric Gelfand-Shilov spaces. In particular, we generalize a result from \cite{MPS} concerning the null-controllability of fractional heat harmonic equations. Finally, we prove that in the more specific case $k=m=1$ and $s>1$, the equation (\hyperref[01202020E1]{$E_{s,1,1}$}) is always null-controllable in any positive time $T>0$ from any support control $\omega\subset\mathbb R^n$ which is measurable with positive Lebesgue measure.

These null-controllability issues motivate the study of the \textit{smoothing properties} of semigroups generated by selfadjoint or non-seladjoint accretive operators, which is also natural and interesting in itself \cite{A, AB,AB2, BEPS}. The major part of the present work consists in fact in describing the regularizing effects of the semigroups generated by fractional anisotropic Shubin operators $H^s_{k,m}$ on $L^2(\mathbb R^n)$. Precisely, we prove that the evolution operators generated by these operators enjoy smoothing properties in \textit{Gelfand-Shilov spaces} 
in any positive time $t>0$, 
$$\forall t>0, \forall g\in L^2(\mathbb R^n),\quad e^{-tH^s_{k,m}}g\in S^{\mu_{s,k,m}}_{\nu_{s,k,m}}(\mathbb R^n),$$
with the regularity exponents $\nu_{s,k,m}>0$ and $\mu_{s,k,m}>0$ given by
$$\nu_{s,k,m} = \max\bigg(\frac1{2sk},\frac m{k+m}\bigg)\quad\text{and}\quad\mu_{s,k,m} = \max\bigg(\frac1{2sm},\frac k{k+m}\bigg),$$
by providing the following quantitative estimates for the associated seminorms in short times $0<t\ll1$, 
$$\big\Vert x^{\alpha}\partial^{\beta}_x(e^{-tH^s_{k,m}}g)\big\Vert_{L^2(\mathbb R^n)}\le\frac{C^{\vert\alpha\vert+\vert\beta\vert}}{t^{\nu_{s,k,m}\vert\alpha\vert+\mu_{s,k,m}\vert\beta\vert+\frac{n(k+m)}{2skm}}}\, (\alpha!)^{\nu_{s,k,m}}\, (\beta!)^{\mu_{s,k,m}}\, \Vert g\Vert_{L^2(\mathbb R^n)}.$$
The strategy consists in first obtaining the following sharp quantitative \textit{Agmon estimates} for the eigenfunctions associated with the anisotropic Shubin operators as follows
$$\big\Vert e^{c_1t\langle x\rangle^{\sigma(1+\frac km)}}\psi\big\Vert_{L^2(\mathbb R^n)} + \big\Vert e^{c_1t\langle D_x\rangle^{\sigma(1+\frac mk)}}\psi\big\Vert_{L^2(\mathbb R^n)}\le c_2e^{c_2t\lambda^{\sigma(\frac1{2k}+\frac1{2m})}}\Vert\psi\Vert_{L^2(\mathbb R^n)},$$
where $\lambda>0$ is eigenvalue associated with the eigenfunction $\psi$ of the operators $H_{k,m}$ and $0\le t\le T$, $0\le\sigma\le1$ are some parameters. These Agmon estimates combined with a Weyl law from \cite{BBR} then allows to obtain the above smoothing properties of the semigroups generated by the fractional anisotropic Shubin operators. From a spectral point of view, the anisotropic Shubin operators have been widely studied in the last decades, from the work \cite{V} on the quartic oscillator $H_{1,4}$ and the works \cite{HR1, HR2} on a class of anharmonic oscillators containing the quantum harmonic oscillators $H_{1,k}$, or the papers \cite{H,R} considering the symmetric case $k=l$. The regularity of the eigenfunctions of general anisotropic Shubin operators has already been studied qualitatively in the work \cite{MR2747070}. Let us also mention the paper \cite{CDR} in which a general class of anisotropic Shubin operators is studied within the framework of the Weyl-H\"ormander calculus and where spectral properties in terms of Schatten-von Neumann classes for the negative powers of these operators are obtained.

\subsubsection*{Outline of the work} In Section \ref{results}, we present in details the main results contained in this work. Section \ref{nullcont} is devoted to the proofs of the positive null-controllability results for the evolution equations associated with fractional anisotropic Shubin operators. Quantitative Agmon estimates for the eigenfunctions of these operators are obtained in Section \ref{agmon}, which allow to describe the smoothing properties of the semigroups generated by their fractional powers in Section \ref{smoothing}. The proof of these Agmon estimates require a technical G{\aa}rding type inequality obtained in Section \ref{secgard}. Finally, basics of Gelfand-Shilov spaces are presented in Section \ref{appendix}, which is an Appendix also containing a microlocal result dealing with the density of the Schwartz space in the graph of the differential operators.

\subsubsection*{Notations} The following notations and conventions will be used all over the work:
\begin{enumerate}[label=\textbf{\arabic*.},leftmargin=* ,parsep=2pt,itemsep=0pt,topsep=2pt]
\item The canonical Euclidean scalar product of $\mathbb R^n$ is denoted by $\cdot$ and $\vert\cdot\vert$ stands for the associated canonical Euclidean norm. The Japanese bracket $\langle\cdot\rangle$ is defined for all $x\in\mathbb R^n$ by $\langle x\rangle = \sqrt{1+\vert x\vert^2}$.
\item For all measurable subset $\omega\subset\mathbb R^n$, the inner product of $L^2(\omega)$ is defined by
$$\langle u,v\rangle_{L^2(\omega)} = \int_{\omega}u(x)\overline{v(x)}\ \mathrm dx,\quad u,v\in L^2(\omega),$$
while $\Vert\cdot\Vert_{L^2(\omega)}$ stands for the associated norm.
\item For all function $u\in\mathscr S(\mathbb R^n)$, the Fourier transform of $u$ is defined by
$$\widehat u(\xi) = \int_{\mathbb R^n}e^{-ix\cdot\xi}u(x)\ \mathrm dx.$$
With this convention, Plancherel's theorem states that 
$$\forall u\in L^2(\mathbb R^n),\quad \Vert\widehat u\Vert_{L^2(\mathbb R^n)} = (2\pi)^{n/2}\Vert u\Vert_{L^2(\mathbb R^n)}.$$
\item We denote the gradient by $\nabla_x$ and the Laplacian operator by $\Delta$. Moreover, we set $D_x = -i\nabla_x$ and for all $q>0$, we define by $\langle D_x\rangle^q$ the Fourier multiplier associated with the symbol $\langle\xi\rangle^q$.
\item We use the notation $H^q(\mathbb R^n)$ for the Sobolev spaces, with $q\geq0$ non-negative real numbers, and we denote by $\dot H^q(\mathbb R^n)$ their homogeneous counterparts.
\item The space $C^{\infty}_b(\mathbb R^n)$ stands for the set of smooth functions $g\in C^{\infty}(\mathbb R^n)$ with bounded derivatives.
\item For all measurable subset $\omega\subset\mathbb R^n$, $\mathbbm{1}_{\omega}$ stands for the characteristic function of $\omega$.
\end{enumerate}

\section{Statement of the main results}\label{results}

This section is devoted in presenting in details the main results contained in this work. Let us begin by quickly recalling the definition of the fractional powers of the operator $H_{k,m}$, defined in \eqref{19022020E1} and equipped with the domain \eqref{25092020E1}, that we will consider in the following. Since the operator $H_{k,m}$ is a positive anisotropic elliptic operator, there exists a Hilbert basis $(\psi_j)_j$ of $L^2(\mathbb R^n)$ composed of eigenfunctions of the operator $H_{k,m}$, see e.g. \cite{S}. Moreover, denoting $\lambda_j>0$ the eigenvalue associated with the eigenfunction $\psi_j\in L^2(\mathbb R^n)$, the family $(\lambda_j)_j$ satisfies $\lim_j\lambda_j = +\infty$. Given $s>0$ a positive real number, one can define the operator $H^s_{k,m}$ in the following way
\begin{equation}\label{20112020E3}
	\forall g\in D(H^s_{k,m}),\quad H^s_{k,m}g = \sum_{j=0}^{+\infty}\lambda_j^s\langle g,\psi_j\rangle_{L^2(\mathbb R^n)}\psi_j,
\end{equation}
equipped with  the domain
\begin{equation}\label{20112020E4}
	D(H^s_{k,m}) = \bigg\{g\in L^2(\mathbb R^n) : \sum_{j=0}^{+\infty}\lambda_j^{2s}\big\vert\langle g,\psi_j\rangle_{L^2(\mathbb R^n)}\big\vert^2<+\infty\bigg\}.
\end{equation}
Notice that the above domain coincides with \eqref{25092020E1} in the case where $s=1$, from Parseval's formula. The fractional anisotropic Shubin operator $H^s_{k,m}$ is then a selfadjoint operator that generates a strongly continuous semigroup on $L^2(\mathbb R^n)$ explicitly given by
$$\forall t\geq0, \forall g\in L^2(\mathbb R^n),\quad e^{-tH^s_{k,m}}g = \sum_{j=0}^{+\infty}e^{-t\lambda_j^s}\langle g,\psi_j\rangle_{L^2(\mathbb R^n)}\psi_j,$$
see e.g. \cite{TW} (Propositions 2.6.2 and 2.6.5).

\subsection{Quantitative Agmon estimates and smoothing properties} First of all, we aim at understanding the smoothing properties enjoyed by the semigroup generated by the fractional anisotropic Shubin operator $H^s_{k,m}$. In order to carry out this study, we begin by establishing Agmon estimates for the eigenfunctions associated with the operator $H_{k,m}$. Generally, Agmon estimates, which originate in the pioneer works \cite{Ag1, Ag2}, aim at quantifying the exponential decaying properties of eigenfunctions associated with some large classes of selfadjoint operators.

\begin{thm}\label{19022020T1} Let $k,m\geq1$ be positive integers and $H_{k,m}$ be the associated anisotropic Shubin operator defined in \eqref{19022020E1} and equipped with the domain \eqref{25092020E1}. We also consider $0\le\sigma\le1$ a non-negative real number. There exist some positive constants $c_1,c_2>0$ and $T>0$ such that for all eigenfunction $\psi\in L^2(\mathbb R^n)$ of the operator $H_{k,m}$ and $0\le t\le T$,
$$\big\Vert e^{c_1t\langle x\rangle^{\sigma(1+\frac km)}}\psi\big\Vert_{L^2(\mathbb R^n)} + \big\Vert e^{c_1t\langle D_x\rangle^{\sigma(1+\frac mk)}}\psi\big\Vert_{L^2(\mathbb R^n)}\le c_2e^{c_2t\lambda^{\sigma(\frac1{2k}+\frac1{2m})}}\Vert\psi\Vert_{L^2(\mathbb R^n)},$$
with $\lambda>0$ the eigenvalue associated with the eigenfunction $\psi$.
\end{thm}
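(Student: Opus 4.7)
The plan is to run a quantitative Agmon argument based on conjugating the operator $H_{k,m}$ by a suitably regularized exponential weight and extracting coercivity from the G{\aa}rding-type inequality established in Section~\ref{secgard}. By Fourier duality, the two estimates in the statement are not independent: conjugating by the Fourier transform $\mathcal F$ swaps $k$ and $m$, since $\mathcal F H_{k,m}\mathcal F^{-1}=(-\Delta_{\xi})^{k}+|\xi|^{2m}=H_{m,k}$ in the $\xi$-variable, and Plancherel's identity then converts the spatial-side bound for $H_{m,k}$ applied to $\widehat\psi$ into the Fourier-side bound for $H_{k,m}$ applied to $\psi$. It therefore suffices to prove the first term on the left-hand side.

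First I would introduce the weight $\Phi(x)=\langle x\rangle^{\sigma(1+k/m)}$ and a family of bounded regularizations $\Phi_R\in C^{\infty}_{b}(\mathbb R^n)$ for $R\ge 1$, satisfying $\Phi_R\nearrow\Phi$ as $R\to+\infty$, together with the uniform symbol estimates $|\partial_x^{\alpha}\Phi_R(x)|\lesssim\langle x\rangle^{\sigma(1+k/m)-|\alpha|}$. The admissibility of this weight rests on the inequality $|\nabla\Phi_R(x)|^{2m}\lesssim\langle x\rangle^{2\sigma(m+k)-2m}\le\langle x\rangle^{2k}$, which holds exactly because $\sigma\le 1$. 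I would also use the density result from the Appendix to reduce, by approximation, to the case where $\psi$ is a Schwartz function, which is needed to legitimate all of the integrations by parts.

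Next I would compute, for $0\le t\le T$, the Weyl symbol of the bounded conjugated operator
\[L_{R,t}:=e^{c_1 t\Phi_R}H_{k,m}e^{-c_1 t\Phi_R},\]
which, by the Weyl pseudodifferential calculus, equals $|\xi-ic_1 t\nabla\Phi_R(x)|^{2m}+|x|^{2k}$ modulo lower-order remainders bounded by $c_1 t(|\xi|^{2m-2}+\langle x\rangle^{2k-2}+1)$. The real part of this symbol differs from $|\xi|^{2m}+|x|^{2k}$ only by sums of monomials $(c_1 t)^{2j}|\nabla\Phi_R(x)|^{2j}|\xi|^{2m-2j}$, each of which is absorbed, when $c_1 T$ is small enough, into $\tfrac{1}{8}(|\xi|^{2m}+|x|^{2k})$ thanks to the key inequality above combined with elementary interpolation. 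Invoking the G{\aa}rding-type inequality of Section~\ref{secgard} applied to the resulting symbol then yields, uniformly in $R\ge 1$ and $t\in[0,T]$,
\[\mathrm{Re}\,\langle L_{R,t}u,u\rangle_{L^2(\mathbb R^n)}\ge \tfrac12\langle H_{k,m}u,u\rangle_{L^2(\mathbb R^n)}-C_0\|u\|_{L^2(\mathbb R^n)}^2,\qquad u\in\mathscr S(\mathbb R^n).\]

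Finally I would apply this with $u=e^{c_1 t\Phi_R}\psi$. Since $e^{c_1 t\Phi_R}$ is a bounded selfadjoint multiplication operator, the eigenvalue equation gives $\langle L_{R,t}u,u\rangle=\langle H_{k,m}\psi,e^{2c_1 t\Phi_R}\psi\rangle_{L^2}=\lambda\|u\|_{L^2}^2$, so the lower bound produces $\||x|^{k}u\|_{L^2}^2\le\langle H_{k,m}u,u\rangle_{L^2}\le 2(\lambda+C_0)\|u\|_{L^2}^2$. A Chebyshev-type splitting of $\mathbb R^n$ at the classical radius $|x|^{2k}=4(\lambda+C_0)$ absorbs the exterior contribution into $\tfrac12\|u\|_{L^2}^2$, while on the interior region one has the pointwise bound $\Phi_R(x)\le\Phi(x)\lesssim\lambda^{\sigma(\frac{1}{2k}+\frac{1}{2m})}$. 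Together these yield
\[\|e^{c_1 t\Phi_R}\psi\|_{L^2(\mathbb R^n)}^{2}\le c_2^{2}e^{2c_2 t\lambda^{\sigma(\frac{1}{2k}+\frac{1}{2m})}}\|\psi\|_{L^2(\mathbb R^n)}^{2}\]
uniformly in $R$. Letting $R\to+\infty$ and invoking monotone convergence concludes the spatial half of the theorem, and the Fourier-side half follows by the duality argument described above.

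The main technical obstacle is the uniform-in-$R$ control of the full Weyl symbol of $L_{R,t}$ together with the extraction of a usable G{\aa}rding-type lower bound: the weight $\Phi$ is only quasi-homogeneous of order $\sigma(1+k/m)$, and its successive derivatives must scale against the two terms of $H_{k,m}$ in a perfectly balanced way. This is exactly what forces the borderline restriction $\sigma\le 1$ and explains why the G{\aa}rding inequality is isolated as a separate technical result in Section~\ref{secgard}.
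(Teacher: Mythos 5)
Your plan follows essentially the same route as the paper's proof: regularize the weight $\langle x\rangle^{\sigma(1+k/m)}$, conjugate $H_{k,m}$ by the exponential of this weight, invoke the G{\aa}rding-type bound of Section~\ref{secgard} to control the conjugated $(-\Delta)^m$ part, split the resulting integral inequality at the classical radius $|x|^{2k}\sim\lambda$, and remove the regularization at the end (the paper uses Fatou rather than monotone convergence, but either works). Two small points deserve care. First, you cannot literally ``reduce to the case $\psi\in\mathscr S(\mathbb R^n)$'' and at the same time use the exact identity $\langle L_{R,t}u,u\rangle_{L^2}=\lambda\Vert u\Vert_{L^2}^2$: the Schwartz approximants $\psi_j$ of $\psi$ provided by Proposition~\ref{07102020P1} are not eigenfunctions, and the paper instead keeps the defect $\Vert H_{k,m}\psi_j-\lambda\psi_j\Vert_{L^2}$ explicit on the right-hand side and lets it vanish in the limit $j\to\infty$. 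Second, the inequality $\mathrm{Re}\,\langle L_{R,t}u,u\rangle\geq\tfrac12\langle H_{k,m}u,u\rangle-C_0\Vert u\Vert^2$ that you attribute to Section~\ref{secgard} is not literally Proposition~\ref{19022020P1}, which only treats the conjugated $(-\Delta)^m$ piece and still carries an error $c_0t\Vert\langle x\rangle^{\sigma k}v\Vert_{L^2}^2$; your stronger form is obtained by then absorbing this error into $\tfrac12\Vert|x|^kv\Vert_{L^2}^2$ for $t$ small (using $\sigma\le1$), precisely the absorption the paper performs in the body of Section~\ref{agmon} rather than inside the G{\aa}rding proposition.
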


This result allows to recover the Gelfand-Shilov regularity of the eigenfunctions of the operator $H_{k,m}$, which is a consequence of Theorem 1.1 in \cite{MR2747070}. We refer to Subsection \ref{GS} in Appendix for a definition of the Gelfand-Shilov spaces $S^{\mu}_{\nu}(\mathbb R^n)$ and their basic properties, with $\mu,\nu>0$ some positive real numbers satisfying $\mu+\nu\geq1$. The merit of Theorem \ref{19022020T1} is to provide a quantitative description of the regularity of those eigenfunctions. Its proof is based on the classical strategy employed to obtain Agmon estimates in spectral theory, which requires in our context to obtain a quite technical G{\aa}rding type inequality. Combined to a Weyl law for the eigenvalues of the anisotropic Shubin operators, Theorem \ref{19022020T1} allows to describe the smoothing properties of the evolution operators generated by  the fractional operator $H^s_{k,m}$.

\begin{cor}\label{02102020C1} Let $k,m\geq1$ be positive integers, $s>0$ be a positive real number, and $H^s_{k,m}$ be the associated fractional anisotropic Shubin operator defined in \eqref{20112020E3} and equipped with the domain \eqref{20112020E4}. There exist some positive constants $c_1,c_2>0$ and $0<T<1$ such that for all $0<t<T$ and $g\in L^2(\mathbb R^n)$,
$$\big\Vert e^{c_1t\langle x\rangle^{\frac1{\nu_{s,k,m}}}}(e^{-tH^s_{k,m}}g)\big\Vert_{L^2(\mathbb R^n)} + \big\Vert e^{c_1t\langle D_x\rangle^{\frac1{\mu_{s,k,m}}}}(e^{-tH^s_{k,m}}g)\big\Vert_{L^2(\mathbb R^n)}\le\frac{c_2}{t^{\frac{n(k+m)}{2skm}}}\Vert g\Vert_{L^2(\mathbb R^n)},$$
the exponents $\nu_{s,k,m}>0$ and $\mu_{s,k,m}>0$ being given by
$$\nu_{s,k,m} = \max\bigg(\frac1{2sk},\frac m{k+m}\bigg)\quad\text{and}\quad\mu_{s,k,m} = \max\bigg(\frac1{2sm},\frac k{k+m}\bigg).$$
\end{cor}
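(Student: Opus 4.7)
The plan is to expand $g$ in the Hilbert basis $(\psi_j)_{j}$ of eigenfunctions of $H_{k,m}$, apply the quantitative Agmon estimates of Theorem~\ref{19022020T1} eigenfunction by eigenfunction, and close the resulting series through the Weyl-type counting estimate for the Shubin spectrum provided in~\cite{BBR}. Since $e^{-tH^s_{k,m}}g=\sum_{j\geq 0}e^{-t\lambda_j^s}\langle g,\psi_j\rangle_{L^2(\mathbb R^n)}\psi_j$, the corollary reduces to controlling the weighted norms $\|e^{c_1t\langle x\rangle^{1/\nu_{s,k,m}}}\psi_j\|_{L^2(\mathbb R^n)}$ and their Fourier counterparts.

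\textbf{Choice of the parameter $\sigma$.} In Theorem~\ref{19022020T1} I would take
\[
\sigma=\min\Big(1,\frac{2skm}{k+m}\Big)\in[0,1].
\]
Splitting on whether $2skm\geq k+m$ or not, one checks that this single value simultaneously yields $\sigma(1+k/m)=1/\nu_{s,k,m}$ and $\sigma(1+m/k)=1/\mu_{s,k,m}$, so both weights appearing in the corollary are produced in one shot. Moreover, the corresponding exponent on the right-hand side of the Agmon estimate simplifies to
\[
\sigma\Big(\frac{1}{2k}+\frac{1}{2m}\Big)=\min\Big(\frac{k+m}{2km},\,s\Big)\leq s,
\]
which is the key subordination property.

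\textbf{Closing the series.} By rescaling $t\mapsto t/K$ in Theorem~\ref{19022020T1} with $K\geq 1$ large, the constants $(c_1,c_2)$ in the exponentials are effectively replaced by $(c_1/K,c_2/K)$. Combined with the subordination $\sigma\big(\tfrac{1}{2k}+\tfrac{1}{2m}\big)\leq s$ obtained above and a crude bound on the contributions of the finitely many eigenfunctions with $\lambda_j$ below a fixed threshold, this gives
\[
e^{-t\lambda_j^s}\,\big\|e^{(c_1/K)t\langle x\rangle^{1/\nu_{s,k,m}}}\psi_j\big\|_{L^2(\mathbb R^n)}\leq C\,e^{-t\lambda_j^s/2},
\]
for every $j\geq 0$ and every $0<t<T$. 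Inserting this into the eigenfunction expansion via the triangle inequality and then applying the Cauchy-Schwarz inequality yields
\[
\big\|e^{(c_1/K)t\langle x\rangle^{1/\nu_{s,k,m}}}e^{-tH^s_{k,m}}g\big\|_{L^2(\mathbb R^n)}\leq C\Big(\sum_{j\geq 0}e^{-t\lambda_j^s}\Big)^{1/2}\|g\|_{L^2(\mathbb R^n)}.
\]
The Weyl law $\#\{j:\lambda_j\leq\Lambda\}\lesssim\Lambda^{n(k+m)/(2km)}$ from~\cite{BBR}, together with a change of variables in the integral representation of the trace-type sum, gives $\sum_{j\geq 0}e^{-t\lambda_j^s}\lesssim t^{-n(k+m)/(2skm)}$ for $0<t<1$, and thus a final bound of order $t^{-n(k+m)/(4skm)}$, which is in particular dominated by the stated $t^{-n(k+m)/(2skm)}$ on the interval $0<t<T<1$. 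The $\langle D_x\rangle$-weighted part of the corollary is obtained identically by using the second half of Theorem~\ref{19022020T1}.

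\textbf{Main obstacle.} The most delicate step I expect is the parameter matching: the single value of $\sigma$ has to produce simultaneously both exponents $1/\nu_{s,k,m}$ and $1/\mu_{s,k,m}$ and at the same time satisfy the subordination $\sigma\big(\tfrac{1}{2k}+\tfrac{1}{2m}\big)\leq s$. In the low-diffusion regime $2skm<k+m$ this subordination is saturated, so there is no polynomial slack for absorbing the Agmon growth into the semigroup decay, and the contraction must genuinely be produced by the rescaling $t\mapsto t/K$ with $K$ large. This is exactly what forces $c_1$ in the final statement to be a sufficiently small multiple of the Agmon constant rather than an arbitrary positive constant.
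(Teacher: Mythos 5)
Your proposal is correct and follows essentially the same strategy as the paper: expand in the eigenbasis of $H_{k,m}$, choose $\sigma = \min\big(1,\tfrac{2skm}{k+m}\big)$ so that simultaneously $\sigma(1+k/m)=1/\nu_{s,k,m}$, $\sigma(1+m/k)=1/\mu_{s,k,m}$, and $\sigma\big(\tfrac{1}{2k}+\tfrac{1}{2m}\big)\leq s$, apply Theorem~\ref{19022020T1} term by term, absorb the Agmon growth into the semigroup decay (the paper achieves this by estimating the semigroup at the rescaled time $(1+c_2)t$ rather than shrinking the Agmon constants via $t\mapsto t/K$, but the two devices are equivalent), and close the resulting series with the Weyl law from \cite{BBR}. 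Your use of the Cauchy--Schwarz inequality over the eigenfunction index $j$ even yields the slightly better blow-up rate $t^{-n(k+m)/(4skm)}$, whereas the paper simply bounds $\vert\langle g,\psi_j\rangle_{L^2(\mathbb R^n)}\vert\leq\Vert g\Vert_{L^2(\mathbb R^n)}$ and arrives directly at the stated $t^{-n(k+m)/(2skm)}$.
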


Notice that Corollary \ref{02102020C1} and Lemma \ref{02012020L1} also imply that there exists a positive constant $C>0$ such that for all $0<t<T$, $(\alpha,\beta)\in\mathbb N^{2n}$ and $g\in L^2(\mathbb R^n)$,
\begin{equation}\label{20112020E5}
	\big\Vert x^{\alpha}\partial^{\beta}_x(e^{-tH^s_{k,m}}g)\big\Vert_{L^2(\mathbb R^n)}\le\frac{C^{\vert\alpha\vert+\vert\beta\vert}}{t^{\nu_{s,k,m}\vert\alpha\vert+\mu_{s,k,m}\vert\beta\vert+\frac{n(k+m)}{2skm}}}\ (\alpha!)^{\nu_{s,k,m}}\ (\beta!)^{\mu_{s,k,m}}\ \Vert g\Vert_{L^2(\mathbb R^n)}.
\end{equation}
This result highlights the existence of a critical diffusion index $0<s_{cr}\le 1$, given by
\begin{equation}\label{19112020E1}
	s_{cr} = \frac1{2k}+\frac1{2m},
\end{equation}
for which the semigroup generated by the operator $H^s_{k,m}$ enjoys different smoothing properties, depending on whether $s\le s_{cr}$ or $s>s_{cr}$. The existence of this critical index is in fact a consequence of an uncertainty principle. Indeed, when $s\le s_{cr}$, according to \eqref{20112020E5}, the evolution operators generated by the operator $H^s_{k,m}$ enjoy the following smoothing properties
\begin{equation}\label{20112020E6}
	\forall t>0, \forall g\in L^2(\mathbb R^n),\quad e^{-tH^s_{k,m}}g\in S^{1/2sm}_{1/2sk}(\mathbb R^n).
\end{equation}
Moreover, Theorem \ref{20112020T1} stated in Appendix, which can be read as a version of the Heisenberg's uncertainty principle, shows that the Gelfand-Shilov space involved in \eqref{20112020E6} is not reduced to zero provided $1/2sk + 1/2sm\geq1$, that is, $s\le s_{cr}$. As a consequence, when $s>s_{cr}$, the property \eqref{20112020E6} cannot hold anymore and the estimates \eqref{20112020E5} imply that
\begin{equation}\label{20112020E7}
	\forall t>0, \forall g\in L^2(\mathbb R^n),\quad e^{-tH^s_{k,m}}g\in S^{k/(k+m)}_{m/(k+m)}(\mathbb R^n).
\end{equation}
Roughly speaking, the Gelfand-Shilov smoothing properties of the evolution operators generated by the operator $H^s_{k,m}$ ``are stationary from $s = s_{cr}$''.

As established in \cite{MR3996060} (Theorem 1.4) and presented in Subsection \ref{GS} in Appendix (more precisely in \eqref{15102020E1}), the Gelfand-Shilov spaces $S^{\mu}_{\nu}(\mathbb R^n)$, with $\mu/\nu\in\mathbb Q$, can be characterized through the decomposition into the basis of eigenfunctions of anisotropic Shubin operators. By using this property, one could check that the qualitative properties \eqref{20112020E6} and \eqref{20112020E7} hold. However, we absolutely need to use the Agmon quantitative estimates provided by Theorem \ref{19022020T1} to obtain the quantitative Gelfand-Shilov smoothing properties stated in Corollary \ref{02102020C1}, which are requested to prove the null-controllability results stated in the Subsection \ref{Null-Cont}. 

The presence of the term $t^{-n(k+m)/2skm}$ in the right-hand side of the inequalities presented in Corollary \ref{02102020C1} was not expected and we conjecture that estimates of the following form hold
\begin{equation}\label{20112020E8}
	\big\Vert e^{c_1t\langle x\rangle^{\frac1{\nu_{s,k,m}}}}(e^{-tH^s_{k,m}}g)\big\Vert_{L^2(\mathbb R^n)} + \big\Vert e^{c_1t\langle D_x\rangle^{\frac1{\mu_{s,k,m}}}}(e^{-tH^s_{k,m}}g)\big\Vert_{L^2(\mathbb R^n)}\le c_2\Vert g\Vert_{L^2(\mathbb R^n)}.
\end{equation}
In fact, by adapting arguments used in the proof of Theorem \ref{19022020T1}, we can prove this conjecture in the special case $s=1$.

\begin{thm}\label{14102020E1} Let $m,k\geq1$ be positive integers and $H_{k,m}$ be the associated anisotropic Shubin operator defined in \eqref{19022020E1} and equipped with the domain \eqref{25092020E1}.
There exist some positive constants $c_1,c_2>0$ and $0<T<1$ such that for all $0\le t\le T$ and $g\in L^2(\mathbb R^n)$,
$$\big\Vert e^{c_1t\langle x\rangle^{1+\frac km}}(e^{-tH_{k,m}}g)\big\Vert_{L^2(\mathbb R^n)} + \big\Vert e^{c_1t\langle D_x\rangle^{1+\frac mk}}(e^{-tH_{k,m}}g)\big\Vert_{L^2(\mathbb R^n)}\le c_2\Vert g\Vert_{L^2(\mathbb R^n)}.$$
\end{thm}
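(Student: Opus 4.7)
The plan is to adapt the Agmon strategy behind Theorem \ref{19022020T1} directly to the Cauchy problem $\partial_t u + H_{k,m} u = 0$ with $u(0) = g$, instead of expanding along the eigenbasis and applying Theorem \ref{19022020T1} term by term (which, together with the Weyl law, only reproduces the weaker estimate of Corollary \ref{02102020C1} with its blow-up $t^{-n(k+m)/(2km)}$). Setting $u(t) = e^{-tH_{k,m}}g$ and $\phi_t(x) = c_1 t\langle x\rangle^{1+k/m}$, the first step is to differentiate $F(t) := \|e^{\phi_t} u(t)\|_{L^2(\mathbb R^n)}^2$; the selfadjointness of $H_{k,m}$ together with the definition $H_{k,m}^{\phi_t} := e^{\phi_t} H_{k,m} e^{-\phi_t}$ of the conjugated operator gives
$$F'(t) = 2\int_{\mathbb R^n}(\partial_t \phi_t)|v|^2\,dx - 2\operatorname{Re}\langle H_{k,m}^{\phi_t} v, v\rangle_{L^2(\mathbb R^n)}, \qquad v(t) := e^{\phi_t} u(t).$$

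The key input is then a G{\aa}rding-type lower bound for the conjugated operator, of the form
$$\operatorname{Re}\langle H_{k,m}^{\phi_t} v, v\rangle \ge \|(-\Delta)^{m/2} v\|_{L^2}^2 + \||x|^k v\|_{L^2}^2 - C_1\int_{\mathbb R^n}|\nabla \phi_t|^{2m}|v|^2\,dx - C_2\|v\|_{L^2}^2,$$
established by exactly the same Weyl-H\"ormander pseudo-differential argument as in the proof of Theorem \ref{19022020T1}: up to lower-order terms the Weyl symbol of $H_{k,m}^{\phi_t}$ equals $|\xi - i\nabla \phi_t|^{2m} + |x|^{2k}$, whose real part dominates $|\xi|^{2m} + |x|^{2k}$ modulo a remainder of size $|\nabla \phi_t|^{2m}$. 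The exponent $1 + k/m$ in the weight has been chosen precisely so that both error terms are absorbed by $\||x|^k v\|_{L^2}^2$: the elementary inequality $2km \ge k + m$ valid for all integers $k,m\ge 1$ yields $\langle x\rangle^{1+k/m} \le \langle x\rangle^{2k}$, whence
$$\int_{\mathbb R^n}(\partial_t \phi_t)|v|^2\,dx = c_1 \int_{\mathbb R^n}\langle x\rangle^{1+k/m}|v|^2\,dx \le C_3 c_1\big(\||x|^k v\|_{L^2}^2 + \|v\|_{L^2}^2\big),$$
while a direct differentiation of $\phi_t$ gives $|\nabla \phi_t|^{2m} \le C_4 (c_1 t)^{2m}(|x|^{2k} + 1)$.

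Choosing $c_1 > 0$ small enough and then $T > 0$ small enough that both error terms are absorbed into the positive contribution $\||x|^k v\|_{L^2}^2$ produced by the G{\aa}rding lower bound leaves $F'(t) \le C F(t)$, and Gronwall's inequality combined with $F(0) = \|g\|_{L^2(\mathbb R^n)}^2$ delivers the first bound on $[0,T]$. The estimate involving the Fourier multiplier $e^{c_1 t \langle D_x\rangle^{1+m/k}}$ is obtained by duality: under the Fourier transform, $H_{k,m}$ is unitarily equivalent to $H_{m,k}$ acting in the dual variable, since $(-\Delta)^m$ corresponds to multiplication by $|\xi|^{2m}$ and $|x|^{2k}$ to $(-\Delta_\xi)^k$; applying the first bound with the roles of $k$ and $m$ exchanged to $\widehat g$ and invoking Plancherel's theorem then yields the second term. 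The hardest point is the G{\aa}rding lower bound itself, as the anisotropy between the orders $2m$ and $2k$ of the two parts of $H_{k,m}$ forces the argument into the weighted Weyl-H\"ormander calculus developed in Section \ref{secgard}; the borderline case $k = m = 1$, in which $\langle x\rangle^{1+k/m}$ and $\langle x\rangle^{2k}$ coincide, requires $c_1$ to be taken strictly less than the reciprocal of the implicit constant in the G{\aa}rding estimate for the absorption to still succeed.
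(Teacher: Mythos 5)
Your proposal is correct and takes essentially the same route as the paper: both differentiate an exponentially weighted energy functional for the semigroup directly (bypassing the eigenfunction expansion and Weyl law that yield the weaker Corollary~\ref{02102020C1} with its $t^{-n(k+m)/(2skm)}$ loss), invoke the G{\aa}rding-type inequality of Proposition~\ref{19022020P1} with $\sigma=1$ to control the conjugated operator $e^{t\phi}(-\Delta)^m e^{-t\phi}$, absorb the growth of the weight and the G{\aa}rding remainder into the positive potential contribution $\| |x|^k v\|_{L^2}^2$ for $c_1$ and $T$ small, and pass to the Fourier side via $\mathcal F H_{k,m}\mathcal F^{-1}=H_{m,k}$ to obtain the frequency-side weight. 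The one step you gloss over is the well-posedness of $F(t)=\|e^{\phi_t}u(t)\|_{L^2}^2$ and of the formula for $F'(t)$ when the weight $e^{\phi_t}$ is unbounded: the paper resolves this by working with the compactly supported truncations $\phi_\varepsilon=\chi_\varepsilon\circ\phi$ of~\eqref{03022020E1}, appealing to the already-established Corollary~\ref{02102020C1} (which guarantees $e^{-tH_{k,m}}g\in\mathscr S(\mathbb R^n)$ for $t>0$, hence the differentiability of the truncated functional), and then letting $\varepsilon\to 0$ by Fatou's lemma, and your argument needs the same regularization to be rigorous.
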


We also expect that the strategy employed to prove Theorem \ref{14102020E1} can be adapted to obtain the estimates \eqref{20112020E8}. However, this would \textit{a priori} require to obtain a G{\aa}rding type inequality far more technical than one obtained while proving Theorem \ref{19022020T1}. Since the result given by Corollary \ref{02102020C1} will be sufficient to obtain null-controllability results, we will not tackle such a generalization in this work.

\subsection{Null-controllability}\label{Null-Cont} As an application of Corollary \ref{02102020C1}, we therefore study the null-controllability of the evolution equations associated with fractional anisotropic Shubin operators and posed on the whole space $\mathbb R^n$. More precisely, for all positive integers $k,m\geq1$ and all positive real number $s>0$, we consider the equation
\begin{equation}\label{01202020E1}\tag{$E_{s,k,m}$}
\left\{\begin{aligned}
	& \partial_tf(t,x) + H^s_{k,m}f(t,x) = h(t,x)\mathbbm 1_{\omega}(x),\quad t>0,\ x\in\mathbb R^n, \\
	& f(0,\cdot) = f_0\in L^2(\mathbb R^n),
\end{aligned}\right.
\end{equation}
where $H^s_{k,m}$ is the fractional anisotropic Shubin operator defined in \eqref{20112020E3} and equipped with the domain \eqref{20112020E4}, and $\omega\subset\mathbb R^n$ is a Borel set with a positive Lebesgue measure. 

\begin{dfn}\label{24112020D1} The equation \eqref{01202020E1} is said to be null-controllable from the support control $\omega$ in time $T>0$ if, for all initial datum $f_0\in L^2(\mathbb R^n)$, there exists a control $h\in L^2((0,T)\times\omega)$ such that the mild solution of \eqref{01202020E1} satisfies $f(T,\cdot) = 0$. 
\end{dfn}

The precise knowledge of the smoothing properties of the semigroup generated by the fractional anisotropic operator $H^s_{k,m}$ plays a key in the study of the null-controllability of the evolution equation \eqref{01202020E1}. This link, made by the Hilbert Uniqueness Method and the Lebeau-Robbiano strategy, will be presented in details in Section \ref{nullcont}. For now, we only present the null-controllability results contained in this work.

We begin by studying the null-controllability of the evolution equation \eqref{01202020E1} from thick control supports $\omega\subset\mathbb R^n$.

\begin{dfn} A set $\omega\subset\mathbb R^n$ is called $\gamma$-thick at scale $L>0$, with $\gamma\in(0,1]$, if it is measurable and satisfies
$$\forall x\in\mathbb R^n,\quad\Leb(\omega\cap(x + [0,L]^n))\geq\gamma L^n,$$
where $\Leb$ stands for the Lebesgue measure on $\mathbb R^n$. A set $\omega\subset\mathbb R^n$ is then called thick when there exist $\gamma\in(0,1]$ and $L>0$ such that $\omega$ is $\gamma$-thick at scale $L$.
\end{dfn}

First of all, we prove that the evolution equations \eqref{01202020E1} are null-controllable from thick control supports $\omega\subset\mathbb R^n$ in any positive time $T>0$, under an assumption of large diffusion.

\begin{thm}\label{09112020T1} Let $k,m\geq1$ be positive integers and $s>0$  be a positive real number satisfying $2sm>1$. When the control support $\omega\subset\mathbb R^n$ is a thick set, the evolution equation \eqref{01202020E1} is null-controllable from $\omega$ in any positive time $T>0$.
\end{thm}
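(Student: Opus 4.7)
The plan is to argue by duality and to apply the Lebeau-Robbiano method. Since the operator $H^s_{k,m}$ is selfadjoint, the Hilbert Uniqueness Method reduces the null-controllability of \eqref{01202020E1} from $\omega$ in time $T>0$ to the observability inequality
$$\big\Vert e^{-TH^s_{k,m}}g\big\Vert^2_{L^2(\mathbb R^n)}\le C_T\int_0^T\big\Vert e^{-tH^s_{k,m}}g\big\Vert^2_{L^2(\omega)}\,\mathrm dt,\quad g\in L^2(\mathbb R^n).$$
This I would establish along the now standard lines of \cite{BEPS,MPS,AB} by combining a spectral (uncertainty) inequality on low Fourier frequencies with a quantitative dissipation estimate on high Fourier frequencies.

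For the spectral inequality, I would invoke Kovrijkine's theorem for thick sets: there exist constants $C_0,c_0>0$, depending only on the thickness parameters $(\gamma,L)$ of $\omega$, such that for every $N>0$ and every $g\in L^2(\mathbb R^n)$ whose Fourier transform is supported in the ball of radius $N$,
$$\Vert g\Vert_{L^2(\mathbb R^n)}\le C_0 e^{c_0N}\Vert g\Vert_{L^2(\omega)}.$$
For the dissipation estimate, I would apply Corollary \ref{02102020C1}. Writing $\mathbbm 1_{\{\langle D_x\rangle\geq N\}}$ for the Fourier projector onto the frequency region $\langle\xi\rangle\geq N$ and using that this projector commutes with the multiplier $e^{c_1t\langle D_x\rangle^{1/\mu_{s,k,m}}}$, Corollary \ref{02102020C1} yields
$$\big\Vert\mathbbm 1_{\{\langle D_x\rangle\geq N\}}e^{-tH^s_{k,m}}g\big\Vert_{L^2(\mathbb R^n)}\le\frac{c_2}{t^{n(k+m)/(2skm)}}e^{-c_1tN^{1/\mu_{s,k,m}}}\Vert g\Vert_{L^2(\mathbb R^n)},$$
valid for $0<t\le T_0$ sufficiently small.

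The large diffusion assumption $2sm>1$ is decisive at exactly this point. Since $m\geq 1$ automatically forces $k/(k+m)<1$, the bound $1/(2sm)<1$ is precisely what ensures $\mu_{s,k,m}<1$, equivalently $1/\mu_{s,k,m}>1$; this strict inequality makes the dissipation rate $tN^{1/\mu_{s,k,m}}$ asymptotically dominate the exponential cost $c_0N$ appearing in the spectral inequality as $N\to+\infty$. Feeding both ingredients into the abstract Lebeau-Robbiano telescoping argument, in the robust form developed for instance in \cite{MR3732691,BEPS,AB} with a dyadic sequence of frequency cutoffs $N_k$ and nested sub-intervals of $(0,T)$, produces the observability inequality, first for $T$ small and then for arbitrary $T>0$ by the contraction property of the semigroup. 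The main obstacle I anticipate is the careful bookkeeping needed to absorb the polynomial singularity $t^{-n(k+m)/(2skm)}$ on the sub-intervals of the telescoping, but this is harmless since the super-linear exponential decay in $N$ dominates this polynomial blow-up after summation of the dyadic series.
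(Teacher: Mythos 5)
Your proposal matches the paper's proof of Theorem~\ref{09112020T1} in Subsection~\ref{ani} essentially step for step: reduction to observability of the adjoint system via the Hilbert Uniqueness Method, Kovrijkine's spectral inequality (Theorem~\ref{Kovrij}) for the frequency cutoff projections, the dissipation estimate obtained by inserting $e^{-c_1t\vert D_x\vert^{1/\mu_{s,k,m}}}$ and applying Corollary~\ref{02102020C1}, the observation that $2sm>1$ is exactly what yields $1/\mu_{s,k,m}>1$ (i.e.\ the Lebeau--Robbiano parameter $b>a=1$), and finally the abstract telescoping result (here Theorem~\ref{08122017T3} from~\cite{BEPS}), whose statement already absorbs the polynomial prefactor $t^{-n(k+m)/(2skm)}$ so no extra bookkeeping is needed. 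The only cosmetic difference is the shape of the frequency cutoff (ball vs.\ cube).
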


The assumption $2ms>1$ in Theorem \ref{09112020T1} has to be related to the assumption $s>1/2$ mentioned above that ensures the null-controllability of fractional heat equations from thick control supports in any positive time, which formally correspond to the equations (\hyperref[01202020E1]{$E_{s,0,1}$}). However, in opposite to the fractional heat equations, it is still an open and interesting equation to investigate if the equations \eqref{01202020E1} are null-controllable from thick sets in the low diffusion regime $0<2sm\le1$.

Let us mention that we will obtain a more quantitative result than the one stated in Theorem \ref{09112020T1}. Precisely, considering a thick set $\omega\subset\mathbb R^n$, we will prove the following observability estimate: there exists a positive constant $C>1$ such that for all $T>0$ and $g\in L^2(\mathbb R^n)$,
\begin{equation}\label{20112020E1}
	\big\Vert e^{-TH^s_{k,m}}g\big\Vert^2_{L^2(\mathbb R^n)}\le C\exp\bigg(\frac C{T^{\beta_{s,k,m}}}\bigg)\int_0^T\big\Vert e^{-tH^s_{k,m}}g\big\Vert^2_{L^2(\omega)}\,\mathrm dt,
\end{equation}
with
$$\beta_{s,k,m} = \max\bigg(\frac1{2sm-1},\frac km\bigg).$$
The notions of null-controllability and observability are equivalent in our context by the Hilbert Uniqueness Method, as explained in the beginning of Section \ref{nullcont}. Such observability estimates have already been obtained in the works \cite{MR2984079, M, MR2679651} in the particular case where $m = 1$, $k\geq2$ and $s=1$. Precisely, \cite{MR2984079} (Theorem 1.10) states that when $k\geq2$ and $\Gamma\subset\mathbb R^n$ is a cone of the form 
\begin{equation}\label{20112020E2}
	\Gamma = \big\{x\in\mathbb R^n : \vert x\vert>r_0,\ x/\vert x\vert\in\Omega_0\big\},
\end{equation}
where $r_0>0$ and $\Omega_0$ is an open subset of the unit sphere of $\mathbb R^n$, there exists a positive constant $C_0>0$ such that for all $T>0$ and $g\in L^2(\mathbb R^n)$,
$$\big\Vert e^{-TH_{k,1}}g\big\Vert^2_{L^2(\mathbb R^n)}\le\exp\bigg(\frac{C_0}{T^{1+\frac2{k-1}}}\bigg)\int_0^T\big\Vert e^{-tH_{k,1}}g\big\Vert^2_{L^2(\Gamma)}\,\mathrm dt.$$
Since the notions of null-controllability and observability are equivalent for the equations we are dealing with, as already mentioned, the above estimate implies that when $k\geq2$, the equation (\hyperref[01202020E1]{$E_{1,k,1}$}) is null-controllable in any positive times $T>0$ from any cone of the form \eqref{20112020E2}. Notice that under the same setting, for comparison, the observability estimate \eqref{20112020E1} writes
$$\big\Vert e^{-TH_{k,1}}g\big\Vert^2_{L^2(\mathbb R^n)}\le C\exp\bigg(\frac C{T^k}\bigg)\int_0^T\big\Vert e^{-tH_{k,1}}g\big\Vert^2_{L^2(\omega)}\,\mathrm dt,$$
and also that $1+2/(k-1)\le k$ provided $k\geq3$. We will come back to the result \cite{MR2984079} (Theorem 1.10) a little further.

As it will appear in Subsection \ref{ani}, the proof of Theorem \ref{09112020T1} only uses the Gevrey smoothing properties given by Corollary \ref{02102020C1}, that is, the estimate 
$$\big\Vert e^{c_1t\vert D_x\vert^{\frac1{\mu_{s,k,m}}}}(e^{-tH^s_{k,m}}g)\big\Vert_{L^2(\mathbb R^n)}\le\frac{c_2}{t^{\frac{n(k+m)}{2skm}}}\Vert g\Vert_{L^2(\mathbb R^n)},$$
and not the full Gelfand-Shilov smoothing properties provided by the same result. In the general case where $k,m\geq1$ are arbitrary, it is an open and very interesting problem to know how obtaining positive null-controllability results for the equation \eqref{01202020E1} by also using the exponentiel decay properties of the semigroup generated by the operator $H^s_{k,m}$. However, the result given by Theorem \ref{09112020T1} can be extended in the isotropic case where $k=m$, by considering a more general class of control supports, those which are thick with respect to densities, introduced in the work \cite{MPS}.

\begin{dfn} Let $\omega\subset\mathbb R^n$ and $\rho:\mathbb R^n\rightarrow(0,+\infty)$ be a positive continuous function. The set $\omega$ is said to be thick with respect to the density $\rho$ if it is measurable and satisfies
$$\exists\gamma\in(0,1], \forall x\in\mathbb R^n,\quad \Leb(\omega\cap B(x,\rho(x)))\geq\gamma\Leb(B(x,\rho(x))),$$
where $\Leb$ stands for the Lebesgue measure on $\mathbb R^n$.
\end{dfn}

The authors of the same paper obtained a positive null-controllability result from control supports which are thick with respect to Lipschitz densities, for evolution equations enjoying smoothing properties in symmetric Gelfand-Shilov spaces $S^{1/2\gamma}_{1/2\gamma}(\mathbb R^n)$, with $1/2\le\gamma\le1$. This is Theorem 2.5 in \cite{MPS}, stated in Theorem \ref{12112020T2} in the present work. An example of such an equation is given by the equation (\hyperref[01202020E1]{$E_{s,l,l}$}) with $l\geq1$ according to Corollary \ref{02102020C1}, with $\gamma = \min(sl,1)$. As a consequence, we obtain the following

\begin{thm}\label{12112020T1} Let $l\geq1$ be a positive integer and $s>0$ be a positive real number satisfying $1/2<\min(sl,1)\le 1$. We consider a $1/2$-Lipschitz function $\rho:\mathbb R^n\rightarrow(0,+\infty)$, the space $\mathbb R^n$ being equipped with the canonical Euclidean norm, satisfying that there exist some positive constants $0\le\delta<\min(2sl-1,1)$ and $c,R>0$ such that 
$$\forall x\in\mathbb R^n,\quad c\le\rho(x)\le R\langle x\rangle^{\delta}.$$
When $\omega\subset\mathbb R^n$ is a thick set with respect to the density $\rho$, the evolution equation $\mathrm($\hyperref[01202020E1]{$E_{s,l,l}$}$\mathrm)$ is null-controllable from the control support $\omega$ in any positive time $T>0$.
\end{thm}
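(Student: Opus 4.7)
The plan is to reduce Theorem~\ref{12112020T1} to a direct application of Theorem~\ref{12112020T2} (i.e., Theorem~2.5 of \cite{MPS}), feeding as input the quantitative Gelfand--Shilov smoothing estimates provided by Corollary~\ref{02102020C1} specialized to the isotropic case $k=m=l$.

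First I would compute the relevant regularity exponents. Setting $\gamma=\min(sl,1)\in(\tfrac12,1]$, the anisotropy disappears and
$$\nu_{s,l,l} = \mu_{s,l,l} = \max\Big(\frac1{2sl},\frac12\Big) = \frac1{2\gamma},$$
while the singular factor on the right-hand side of Corollary~\ref{02102020C1} reduces to $t^{-n/(sl)}$. Consequently, there exist constants $c_1,c_2>0$ and $0<T_0<1$ such that for all $0<t<T_0$ and $g\in L^2(\mathbb R^n)$,
$$\bigl\Vert e^{c_1 t\langle x\rangle^{2\gamma}}(e^{-tH^s_{l,l}}g)\bigr\Vert_{L^2(\mathbb R^n)} + \bigl\Vert e^{c_1 t\langle D_x\rangle^{2\gamma}}(e^{-tH^s_{l,l}}g)\bigr\Vert_{L^2(\mathbb R^n)} \le \frac{c_2}{t^{n/(sl)}}\Vert g\Vert_{L^2(\mathbb R^n)}.$$
This is exactly the quantitative smoothing into the symmetric Gelfand--Shilov space $S^{1/(2\gamma)}_{1/(2\gamma)}(\mathbb R^n)$ used as hypothesis in \cite{MPS}: polynomial blow-up at $t=0^+$, together with weighted exponential gains of order $2\gamma$ in both the physical and the frequency variable.

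It then remains to verify that the assumptions on the density $\rho$ match those of Theorem~\ref{12112020T2}. The $\tfrac12$-Lipschitz regularity, the positive lower bound $\rho\ge c>0$, and the growth condition $\rho(x)\le R\langle x\rangle^{\delta}$ with
$$0\le \delta<\min(2sl-1,1)=\min(2\gamma-1,1)$$
are precisely what is required; the upper bound on $\delta$ reflects the admissible density growth compatible with the exponent $2\gamma$ of the exponential weight. Plugging these ingredients into Theorem~\ref{12112020T2} produces the observability estimate for the semigroup $(e^{-tH^s_{l,l}})_{t\ge0}$ tested against $\mathbbm 1_{\omega}$, and the Hilbert uniqueness method, recalled at the beginning of Section~\ref{nullcont}, converts this observability into the null-controllability of~(\hyperref[01202020E1]{$E_{s,l,l}$}) from $\omega$ in arbitrary positive time $T>0$.

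The proof is therefore essentially a bookkeeping exercise on exponents. The only point that demands care is checking that the constraint $\delta<\min(2sl-1,1)$ in the statement translates exactly into the density condition imposed by \cite{MPS} for a symmetric Gelfand--Shilov index $\gamma$, namely $\delta<2\gamma-1$ together with $\delta<1$ (the latter coming from the $\tfrac12$-Lipschitz control on $\rho$). All the analytic content is already packaged in Corollary~\ref{02102020C1}; no additional ingredient is needed.
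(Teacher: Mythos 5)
Your argument is essentially identical to the paper's: reduce to Theorem~\ref{12112020T2} (Theorem~2.5 in \cite{MPS}) with $\gamma=\min(sl,1)$, using the isotropic specialization of Corollary~\ref{02102020C1}, and the density condition $\delta<\min(2sl-1,1)=2\gamma-1$ matches the hypothesis of that theorem. The one step you elide is that Theorem~\ref{12112020T2} requires the factorial-seminorm estimate \eqref{12112020E3}, not the exponential-weight estimate of Corollary~\ref{02102020C1}; the paper passes between the two via Lemma~\ref{02012020L1} (recorded as \eqref{20112020E5}), and you should cite this conversion explicitly before invoking Theorem~\ref{12112020T2}.
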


This result has already be proven in \cite{MPS} (Corollary 2.6) in the case where $l=1$ and $1/2<s\le1$. Moreover, since a thick subset of $\mathbb R^n$ is thick with respect to a constant density, Theorem \ref{12112020T1} is the exact generalization of Theorem \ref{09112020T1} when $k=m=l$.

Since the following inclusion holds
$$S^{\mu_{s,k,m}}_{\nu_{s,k,m}}(\mathbb R^n)\subset S^{\max(\mu_{s,k,m},\nu_{s,k,m})}_{\max(\mu_{s,k,m},\nu_{s,k,m})}(\mathbb R^n),$$
we get that generically, the semigroups generated by fractional anisotropic Shubin operators $H^s_{k,m}$ enjoy smoothing properties in symmetric Gelfand-Shilov spaces from Corollary \ref{02102020C1}. Therefore, by still using Theorem \ref{12112020T2}, one could state a result for any positive integers $k,m\geq1$, more general than Theorem \ref{12112020T1}. However, such a theorem would not be an extension of Theorem \ref{09112020T1} since it would require an assumption like $2s\min(k,m)>1$, instead of $2ms>1$. That is the reason why such a general null-controllability result is not stated in the present work. This forced symmetrization seems not to be the good approach to tackle null-controllability issues for the equation \eqref{01202020E1} in a general setting.

About that, it would be very interesting to unify Theorem \ref{09112020T1} and Theorem \ref{12112020T1}, that is, extending Theorem \ref{12112020T1} in the case where the positive integers $k,m\geq1$ can be different. We have already mentioned \cite{MR2984079} (Theorem 1.10) which states that when $k\geq2$, the equation (\hyperref[01202020E1]{$E_{k,1,1}$}) is null-controllable (since observable) in any positive time $T>0$ from cones of the form \eqref{20112020E2}. Yet, it follows from a straightforward computation that those cones are sets which are thick with respect to the densities $R\langle x\rangle$ with $R>1$. Theorem 1.10 in \cite{MR2984079} therefore turns out to be a first generalization of Theorem \ref{12112020T1} in this particular non-symmetric case.

Directly extending \cite{MPS} (Theorem 2.5), of which Theorem \ref{12112020T1} is an application (by using Corollary \ref{02102020C1}), for evolution equations enjoying smoothing properties in general Gelfand-Shilov spaces seems quite difficult. Indeed, the proof of this result is based on characterization of the symmetric Gelfand-Shilov spaces $S^{\mu}_{\mu}(\mathbb R^n)$ through the decomposition into the Hermite basis of $L^2(\mathbb R^n)$, and requires to use Bernstein type estimates for the Hermite functions, see \cite{MPS} (Section 3 and Theorem 5.2). As we have already mentioned, concerning the possibly non-symmetric Gelfand-Shilov spaces $S^{\mu}_{\nu}(\mathbb R^n)$ with $\mu/\nu\in\mathbb Q$, a similar characterization exists through the decomposition into the basis of eigenfunctions of the anisotropic Shubin operators $H_{k,m}$, as explained in Subsection \ref{GS} in the Appendix. However, Bernstein type estimates for the eigenfunctions of the operator $H_{k,m}$ have not been established in general yet, and seem to be more difficult to obtain than for the Hermite functions, for which we have an explicit formula. That is why the proof of \cite{MPS} (Theorem 2.5) therefore cannot be directly adapted. Nevertheless, we expect that the estimates given by Corollary \ref{02102020C1} will allow to obtain a generalization of Theorem \ref{12112020T1} when $k\geq 1$ and $m\geq1$ may be different.

Let us now consider the situation where $l=1$ and $s>1$. Theorem \ref{12112020T1} states in this case that the equation (\hyperref[01202020E1]{$E_{s,1,1}$}) is null-controllable in any positive time $T>0$ from control supports which are thick with respect to sublinear densities. It fact this result can be considerable sharpen, since we can prove that the equation (\hyperref[01202020E1]{$E_{s,1,1}$}) is null-controllable in any positive time $T>0$ from any measurable control support with positive Lebesgue measure.

\begin{thm}\label{09112020P1} Let $s>1$ be a positive real number. When $\omega\subset\mathbb R^n$ is any measurable set with positive Lebesgue measure, the evolution equation $\mathrm($\hyperref[01202020E1]{$E_{s,1,1}$}$\mathrm)$ is null-controllable from the control support $\omega$ in any positive time $T>0$.
\end{thm}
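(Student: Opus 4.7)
The plan is to deduce the theorem from Corollary \ref{02102020C1} by reducing null-controllability to observability via the Hilbert Uniqueness Method and then exploiting the especially strong smoothing properties available in the isotropic case $k=m=1$ with $s>1$. Indeed, for those parameters one has
$$\mu_{s,1,1} = \nu_{s,1,1} = \max\Bigl(\frac{1}{2s},\frac{1}{2}\Bigr) = \frac{1}{2},$$
so Corollary \ref{02102020C1} provides, for $0<t<T$,
$$\bigl\Vert e^{c_1 t\langle x\rangle^2}(e^{-tH^s_{1,1}}g)\bigr\Vert_{L^2(\mathbb R^n)} + \bigl\Vert e^{c_1 t\langle D_x\rangle^2}(e^{-tH^s_{1,1}}g)\bigr\Vert_{L^2(\mathbb R^n)} \le \frac{c_2}{t^{n/s}}\Vert g\Vert_{L^2(\mathbb R^n)}.$$
Thus the semigroup regularizes $L^2(\mathbb R^n)$ into the \emph{symmetric} Gelfand-Shilov space $S^{1/2}_{1/2}(\mathbb R^n)$, with explicit Gaussian decay both on the function and on its Fourier transform.

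The key analytic step is a Nazarov/Beurling-type uncertainty principle: any $u\in L^2(\mathbb R^n)$ such that $e^{c\langle x\rangle^2}u \in L^2$ and $e^{c\langle D_x\rangle^2}u\in L^2$ extends to an entire function on $\mathbb C^n$ whose growth is controlled by the Gaussian parameters, and for such functions one has the reverse inequality
$$\Vert u\Vert_{L^2(\mathbb R^n)} \le K(c,\omega)\,\Vert u\Vert_{L^2(\omega)}$$
on every measurable set $\omega$ of positive Lebesgue measure, with an explicit dependence of $K$ on $c$ and on the concentration of $\omega$. This type of result is exactly tailored to the symmetric Gelfand-Shilov regularity produced by our semigroup, and is available in the literature (Kovrijkine--Nazarov--Jaming circle, as used in \cite{MPS,MPS2} for Hermite-type expansions).

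With this principle in hand, I would apply it at a fixed intermediate time, say $t=T/2$, to the function $u_{T/2} := e^{-(T/2)H^s_{1,1}}g$: the bound from Corollary \ref{02102020C1} gives admissible constants $c = c_1 T/2$ and implies
$$\Vert u_{T/2}\Vert_{L^2(\mathbb R^n)} \le K\bigl(c_1 T/2,\omega\bigr)\,\Vert u_{T/2}\Vert_{L^2(\omega)}.$$
Using the contraction of the semigroup on $L^2$ one then bounds $\Vert e^{-TH^s_{1,1}}g\Vert_{L^2(\mathbb R^n)}^2$ by $\Vert u_{T/2}\Vert_{L^2(\mathbb R^n)}^2$, and reconstructs the integrated observability inequality
$$\bigl\Vert e^{-TH^s_{1,1}}g\bigr\Vert_{L^2(\mathbb R^n)}^2 \le C_T \int_0^T \bigl\Vert e^{-tH^s_{1,1}}g\bigr\Vert_{L^2(\omega)}^2\,\mathrm dt$$
by averaging the pointwise inequality over a subinterval of $(0,T)$ and again using the contractivity of the semigroup between the various times. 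By the Hilbert Uniqueness Method, recalled at the beginning of Section \ref{nullcont}, this observability estimate is equivalent to the null-controllability of $(E_{s,1,1})$ from $\omega$ in time $T$.

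The main obstacle is the Nazarov-type step: one needs the quantitative reverse estimate with a constant depending only on the measure/concentration of $\omega$ and the Gaussian parameter $c$, and not on any additional structure of $\omega$ (no thickness, no density). The ingredient that makes this possible is precisely the simultaneous Gaussian decay of $u$ and $\hat u$ furnished by the Gelfand-Shilov smoothing of Corollary \ref{02102020C1}, which is unavailable in the non-symmetric setting $k\neq m$ or in the low-diffusion regime $s\le 1$. Once this analytic inequality is established, the remainder of the argument is a routine combination of semigroup contractivity, time-averaging, and HUM, with no further difficulty specific to the fractional calculus.
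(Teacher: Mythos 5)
Your approach is genuinely different from the paper's, and unfortunately it has a gap at its central analytic step. The paper's proof does \emph{not} use Corollary~\ref{02102020C1} at all: it works directly with the Hermite projections $p_k$ of \eqref{12112020E5}, the dissipation estimate \eqref{12112020E4} coming from the explicit Hermite eigenvalues $2|\alpha|+n$, and the spectral inequality of \cite{BJKPS} (as extended to Borel sets in \cite{HWW}), fed into the Lebeau--Robbiano machinery of Theorem~\ref{08122017T3} with exponents $a=s'$ and $b=s$ for some $1<s'<s$.

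The gap in your proposal is the ``Nazarov/Beurling-type'' reverse estimate $\Vert u\Vert_{L^2(\mathbb R^n)}\le K(c,\omega)\Vert u\Vert_{L^2(\omega)}$, claimed uniformly over all $u$ with simultaneous Gaussian decay $e^{c\langle x\rangle^2}u,\ e^{c\langle D_x\rangle^2}u\in L^2$ and over arbitrary measurable $\omega$ of positive measure. No such uniform estimate is available, and in fact it is essentially incompatible with the sharp constants in the Hermite spectral inequality. Indeed, $u\in S^{1/2}_{1/2}(\mathbb R^n)$ is equivalent, via the characterization \eqref{15102020E1}, to Hermite coefficients decaying like $e^{-\varepsilon|\alpha|}$, i.e.\ $\Vert(1-p_k)u\Vert_{L^2}\lesssim e^{-\varepsilon k}$; but the spectral inequality costs $Ce^{\frac12 k\log(k+1)+Ck}$. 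Since $\frac12 k\log k$ dominates $\varepsilon k$, the naive splitting $u=p_ku+(1-p_ku)$ produces an error term $e^{\frac12 k\log k+Ck}\cdot e^{-\varepsilon k}\to+\infty$ for every choice of $k$, so no uniform $K(c,\omega)$ can be extracted from this class. The mere Gaussian ($S^{1/2}_{1/2}$) smoothing of Corollary~\ref{02102020C1} is therefore too weak to yield the claimed observability from arbitrary measurable sets. What actually saves the day is the hypothesis $s>1$: it produces Hermite coefficient decay $e^{-t(2|\alpha|+n)^{s}}$, which is \emph{super}-exponential and does beat $e^{\frac12 k\log k}$. But this information is lost if one first passes to the Gelfand--Shilov exponents in Corollary~\ref{02102020C1} (which saturate at $1/2$ for all $s\geq1$); it is only visible in the spectral dissipation estimate \eqref{12112020E4}, which is precisely what the paper's proof uses in the Lebeau--Robbiano iteration.
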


The statement and the proof of Theorem \ref{09112020P1} have been kindly communicated to the author by J. Martin. It is still an open question to know if Theorem \ref{09112020P1} still holds for any positive integer $l\geq1$ and any $sl>1$, and even more generally for all $k,m\geq1$ and $s>s_{cr}$, the critical exponent $s_{cr}$ being defined in \eqref{19112020E1}.

\section{Null-controllability of evolution equations associated with fractional anisotropic Shubin operators}\label{nullcont}

In this section, we explain how the quantitative smoothing properties given by Corollary \eqref{02102020C1} (which will be proven later in this work) and the Lebeau-Robbiano strategy allow to obtain the null-controllability results Theorem \ref{09112020T1} and Theorem \ref{12112020T1} presented in Subsection \ref{Null-Cont} and concerning the evolution equation
\begin{equation}\label{12112020E1}\tag{$E_{s,k,m}$}
\left\{\begin{aligned}
	& \partial_tf(t,x) + H^s_{k,m}f(t,x) = h(t,x)\mathbbm 1_{\omega}(x),\quad t>0,\ x\in\mathbb R^n, \\
	& f(0,\cdot) = f_0\in L^2(\mathbb R^n),
\end{aligned}\right.
\end{equation}
where $k,m\geq1$ are positive integers, $s>0$ is a positive real number and $H^s_{k,m}$ is the associated fractional anisotropic Shubin operator defined in \eqref{20112020E3} and equipped with the domain \eqref{20112020E4}. We will also present a proof of Theorem \ref{09112020P1} communicated to the author by J. Martin, which does not require any result proven in the present paper. Since the operator $H^s_{k,m}$ is selfadjoint, the Hilbert Uniqueness Method, see e.g. \cite{MR2302744} (Theorem 2.44), shows that the null-controllability of the equation \eqref{12112020E1} is equivalent to the observability of the adjoint system
\begin{equation}\label{24052018E2}\tag{${E_{s,k,m}}^*$}
\left\{\begin{array}{l}
	\partial_tg(t,x) + H_{k,m}^sg(t,x) = 0,\quad t>0,\ x\in\mathbb R^n, \\[5pt]
	g(0,\cdot) = g_0\in L^2(\mathbb R^n).
\end{array}\right.
\end{equation}

\begin{dfn} Given a positive time $T>0$ and a Borel set $\omega\subset\mathbb R^n$ with a positive Lebesgue measure, the equation \eqref{24052018E2} is said to be \textit{observable} from $\omega$ in time $T$ if there exists a positive constant $C(T,\omega)>0$ such that for all $g\in L^2(\mathbb R^n)$,
\begin{equation}\label{12112020E2}
	\big\Vert e^{-TH^s_{k,m}}g\big\Vert^2_{L^2(\mathbb R^n)}\le C(T,\omega)\int_0^T\big\Vert g(t,\cdot)\big\Vert^2_{L^2(\omega)}\,\mathrm dt.
\end{equation}
\end{dfn}

A classical method in observability theory is the Lebeau-Robbiano strategy, introduced in the work \cite{MR1312710} in order to study the null-controllability of the heat equation posed on bounded domains of $\mathbb R^n$. Essentially, this approach states that to obtain observability estimates like \eqref{12112020E2}, it sufficient to obtain a spectral inequality for the control support $\omega$ and a dissipation estimate for the semigroup solution of the equation \eqref{24052018E2}. In this work, we will use a recently revised version of this method (directly or through results also proven by the following result), due to K. Beauchard M. Egidi and K. Pravda-Starov in \cite{BEPS}, which is essentially a reformulation of a previous result due to L. Miller \cite{M} (involving a telescopic series), following the seminal ideas in \cite{MR1312710}.

\begin{thm}[Theorem 2.1 in \cite{BEPS}]\label{08122017T3} Let $\Omega\subset\mathbb R^n$ be an open set, $\omega\subset\Omega$ be a measurable subset, $(\pi_k)_{k\geq1}$ be a family of orthogonal projections defined on $L^2(\Omega)$ and $(e^{-tA})_{t\geq0}$ be a strongly continuous contraction semigroup on $L^2(\Omega)$. Assume that there exist some constants $c_1, c'_1, c_2, c'_2, a,b,t_0,m_1>0$ and $m_2\geq0$, with $a<b$, such that the following spectral inequality 
\begin{equation}\label{11112020E2}
	\forall g\in L^2(\mathbb R^n), \forall k\geq1,\quad \Vert\pi_kg\Vert_{L^2(\Omega)}\le c'_1e^{c_1k^a}\Vert\pi_kg\Vert_{L^2(\omega)},
\end{equation}
and the following dissipation estimate
\begin{equation}\label{11112020E3}
	\forall g\in L^2(\mathbb R^n), \forall k\geq1, \forall t\in(0,t_0),\quad \big\Vert(1-\pi_k)(e^{-tA}g)\big\Vert_{L^2(\Omega)}\le\frac{e^{-c_2t^{m_1}k^b}}{c'_2t^{m_2}}\Vert g\Vert_{L^2(\Omega)},
\end{equation}
hold. Then, there exists a positive constant $C>1$ such that the following observability estimate holds
$$\forall T>0, \forall g\in L^2(\mathbb R^n),\quad \big\Vert e^{-TA}g\big\Vert^2_{L^2(\mathbb R^n)}\le C\exp\bigg(\frac C{T^{\frac{am_1}{b-a}}}\bigg)\int_0^T\big\Vert e^{-tA}g\big\Vert^2_{L^2(\omega)}\,\mathrm dt.$$
\end{thm}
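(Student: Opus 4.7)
The plan is to run a Lebeau--Robbiano telescoping argument in the revised form of \cite{BEPS}, splitting the trajectory at each step into a low-frequency piece $\pi_k e^{-tA}g$ controlled by the observation on $\omega$ through \eqref{11112020E2}, and a high-frequency piece $(I-\pi_k)e^{-tA}g$ killed by \eqref{11112020E3}. Iterating this dichotomy on a dyadic subdivision of $(0,T)$ with a geometrically growing sequence of spectral cut-offs $k_j$ tuned to $T$ then reveals a telescoping structure whose final constant has the claimed form $C\exp(CT^{-am_1/(b-a)})$.

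Concretely, I set $E(\tau):=\Vert e^{-\tau A}g\Vert_{L^2(\Omega)}^2$, which is nonincreasing since $(e^{-tA})_{t\ge 0}$ is a contraction, and first derive the core inequality
$$E(\tau_2)\le 2(c'_1)^2 e^{2c_1 k^a}\big\Vert e^{-\tau_2 A}g\big\Vert_{L^2(\omega)}^2 + \frac{1+2(c'_1)^2 e^{2c_1 k^a}}{(c'_2)^2(\tau_2-\tau_1)^{2m_2}}e^{-2c_2(\tau_2-\tau_1)^{m_1}k^b}E(\tau_1),$$
valid for $0\le\tau_1<\tau_2$ and $k\ge 1$, obtained by inserting $\pi_k=I-(I-\pi_k)$ in \eqref{11112020E2}, using Pythagoras on $L^2(\Omega)$, and bounding the high-frequency part via \eqref{11112020E3} applied to $e^{-\tau_1 A}g$ on the interval of length $\tau_2-\tau_1$. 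I then introduce the dyadic grid $T_j := T(1-2^{-j})$, with $\Delta_j := T_{j+1}-T_j = T\,2^{-j-1}$, integrate the core inequality over $\tau_2\in(T_j+\Delta_j/2,\,T_{j+1})$ with $\tau_1 = T_j$, and use monotonicity of $E$ to deduce a recursion of the form $E(T_{j+1})\le \alpha_j\int_{T_j}^{T_{j+1}}\Vert e^{-tA}g\Vert_{L^2(\omega)}^2\,dt + B_j E(T_j)$.

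The critical choice is then $k_j := \lceil K\cdot 2^{jm_1/(b-a)}\rceil$ with $K = C_\star T^{-m_1/(b-a)}$ and $C_\star$ large enough that $c_2 T^{m_1}2^{-2m_1}K^{b-a}$ dominates $c_1\cdot 2^{am_1/(b-a)}$. The scaling is rigged so that both $(\Delta_j/2)^{m_1}k_j^b$ and $k_j^a$ are of order $2^{am_1 j/(b-a)}$, and with the above choice of $K$ the exponential gain in $B_j$ strictly dominates the exponential loss $e^{2c_1 k_j^a}$ hidden in $\alpha_j$ by a factor $e^{-\eta\,2^{am_1 j/(b-a)}}$ for some $\eta>0$. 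Iterating from $j=0$, using $E(T_j)\to E(T)$ by strong continuity, and summing the resulting series leads to
$$E(T)\le\Bigl(\sum_{j\ge 0}\alpha_j\prod_{i<j}B_i\Bigr)\int_0^T\big\Vert e^{-tA}g\big\Vert_{L^2(\omega)}^2\,dt,$$
and a geometric-series argument shows that this telescoping sum is controlled up to a universal multiplicative constant by the $j=0$ contribution $\alpha_0\sim T^{-1}e^{2c_1 K^a}$, which yields the claimed exponent since $K^a\asymp T^{-am_1/(b-a)}$.

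The main obstacle will be the fine calibration of $C_\star$ and of the geometric ratio: one must simultaneously ensure that (i) the dissipation gain $e^{-c_2(\Delta_j)^{m_1}k_j^b}$ strictly outpaces the spectral loss $e^{2c_1 k_j^a}$ uniformly in $j$, which is precisely where the strict inequality $a<b$ is used; (ii) the polynomial factor $(\Delta_j)^{-2m_2}$ is harmlessly absorbed into the exponential gain, which uses $m_2\ge 0$; and (iii) the full series $\sum_j\alpha_j\prod_{i<j}B_i$ remains comparable to its $j=0$ term, so that the resulting observability constant takes the sharp form $C\exp(CT^{-am_1/(b-a)})$ rather than a weaker bound with a larger exponent.
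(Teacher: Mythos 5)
The result you are proving is not established inside this paper: it is quoted verbatim as Theorem 2.1 of \cite{BEPS}, and the surrounding text only says that \cite{BEPS} reformulates L.~Miller's telescoping-series version of the Lebeau--Robbiano method. There is therefore no in-paper proof to compare you against; what follows is an assessment of your reconstruction on its own terms.

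Your outline is the right argument and essentially the one in \cite{BEPS}. The core inequality you derive from Pythagoras plus the spectral and dissipation hypotheses is correct, the dyadic grid $T_j=T(1-2^{-j})$ and the calibration $k_j\approx K\,2^{jm_1/(b-a)}$ with $K\asymp T^{-m_1/(b-a)}$ are exactly the right scaling (both $(\Delta_j)^{m_1}k_j^b$ and $k_j^a$ scale as $2^{jam_1/(b-a)}$), and the final constant $\exp(CT^{-am_1/(b-a)})$ does come out of $e^{2c_1K^a}$. Two bookkeeping points need fixing, though. First, iterating $E(T_{j+1})\le\alpha_j I_j + B_j E(T_j)$ forward gives
$E(T_{J+1})\le\sum_{j=0}^{J}\bigl(\prod_{i=j+1}^{J}B_i\bigr)\alpha_j I_j+\bigl(\prod_{i=0}^{J}B_i\bigr)E(0)$,
so the coefficient of $\alpha_j I_j$ is $\prod_{i>j}B_i$, not $\prod_{i<j}B_i$ as you wrote, and there is a residual term $\bigl(\prod_{i=0}^{J}B_i\bigr)\Vert g\Vert^2$ that your displayed inequality silently drops; you must observe that this residual tends to zero as $J\to\infty$ because $B_i\to 0$ super-geometrically once $C_\star$ is large enough. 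Second, the dissipation estimate \eqref{11112020E3} is only hypothesized for $t\in(0,t_0)$, so one must either first reduce to $T\le t_0$ (then extend to all $T>0$ by monotonicity of $t\mapsto\Vert e^{-tA}g\Vert^2$ and of the integral) or ensure $\Delta_j/2<t_0$ for all $j$; this is standard but worth stating. With those two corrections, and with the observation that the series $\sum_j\alpha_j\prod_{i>j}B_i$ is dominated (up to a universal factor) by the $j=0$ term precisely because the gain $\prod_{i>j}B_i$ kills the growth of $\alpha_j$, your proposal is a sound proof of the theorem.
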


Notice that the spectral inequality \eqref{11112020E2} is an intrinsic geometric property of the control support $\omega$, while the dissipation estimate \eqref{11112020E3} only depends on the semigroup generated by the operator $A$. In our context, the latter will be a consequence of Corollary \ref{02102020C1}. Moreover, we will use spectral inequalities already existing in the literature for frequency cutoff projections and projections over the first modes of the Hermite basis of $L^2(\mathbb R^n)$.

\subsection{Anisotropic case}\label{ani} Let us begin by proving Theorem \ref{09112020T1}. Assume that $2sm>1$ and that the control support $\omega$ is a thick set. We consider the sequence $(\pi_k)_{k\geq1}$ of orthogonal frequency cutoff projections defined by
\begin{equation}\label{30112020E1}
	\pi_k : L^2(\mathbb R^n) \rightarrow\big\{g\in L^2(\mathbb R^n) : \Supp\widehat g\subset[-k,k]^n\big\},
\end{equation}
where $\widehat g\in L^2(\mathbb R^n)$ denotes the Fourier transform of the function $g\in L^2(\mathbb R^n)$. In order to apply Theorem \ref{08122017T3}, we need to use a spectral estimate of form \eqref{11112020E2} and to establish a dissipation estimate like \eqref{11112020E3}. Let us start with the latter. We deduce from Corollary \ref{02102020C1} that there exist some positive constants $c_1,c_2>0$ and $0<T<1$ such that for all $0<t<T$ and $g\in L^2(\mathbb R^n)$,
$$\big\Vert e^{c_1t\vert D_x\vert^{\frac1{\mu_{s,k,m}}}}(e^{-tH^s_{k,m}}g)\big\Vert_{L^2(\mathbb R^n)}\le\frac{c_2}{t^{\frac{n(k+m)}{2skm}}}\Vert g\Vert_{L^2(\mathbb R^n)},$$
the exponent $\mu_{s,k,m}>0$ being given by
$$\mu_{s,k,m} = \max\bigg(\frac1{2sm},\frac k{k+m}\bigg).$$
It  therefore follows from the definition \eqref{30112020E1} of the cutoff projections $\pi_k$ and Plancherel's theorem that for all $k\geq1$, $0<t<T$ and $g\in L^2(\mathbb R^n)$,
\begin{align}\label{11112020E4}
	\big\Vert(1-\pi_k)e^{-tH^s_{k,m}}g\big\Vert_{L^2(\mathbb R^n)} 
	& = \big\Vert(1-\pi_k) e^{-c_1t\vert D_x\vert^{\frac1{\mu_{s,k,m}}}} e^{c_1t\vert D_x\vert^{\frac1{\mu_{s,k,m}}}}(e^{-tH^s_{k,m}}g)\big\Vert_{L^2(\mathbb R^n)} \nonumber \\[5pt]
	& \le\frac{c_2}{t^{\frac{n(k+m)}{2skm}}} e^{-c_1tk^{\frac1{\mu_{s,k,m}}}}\Vert g\Vert_{L^2(\mathbb R^n)}. \nonumber
\end{align}
Notice that this is a dissipation estimate of the form \eqref{11112020E3} with $m_1 = 1$, $m_2 = \frac{n(k+m)}{2skm}$ and $b = 1/\mu_{s,k,m}$. On the other hand, concerning the spectral estimate, we use O. Kovrijkine's following result which is taken from the work \cite{MR1840110}:

\begin{thm}[Theorem 3 in \cite{MR1840110}]\label{Kovrij} There exists a universal positive constant $C_n>0$ depending only on the dimension $n\geq 1$ such that for all set $\omega\subset\mathbb R^n$ being $\gamma$-thick at scale $L>0$,
$$\forall k\geq1, \forall g\in L^2(\mathbb R^n),\quad\Vert \pi_kg\Vert_{L^2(\mathbb R^n)}\le\Big(\frac{C_n}{\gamma}\Big)^{C_n(1+Lk)}\Vert\pi_kg\Vert_{L^2(\omega)},$$
the orthogonal frequency cutoff projections $\pi_k$ being defined in \eqref{30112020E1}.
\end{thm}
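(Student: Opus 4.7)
My plan is to follow Kovrijkine's original argument, which extends the one-dimensional Logvinenko--Sereda theorem to arbitrary dimension by combining Paley--Wiener/Bernstein estimates with a Remez-type inequality for real-analytic functions of controlled exponential type. The strategy reduces to a local estimate on each cube of the covering furnished by thickness, and then proves that local estimate uniformly by exploiting the real-analyticity of band-limited functions. Concretely, I would cover $\mathbb R^n$ by the disjoint translates $I_j = x_j + [0,L]^n$, with $j \in \mathbb Z^n$, which tile $\mathbb R^n$ and satisfy $\Leb(\omega \cap I_j) \geq \gamma L^n$ by thickness. If one shows the local inequality
$$\Vert g\Vert_{L^2(I_j)} \leq \Big(\frac{C_n}{\gamma}\Big)^{C_n(1+Lk)}\Vert g\Vert_{L^2(\omega \cap I_j)}$$
uniformly in $j$, then squaring and summing yields the theorem.

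To prove the local estimate, I would rescale to the unit cube by setting $G_j(y) = g(x_j + Ly)$ for $y \in \mathbb R^n$. Since $\Supp \widehat g \subset [-k,k]^n$, the Paley--Wiener theorem identifies $G_j$ with the restriction of an entire function whose Fourier support is contained in $[-Lk,Lk]^n$, and Bernstein's inequality yields
$$\Vert\partial^\alpha G_j\Vert_{L^2(\mathbb R^n)} \leq (C_n Lk)^{\vert\alpha\vert}\Vert G_j\Vert_{L^2(\mathbb R^n)}.$$
I would then call the cube $I_j$ \emph{good} if the sharper localized version
$$\Vert\partial^\alpha G_j\Vert_{L^2([0,1]^n)}^2 \leq (A_n Lk)^{2\vert\alpha\vert}\Vert G_j\Vert_{L^2([0,1]^n)}^2$$
holds for every multi-index $\alpha$, with $A_n$ a sufficiently large dimensional constant. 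A standard pigeonhole/averaging argument, summing the global Bernstein bound against $\sum_j \Vert G_j\Vert_{L^2([0,1]^n)}^2 = L^{-n}\Vert g\Vert_{L^2(\mathbb R^n)}^2$, shows that the good cubes carry at least, say, half of the total $L^2$-mass of $g$, so it suffices to establish the local inequality on good cubes (the bad cubes being then absorbed by iteration).

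On a good cube, the function $G_j$ is real-analytic on $[0,1]^n$, and via Sobolev embedding $L^2 \hookrightarrow L^\infty$ its pointwise Taylor coefficients at any base point are controlled by $(A'_n Lk)^{\vert\alpha\vert}/\alpha!$ times $\Vert G_j\Vert_{L^2([0,1]^n)}$. The key analytic input is then a multivariate Remez-type inequality: for a real-analytic function $F$ on $[0,1]^n$ whose Taylor coefficients obey such a bound, and for any measurable $E \subset [0,1]^n$ with $\vert E\vert \geq \gamma$,
$$\Vert F\Vert_{L^2([0,1]^n)} \leq \Big(\frac{C_n}{\gamma}\Big)^{C_n(1+Lk)}\Vert F\Vert_{L^2(E)}.$$
This is obtained by the classical one-dimensional Remez/Turán inequality applied along coordinate lines and then integrated (Brudnyi--Ganzburg). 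Applying it with $F = G_j$ and $E = L^{-1}(\omega \cap I_j - x_j)$, and undoing the rescaling, yields the desired local estimate on every good cube; summing the squares over good cubes and iterating once to recover the contribution of the bad cubes produces the global inequality.

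The main obstacle is obtaining the Remez step with the correct affine exponent $C_n(1+Lk)$. A crude iteration of the one-dimensional Remez inequality along the $n$ coordinate directions would compound constants multiplicatively and risks producing an exponent of size $(1+Lk)^n$ rather than $1+Lk$, which would destroy the sharpness. Kovrijkine's clever device is to slice one direction at a time and to use the good-cube Bernstein bound to control the worst-case $L^2$ norm of the one-dimensional sections that arise, so that each slicing adds only a dimensional factor $C_n$ to the base of the exponential rather than altering its exponent. Controlling this interaction between the multivariate slicing, the Bernstein estimate on good cubes, and the one-dimensional Remez bound is the technical heart of the proof.
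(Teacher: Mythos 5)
This statement is quoted from Kovrijkine's paper \cite{MR1840110} (Theorem~3 there) and the article you are reading does not supply a proof of it; there is therefore no in-paper argument to compare against. Your sketch is a faithful reconstruction of Kovrijkine's original strategy: tile $\mathbb R^n$ by cubes of side $L$, rescale, isolate ``good'' cubes on which the rescaled band-limited function satisfies a Bernstein-type bound with a controlled constant, show by a weighted pigeonhole that the good cubes carry a fixed fraction of the $L^2$-mass, and then prove the desired local estimate on each good cube by exploiting real-analyticity with a Remez-type input. Two small remarks on the details you gloss over. First, the key analytic lemma Kovrijkine uses on a good cube is a complex-analytic doubling estimate on a disk (if $\phi$ is analytic on $\vert z\vert\le5$, $\vert\phi(0)\vert\ge1$, and $\sup\vert\phi\vert\le M$, then $\sup_{[0,1]}\vert\phi\vert\le (C/\vert E\vert)^{\log M/\log2}\sup_E\vert\phi\vert$ for any $E\subset[0,1]$), applied to one-dimensional restrictions of the rescaled function along a well-chosen direction; this is not the polynomial Remez inequality of Brudnyi--Ganzburg, though the role it plays is analogous. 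Second, no iteration is needed to ``absorb the bad cubes'': once the good cubes carry, say, half of $\Vert g\Vert_{L^2}^2$, the local estimate on good cubes alone already closes the argument. You have correctly identified the genuinely delicate point, namely that a naive coordinate-by-coordinate iteration of a one-dimensional Remez bound would produce an exponent that grows with $n$; Kovrijkine's one-line-at-a-time reduction, combined with the Bernstein control on good cubes, is exactly what keeps the exponent affine in $Lk$ with only dimensional constants in the base.
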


In view of the definition of the orthogonal cutoff projections $\pi_k$, and the control support $\omega\subset\mathbb R^n$ being thick by assumption, we deduce from the above theorem that there exists a positive constant $c>0$ such that
\begin{equation}\label{11112020E5}
	\forall k\geq1,\forall g\in L^2(\mathbb R^n), \quad \Vert\pi_ku\Vert_{L^2(\mathbb R^n)}\le e^{ck}\Vert\pi_ku\Vert_{L^2(\omega)}.
\end{equation}
This is the spectral estimate \eqref{11112020E2} with $a=1$. Moreover, since we assumed $2sm>1$, we get that $1/\mu_{s,k,m}>1$, that is, $b>a$. We therefore deduce from Theorem \ref{08122017T3} that there exists a positive constant $C>1$ such that for all $T>0$ and $g\in L^2(\mathbb R^n)$, the following observability estimate holds
$$\big\Vert e^{-TH^s_{k,m}}g\big\Vert^2_{L^2(\mathbb R^n)}\le C\exp\bigg(\frac C{T^{\beta_{s,k,m}}}\bigg)\int_0^T\big\Vert e^{-tH^s_{k,m}}g\big\Vert^2_{L^2(\omega)}\,\mathrm dt,$$
with
$$\beta_{s,k,m} = \max\bigg(\frac1{2sm-1},\frac km\bigg).$$
This ends the proof of Theorem \ref{09112020T1}.

\subsection{Isotropic case} In this second subsection, we prove Theorem \ref{12112020T1}. We therefore assume that $k=m=l$. In fact, Theorem \ref{12112020T1} is a direct consequence of a result from the paper \cite{MPS} by J. Martin and K. Pravda-Starov. One of the purposes of this work, see Subsection 2.3, is to study the null-controllability of linear evolution equations posed on the whole space $\mathbb R^n$ and enjoying smoothing properties in symmetric Gelfand-Shilov spaces. More specifically, these authors consider strongly contraction semigroups $(e^{-tA})_{t\geq0}$ satisfying that there exist some positive constants $1/2<\gamma\le 1$, $C_{\gamma}>1$, $0<t_0<1$ and $m_1,m_2\in\mathbb R$ with $m_1>0,m_2\geq0$ such that for all $0<t<t_0$, $(\alpha,\beta)\in\mathbb N^{2n}$ and $g\in L^2(\mathbb R^n)$,
\begin{equation}\label{12112020E3}
	\big\Vert x^{\alpha}\partial^{\beta}_x(e^{-tA}g)\big\Vert_{L^2(\mathbb R^n)}\le\frac{C_{\gamma}^{1+\vert\alpha+\beta\vert}}{t^{m_1\vert\alpha+\beta\vert+m_2}}\ (\alpha!)^{\frac1{2\gamma}}\ (\beta!)^{\frac1{2\gamma}}\ \Vert g\Vert_{L^2(\mathbb R^n)}.
\end{equation}
By exploiting the Lebeau-Robbiano strategy, and more precisely Theorem \ref{08122017T3} applied with spectral inequalities for finite combinations of Hermite functions obtained in the same work \cite{MPS} (Theorem 2.1), J. Martin and K. Pravda-Starov established the following positive null-controllability result:

\begin{thm}[Theorem 2.5 in \cite{MPS}]\label{12112020T2} Let $A$ be a closed operator on $L^2(\mathbb R^n)$ which is the infinitesimal generator of a strongly continuous contraction semigroup $(e^{-tA})_{t\geq0}$ on $L^2(\mathbb R^n)$ that satisfies the quantitative smoothing estimates \eqref{12112020E3} for some $1/2<\gamma\le1$. We consider a $1/2$-Lipschitz function $\rho:\mathbb R^n\rightarrow(0,+\infty)$, the space $\mathbb R^n$ being equipped with the canonical Euclidean norm, satisfying that there exist some positive constants $0\le\delta<2\gamma-1$, and $c_1,c_2>0$ such that 
$$\forall x\in\mathbb R^n,\quad c_1\le\rho(x)\le c_2\langle x\rangle^{\delta}.$$
If $\omega\subset\mathbb R^n$ is a measurable set which is thick with respect to the density $\rho$, then the evolution equation associated with the $L^2(\mathbb R^n)$-adjoint $A^*$ of the operator $A$
$$\left\{\begin{aligned}
	& \partial_tf(t,x) + A^*f(t,x) = h(t,x)\mathbbm 1_{\omega}(x),\quad t>0,\ x\in\mathbb R^n, \\
	& f(0,\cdot) = f_0\in L^2(\mathbb R^n),
\end{aligned}\right.$$
is null-controllable from the set $\omega$ in any positive time $T>0$.
\end{thm}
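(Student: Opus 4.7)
The plan is to prove the abstract null-controllability result by combining the Hilbert Uniqueness Method with a Lebeau-Robbiano argument (Theorem \ref{08122017T3}) applied to the spectral decomposition into Hermite functions. Since the smoothing estimate \eqref{12112020E3} describes membership in the symmetric Gelfand-Shilov space $S^{1/(2\gamma)}_{1/(2\gamma)}(\mathbb R^n)$, this space admits a characterization via decay of Hermite coefficients, which makes the Hermite basis the natural frame in which to formulate the dissipation estimate and pair it with a spectral inequality adapted to densities.

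First, by the Hilbert Uniqueness Method (see e.g. \cite{MR2302744}), null-controllability of the control system driven by $A^*$ is equivalent to the observability estimate $\Vert e^{-TA}g\Vert^2_{L^2(\mathbb R^n)}\le C(T,\omega)\int_0^T\Vert e^{-tA}g\Vert^2_{L^2(\omega)}\,\mathrm dt$ for every $g\in L^2(\mathbb R^n)$. To prove this estimate, I would introduce the family of orthogonal projections $\pi_N:L^2(\mathbb R^n)\to\mathrm{Vect}\{h_\alpha:|\alpha|\le N\}$ onto the first $N$ modes of the Hermite basis of $L^2(\mathbb R^n)$, and apply Theorem \ref{08122017T3} to $A$ with the subspaces $\pi_N L^2(\mathbb R^n)$ in the role of the spectral cut-off.

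The first ingredient is the spectral inequality. This is precisely the content of Theorem 2.1 in \cite{MPS}: when $\omega\subset\mathbb R^n$ is thick with respect to a $1/2$-Lipschitz density $\rho$ satisfying $c_1\le\rho(x)\le c_2\langle x\rangle^\delta$ with $0\le\delta<1$, there exist constants $c,c'>0$ such that
$$\forall N\geq 1,\ \forall g\in L^2(\mathbb R^n),\quad \Vert\pi_N g\Vert_{L^2(\mathbb R^n)}\le c'e^{cN^{(1+\delta)/2}}\Vert\pi_N g\Vert_{L^2(\omega)}.$$
This inequality arises from a Kovrijkine-type argument performed locally on cells of sidelength $\rho(x)$: since combinations of Hermite functions of degree at most $N$ are essentially concentrated in a ball of radius $\sqrt{N}$, the effective scale is $\sqrt{N}\cdot\langle x\rangle^\delta\lesssim N^{(1+\delta)/2}$. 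Thus in the notation of Theorem \ref{08122017T3}, we have $a=(1+\delta)/2$.

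The second, more delicate ingredient is the dissipation estimate. Starting from the quantitative smoothing \eqref{12112020E3}, one deduces $e^{-tA}g\in S^{1/(2\gamma)}_{1/(2\gamma)}(\mathbb R^n)$ with explicit control of seminorms in small time. By the Hermite-basis characterization of symmetric Gelfand-Shilov spaces (see \cite{MR3996060} and Subsection \ref{GS}), this translates into pointwise control
$$\big|\langle e^{-tA}g,h_\alpha\rangle_{L^2(\mathbb R^n)}\big|\le \frac{C}{t^{m_2}}\,\exp\!\big(-c\,t^{m_1'}|\alpha|^{\gamma}\big)\,\Vert g\Vert_{L^2(\mathbb R^n)},$$
for some exponents $m_1',m_2>0$ depending only on $m_1,m_2,\gamma$. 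Squaring, summing over $|\alpha|>N$, and absorbing the polynomially many terms into the exponential, yields a dissipation estimate of the form
$$\big\Vert(1-\pi_N)(e^{-tA}g)\big\Vert_{L^2(\mathbb R^n)}\le \frac{C'}{t^{m_2'}}\,e^{-c't^{m_1'}N^{\gamma}}\Vert g\Vert_{L^2(\mathbb R^n)},$$
which is \eqref{11112020E3} with $b=\gamma$. Extracting the correct power of $t$ in front of $N^\gamma$ from \eqref{12112020E3} is where most of the technical work lies; this is the main obstacle and is carried out in detail in \cite{MPS} via optimization over $\alpha,\beta$ in \eqref{12112020E3} followed by the Hermite characterization.

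Finally, since the hypothesis $\delta<2\gamma-1$ is exactly the condition $(1+\delta)/2<\gamma$, i.e.\ $a<b$, Theorem \ref{08122017T3} applies and delivers the observability estimate
$$\big\Vert e^{-TA}g\big\Vert^2_{L^2(\mathbb R^n)}\le C\exp\!\Big(\frac{C}{T^{\gamma m_1'/(\gamma-(1+\delta)/2)}}\Big)\int_0^T\big\Vert e^{-tA}g\big\Vert^2_{L^2(\omega)}\,\mathrm dt$$
for every $T>0$ and $g\in L^2(\mathbb R^n)$, which by the Hilbert Uniqueness Method is equivalent to the announced null-controllability of the control system associated with $A^*$ from $\omega$ in any positive time $T>0$.
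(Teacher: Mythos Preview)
Your proposal is correct and follows exactly the approach outlined in the paper (and in \cite{MPS}): the paper explicitly states, just before Theorem \ref{12112020T2}, that the result is obtained by applying the Lebeau--Robbiano strategy of Theorem \ref{08122017T3} with the spectral inequalities for finite combinations of Hermite functions from \cite{MPS} (Theorem 2.1), and later remarks that the dissipation estimate relies on the Hermite-basis characterization of $S^{1/(2\gamma)}_{1/(2\gamma)}(\mathbb R^n)$. Your identification of $a=(1+\delta)/2$, $b=\gamma$, and of the hypothesis $\delta<2\gamma-1$ as precisely the condition $a<b$, matches this scheme.
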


Let us recall that the operator $H^s_{l,l}$ we are considering in this subsection is selfadjoint. Moreover, we deduce from \eqref{20112020E5} that there exists a positive constant $C>0$ such that for all $0<t<T$, $(\alpha,\beta)\in\mathbb N^{2n}$ and $g\in L^2(\mathbb R^n)$,
$$\big\Vert x^{\alpha}\partial^{\beta}_x(e^{-tH^s_{l,l}}g)\big\Vert_{L^2(\mathbb R^n)}\le\frac{C^{\vert\alpha+\beta\vert}}{t^{\frac{\vert\alpha+\beta\vert}{\min(2sl,2)}+\frac n{sl}}}\ (\alpha!)^{\frac1{\min(2sl,2)}}\ (\beta!)^{\frac1{\min(2sl,2)}}\ \Vert g\Vert_{L^2(\mathbb R^n)}.$$
The proof of Theorem \ref{12112020T1} is therefore ended after using Theorem \ref{12112020T2}.

\subsection{Fractional harmonic oscillator} To end this section, let us present the proof of Proposition \ref{09112020P1}. As announced, the following proof was communicated to the author by J. Martin. It does not directly use resuts obtained in the present work but is based on Theorem \ref{08122017T3} again. We assume that $s>1$ and $k=m=1$, that is, we consider the evolution equation associated with large fractional powers of the standard harmonic oscillator. Let us consider this time a measurable set $\omega\subset\mathbb R^n$ with a positive Lebesgue measure and the sequence $(p_k)_{k\geq1}$ of orthogonal cutoff projections with respect to the Hermite basis of $L^2(\mathbb R^n)$ defined by
\begin{equation}\label{12112020E5}
	p_k : L^2(\mathbb R^n) \rightarrow\Vect_{\mathbb C}\{\Phi_{\alpha}\}_{\vert\alpha\vert\le k},
\end{equation}
where $(\Phi_{\alpha})_{\alpha\in\mathbb N^n}$ denotes the Hermite basis of $L^2(\mathbb R^n)$. On the one hand, since the eigenvalues of the harmonic oscillator $H_{1,1}$ associated with the eigenfunction $\Phi_{\alpha}$ is given by $2\vert\alpha\vert+n$, we get from Parseval's formula that for all $t\geq0$,
\begin{align}\label{12112020E4}
	\big\Vert(1-p_k)e^{-tH^s_{1,1}}g\big\Vert^2_{L^2(\mathbb R^n)} 
	& = \sum_{\vert\alpha\vert\geq k+1}\big\vert\langle g,\Phi_{\alpha}\rangle_{L^2(\mathbb R^n)}\big\vert^2e^{-2t(2\vert\alpha\vert+n)^s} \\[5pt]
	& \le e^{-2t(2(k+1)+n)^s}\Vert g\Vert^2_{L^2(\mathbb R^n)}. \nonumber
\end{align}
On the other hand, we use the following spectral inequalities for finite combinaisons of Hermite functions proven by K. Beauchard, P. Jaming and K. Pravda-Starov:

\begin{thm}[Theorem 2.1 in \cite{BJKPS}] If $\omega\subset\mathbb R^n$ is a measurable set with a positive Lebesgue measure, then there exists a positive constant $C=C(\omega)>1$ such that
$$\forall k\geq1, \forall g\in L^2(\mathbb R^n),\quad\Vert p_k g\Vert_{L^2(\mathbb R^n)}\le Ce^{\frac12k\log(k+1) + Ck}\Vert p_kg\Vert_{L^2(\omega)},$$
the orthogonal cutoff projections $p_k$ being the ones defined in \eqref{12112020E5}.
\end{thm}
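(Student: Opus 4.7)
The plan is to reduce the spectral inequality to a Remez--Nazarov type estimate for polynomials against a Gaussian weight. Any element of $\Vect_{\mathbb C}\{\Phi_\alpha\}_{|\alpha|\le k}$ can be written as $g(x) = P(x) e^{-|x|^2/2}$, where $P$ is a polynomial of degree at most $k$ in $n$ real variables, since every Hermite function has this form. The target inequality then becomes, for all such $P$,
$$\int_{\mathbb R^n} |P(x)|^2 e^{-|x|^2}\,\mathrm dx \le C^2 e^{k\log(k+1)+2Ck} \int_\omega |P(x)|^2 e^{-|x|^2}\,\mathrm dx.$$

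First I would localize the right-hand side. By the Lebesgue density theorem applied to $\omega$, there exist a Euclidean cube $Q = Q(x_0,L) \subset \mathbb R^n$ and a constant $\gamma \in (0,1)$ (depending only on $\omega$) such that $|\omega \cap Q| \ge \gamma |Q|$. On $Q$ the Gaussian weight $e^{-|x|^2}$ is bounded above and below by positive constants, so the $L^2(\omega)$ weighted norm of $g$ is equivalent to the unweighted $L^2(\omega\cap Q)$ norm of $P$. A multivariate Remez--Nazarov inequality for polynomials of degree $\le k$ then yields, for any measurable $E \subset Q$ of positive measure,
$$\|P\|_{L^2(Q)} \le \left(\frac{C_n |Q|}{|E|}\right)^{C_n k} \|P\|_{L^2(E)},$$
which provides control of $\|P\|_{L^2(Q)}$ by $\|P\|_{L^2(\omega\cap Q)}$ at an exponential cost $e^{C(\omega)k}$ in $k$.

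The decisive step, which accounts for the unusual $(k+1)^{k/2} = e^{\frac12 k\log(k+1)}$ factor, is passing from the local bound on the fixed cube $Q$ to the global Gaussian-weighted norm on $\mathbb R^n$. I would split at the radius $R \asymp \sqrt{k}$, which is the natural concentration scale of Hermite functions of order $\le k$. On the ball $B(0,R)$, propagating the inequality from $Q$ can be done by iterated Bernstein--Markov estimates for polynomials, or equivalently by a Taylor expansion at the center of $Q$ combined with Markov's inequality controlling the derivatives of $P$ on $Q$; the cost of passing from a cube of unit size to a ball of radius $R \asymp \sqrt k$ scales essentially as $R^k = k^{k/2}$, producing precisely the $(k+1)^{k/2}$ factor. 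Outside $B(0,R)$, the Gaussian factor $e^{-|x|^2}$ crushes the polynomial growth $|P(x)| \le R^k \|P\|_{L^\infty(Q)}$, so the tail $\int_{|x|>R} |P|^2 e^{-|x|^2}\,\mathrm dx$ is negligible in comparison.

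The main obstacle is obtaining the sharp exponent $\frac12 k \log(k+1)$ rather than a larger power of $k$. Naive Taylor or Bernstein arguments tend to produce $(k!)^\alpha$ with $\alpha > 1/2$; to bring this down to $\sqrt{k!}$ one must exploit carefully the matching between the polynomial degree $k$ and the Hermite scale $\sqrt k$, either by optimizing the radius $R$ in the splitting above or by working directly with the holomorphic extension of $g$ to an entire function of order $2$ on $\mathbb C^n$ and applying a Phragm\'en--Lindel\"of/Carleman type argument to convert complex growth into global $L^2$ bounds. This Hermite-Gaussian rigidity in the constant is the heart of the result.
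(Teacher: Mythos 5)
This statement is not proved in the paper: it is quoted verbatim from \cite{BJKPS} (Theorem 2.1) and, as the author of the present paper explicitly notes, its extension from open sets to arbitrary Borel sets of positive measure is taken from \cite{HWW} (Lemma 3.2). There is therefore no in-paper proof to compare your attempt against; I can only assess your sketch on its own merits.

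Your global strategy --- write $g=Pe^{-|x|^2/2}$ with $\deg P\le k$, localize $\omega$ via the Lebesgue density theorem to a cube $Q$, apply a multivariate Remez inequality to pass from $\omega\cap Q$ to $Q$ at cost $e^{C(\omega)k}$, extrapolate the polynomial from $Q$ to the Hermite concentration scale $R\asymp\sqrt k$ at cost $(R/L)^{k}\asymp e^{\frac12 k\log k+Ck}$, and discard the Gaussian tail --- is indeed the correct skeleton for this result. Two points deserve a more careful treatment, one of which is a genuine gap.

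First, the tail estimate as written is circular. The bound $|P(x)|\le R^k\|P\|_{L^\infty(Q)}$ for $|x|>R$ is false: for a polynomial of degree $k$, Chebyshev extrapolation from a cube of side $L$ gives at best $|P(x)|\lesssim(C|x|/L)^k\|P\|_{L^\infty(Q)}$, which still grows. More importantly, even with the correct growth you would be comparing the tail to $\|P\|_{L^\infty(Q)}$, which is exactly the quantity you are in the process of bounding, so the comparison does not close. The standard way to dominate the tail is to use the spectral concentration of elements of $\Vect_{\mathbb C}\{\Phi_\alpha\}_{|\alpha|\le k}$ directly: since $\langle H_{1,1}g,g\rangle_{L^2}\le(2k+n)\|g\|^2_{L^2}$ for such $g$, Markov's inequality yields $\int_{|x|>R}|g|^2\,\mathrm dx\le R^{-2}(2k+n)\|g\|^2_{L^2}$, so that $R=2\sqrt{2k+n}$ absorbs at least $3/4$ of the mass into $B(0,R)$. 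After that one only needs to control $\|g\|_{L^2(B(0,R))}$, and extrapolation plus Remez then apply cleanly with no circularity.

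Second, your worry that naive Taylor-plus-Markov produces $(k!)^\alpha$ with $\alpha>1/2$ is justified, but the fix is simpler than a Phragm\'en--Lindel\"of argument: the tensorized Chebyshev bound gives exactly $\|P\|_{L^\infty(\lambda Q)}\le T_k(2\lambda-1)\|P\|_{L^\infty(Q)}\lesssim(4\lambda)^k\|P\|_{L^\infty(Q)}$, so extrapolating from the density cube $Q$ of side $L$ to $B(0,R)$ with $R\asymp\sqrt k$ directly gives the $e^{\frac12 k\log(k+1)+Ck}$ factor without any intermediate iteration of derivative bounds. Combined with a Nikolskii inequality (which contributes only an $e^{Ck}$ factor and a polynomial prefactor), this completes the chain from $\|g\|_{L^2(\mathbb R^n)}$ down to $\|g\|_{L^2(\omega)}$.
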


Actually, this spectral inequality was originally stated for non-empty open sets $\omega\subset\mathbb R^n$ in \cite{BJKPS} and then extended to Borel sets with positive measures in the work \cite{HWW} (Lemma 3.2) by borrowing some ideas from the proof of \cite{BJKPS} (Theorem 2.1). Since $s>1$ by assumption, we can consider $1<s'<s$ a positive real number. It follows from the above theorem that there exists a positive constant $c>0$ such that 
$$\forall k\geq1, \forall g\in L^2(\mathbb R^n),\quad\Vert p_k g\Vert_{L^2(\mathbb R^n)}\le ce^{ck^{s'}}\Vert p_kg\Vert_{L^2(\omega)}.$$
This spectral inequality and the dissipation estimate \eqref{12112020E4} end the proof of Proposition \ref{09112020P1} thanks to Theorem \ref{08122017T3} (recall that $s'<s$).

\section{Agmon estimates for anisotropic Shubin operators}\label{agmon}

This section is devoted to the proof of Theorem \ref{19022020T1}. In the following, we will not use any results existing in the literature concerning the Schwartz regularity of the eigenfunctions of anisotropic Shubin operators, since we want to recover the Gelfand-Shilov regularity of those eigenfunctions, with new precise estimates of the associated seminorms. Let $0\le\sigma\le1$ be a non-negative real number, $k,m\geq1$ be positive integers and $H_{k,m}$ be the associated anisotropic Shubin operator defined in \eqref{19022020E1} and equipped with domain \eqref{25092020E1}. It is sufficient to prove that there exist some positive constants $c_1,c_2>0$ and $T>0$ such that for all eigenfunction $\psi\in L^2(\mathbb R^n)$ of the operator $H_{k,m}$ and $0\le t\le T$,
\begin{equation}\label{21022020E3}
	\big\Vert e^{c_1t\langle x\rangle^{\sigma(1+\frac km)}}\psi\big\Vert_{L^2(\mathbb R^n)} \le c_2e^{c_2t\lambda^{\sigma(\frac1{2k}+\frac1{2m})}}\Vert\psi\Vert_{L^2(\mathbb R^n)},
\end{equation}
with $\lambda>0$ the eigenvalue associated with the eigenfunction $\psi$. Indeed, notice that $\psi\in L^2(\mathbb R^n)$ is an eigenfunction of the operator $H_{k,m}$ if and only if its Fourier transform $\widehat{\psi}$ is an eigenfunction of the operator $H_{m,k}$ associated with the same eigenvalue.  As a consequence, once \eqref{21022020E3} is established, we deduce by exchanging the roles of the integers $k$ and $m$ that for all eigenfunction $\psi\in L^2(\mathbb R^n)$ of the operator $H_{k,m}$ and $0\le t\le T$,
$$\big\Vert e^{c_1t\langle x\rangle^{\sigma(1+\frac mk)}}\widehat{\psi}\big\Vert_{L^2(\mathbb R^n)} \le c_2e^{c_2t\lambda^{\sigma(\frac1{2m}+\frac1{2k})}}\Vert\widehat{\psi}\Vert_{L^2(\mathbb R^n)},$$
with $\lambda>0$ the eigenvalue associated with the eigenfunction $\psi$. Plancherel's theorem then implies that 
$$\big\Vert e^{c_1t\langle D_x\rangle^{\sigma(1+\frac mk)}}\psi\big\Vert_{L^2(\mathbb R^n)} \le c_2e^{c_2t\lambda^{\sigma(\frac1{2k}+\frac1{2m})}}\Vert\psi\Vert_{L^2(\mathbb R^n)},$$
which ends the proof of Theorem \ref{19022020T1}. We therefore focus on proving the estimate \eqref{21022020E3}. Let $\psi\in L^2(\mathbb R^n)$ be an eigenfunction of the operator $H_{k,m}$ associated with the eigenvalue $\lambda>0$. We consider the smooth function $\phi\in C^{\infty}(\mathbb R^n,\mathbb R)$ defined for all $x\in\mathbb R^n$ by
\begin{equation}\label{05032020E1}
	\phi(x) = \langle x\rangle^{\sigma(1+\frac km)}.
\end{equation}
In the following, we will need to deal with a compactly supported approximation of the function $\psi$. To that end, let us consider a cut-off odd function $\chi\in C^{\infty}_0(\mathbb R,\mathbb R)$ satisfying that $\chi(x) = x$ for all $0\le x\le1$, $\chi(x) = 0$ when $x\geq2$ and $\chi(x)\geq0$ for all $x\geq0$. For all $\varepsilon>0$, we consider the compactly supported functions $\chi_{\varepsilon}$ and $\phi_{\varepsilon}$ respectively defined for all $x\in\mathbb R^n$ by
\begin{equation}\label{03022020E1}
	\chi_{\varepsilon}(x) = \frac1{\varepsilon}\chi(\varepsilon x)\quad \text{and}\quad \phi_{\varepsilon}(x) = (\chi_{\varepsilon}\circ\phi)(x).
\end{equation}
Notice that by construction, the family $(\chi_{\varepsilon})_{\varepsilon>0}$ is an approximation of the identity function. We also need to deal with a Schwartz approximation of the eigenfunction $\psi$. We therefore consider $(\psi_j)_j$ a sequence in $\mathscr S(\mathbb R^n)$, given by Proposition \ref{07102020P1} in Appendix, and satisfying
\begin{equation}\label{21022020E6}
	\lim_{j\rightarrow+\infty}\psi_j = \psi\quad\text{and}\quad \lim_{j\rightarrow+\infty}H_{k,m}\psi_j = H_{k,m}\psi\quad \text{in $L^2(\mathbb R^n)$}.
\end{equation}
We can now tackle the proof of the estimate \eqref{21022020E3}. Our approach is based on a classical Agmon strategy. The first step consists in noticing that for all $j\geq0$, $\varepsilon>0$ and $t\geq0$, the term $\langle H_{k,m}\psi_j,e^{2t\phi_{\varepsilon}}\psi_j\rangle_{L^2}$ can be written in the two following ways
$$\big\langle H_{k,m}\psi_j,e^{2t\phi_{\varepsilon}}\psi_j\big\rangle_{L^2(\mathbb R^n)} = \lambda\big\Vert e^{t\phi_{\varepsilon}}\psi_j\big\Vert^2_{L^2(\mathbb R^n)} 
+  \big\langle H_{k,m}\psi_j-\lambda\psi_j,e^{2t\phi_{\varepsilon}}\psi_j\big\rangle_{L^2(\mathbb R^n)},$$
and
$$\big\langle H_{k,m}\psi_j,e^{2t\phi_{\varepsilon}}\psi_j\big\rangle_{L^2(\mathbb R^n)} = \big\langle(-\Delta)^m\psi_j,e^{2t\phi_{\varepsilon}}\psi_j\big\rangle_{L^2(\mathbb R^n)} + \big\langle\vert x\vert^{2k}\psi_j,e^{2t\phi_{\varepsilon}}\psi_j\big\rangle_{L^2(\mathbb R^n)}.$$
The second step consists in controlling the term involving $(-\Delta)^m$. After making the change of function $v_j = e^{t\phi_{\varepsilon}}\psi_j\in\mathscr S(\mathbb R^n)$, this term writes in the most useful following form 
\begin{equation}\label{21022020E7}
	\langle(-\Delta)^m\psi_j,e^{2t\phi_{\varepsilon}}\psi_j\rangle_{L^2(\mathbb R^n)} = \big\langle e^{t\phi_{\varepsilon}}(-\Delta)^m(e^{-t\phi_{\varepsilon}}v_j),v_j\big\rangle_{L^2(\mathbb R^n)}.
\end{equation}
In order to manage it, we use the estimate given by the following proposition which provides a G{\aa}rding type inequality and whose proof is postponed in Section \ref{secgard}:

\begin{prop}\label{19022020P1} There exists a positive constant $c_0>0$ depending on the function $\phi$ such that for all $0<\varepsilon\le1$, $0\le t\le1$ and $v\in\mathscr S(\mathbb R^n)$,
$$\big\langle e^{t\phi_{\varepsilon}}(-\Delta)^m(e^{-t\phi_{\varepsilon}}v),v\big\rangle_{L^2(\mathbb R^n)}
+c_0\Big(\Vert v\Vert^2_{L^2(\mathbb R^n)} + t\big\Vert\langle x\rangle^{\sigma k}v\big\Vert^2_{L^2(\mathbb R^n)}\Big)\geq0.$$
\end{prop}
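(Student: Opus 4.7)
The plan is to conjugate $(-\Delta)^m$ by the exponential weight as a differential operator, and then to extract a pointwise-positive quadratic form modulo error terms that vanish at $t=0$. Introduce for $1\leq j\leq n$ the first-order operators $Z_j := D_j + it\partial_j\phi_\varepsilon$. The symmetry of the Hessian of $\phi_\varepsilon$ gives $[Z_j,Z_k]=0$, and a direct chain-rule computation yields $e^{t\phi_\varepsilon}D_j e^{-t\phi_\varepsilon} = Z_j$; hence by the multinomial theorem (legitimate by commutativity of the $Z_j$'s),
\[
e^{t\phi_\varepsilon}(-\Delta)^m e^{-t\phi_\varepsilon} = \Big(\sum_{j=1}^n Z_j^2\Big)^m = \sum_{|\alpha|=m}\frac{m!}{\alpha!}Z^{2\alpha}.
\]
For each $\alpha$ with $|\alpha|=m$, writing $Z^{2\alpha} = Z^\alpha\cdot Z^\alpha$ and using $Z_j^* = Z_j - 2it\partial_j\phi_\varepsilon$ together with the commutativity of the $Z_j^*$'s gives $\langle Z^{2\alpha}v, v\rangle_{L^2} = \langle Z^\alpha v, (Z^*)^\alpha v\rangle_{L^2}$. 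Expanding $(Z^*)^\alpha = \prod_j(Z_j - 2it\partial_j\phi_\varepsilon)^{\alpha_j}$ noncommutatively and grouping by powers of $t$ produces a Taylor-like decomposition $(Z^*)^\alpha = Z^\alpha + tE_\alpha$, where $E_\alpha$ is a differential operator of order at most $m-1$ whose coefficients are polynomials in $t$ and in derivatives of $\phi_\varepsilon$ of order $\geq 1$.

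Taking real parts and summing over $\alpha$ then yields
\[
\mathrm{Re}\langle e^{t\phi_\varepsilon}(-\Delta)^m e^{-t\phi_\varepsilon}v, v\rangle_{L^2} = \sum_{|\alpha|=m}\frac{m!}{\alpha!}\Vert Z^\alpha v\Vert_{L^2}^2 + t\sum_{|\alpha|=m}\frac{m!}{\alpha!}\mathrm{Re}\langle Z^\alpha v, E_\alpha v\rangle_{L^2},
\]
the first sum being nonnegative and providing the main contribution; the second sum is the error to be absorbed.

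The crucial input for controlling the error is the uniform estimate
\[
\forall \gamma\in\mathbb N^n\text{ with }|\gamma|\geq 1,\quad |\partial^\gamma \phi_\varepsilon(x)|\leq C_\gamma\langle x\rangle^{\sigma(1+k/m)-|\gamma|},
\]
valid for all $x\in\mathbb R^n$ and $0<\varepsilon\leq 1$, which follows from the chain rule applied to $\phi_\varepsilon = \chi_\varepsilon\circ\phi$ (using uniform bounds on all derivatives of $\chi_\varepsilon$) together with $|\partial^\gamma\langle x\rangle^{\sigma(1+k/m)}|\leq C_\gamma\langle x\rangle^{\sigma(1+k/m)-|\gamma|}$. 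Combined with the hypothesis $\sigma\leq 1$, this gives the pivotal inequality $\sigma(1+k/m) - 1 \leq \sigma k$, which tethers the weight produced by a single derivative of $\phi_\varepsilon$ to the admissible weight $\langle x\rangle^{\sigma k}$. A weighted Cauchy--Schwarz with small parameter applied to the cross terms gives
\[
t\,|\mathrm{Re}\langle Z^\alpha v, E_\alpha v\rangle|\leq \tfrac{1}{2}\Vert Z^\alpha v\Vert^2 + Ct^2\sum_{|\beta|\leq m-1}\Vert\langle x\rangle^{\sigma(1+k/m)-1}D^\beta v\Vert^2,
\]
and an induction on $m$, combined with $t^2\leq t$ for $t\leq 1$ and iterated interpolation between weighted Sobolev norms, reduces the remaining weighted terms to a bound of the form $C\Vert v\Vert^2 + Ct\Vert\langle x\rangle^{\sigma k}v\Vert^2$.

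The main obstacle will be this last absorption step: the operator $E_\alpha$ contains many cross-terms of different differential orders and different weights (in particular higher derivatives of $\phi_\varepsilon$ that appear through the noncommutative expansion), and one must verify at each inductive stage that every weighted norm $\Vert\langle x\rangle^\mu D^\beta v\Vert$ arising in the error can either be absorbed into the positive quantities $\Vert Z^\gamma v\Vert^2$ already at hand or be bounded by the two admissible right-hand-side terms. The iterated version $\sigma(1+k/m)-|\gamma|\leq \sigma k - (|\gamma|-1)$ for $|\gamma|\geq 1$ (again valid since $\sigma\leq 1$) is exactly what forces every extra derivative of $\phi_\varepsilon$ to produce an additional power of $\langle x\rangle^{-1}$ over the target weight, and is what ultimately makes the bookkeeping close.
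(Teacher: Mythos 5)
Your approach is genuinely different from the paper's. You conjugate the Laplacian by the exponential weight at the level of the first-order operators $Z_j = D_j + it\partial_j\phi_\varepsilon$ and integrate by parts to extract the positive quantity $\sum_{\vert\alpha\vert = m}\frac{m!}{\alpha!}\Vert Z^\alpha v\Vert^2_{L^2}$, absorbing the lower-order error via weighted Cauchy--Schwarz. The paper instead Taylor-expands $e^{t\phi_\varepsilon}(-\Delta)^m e^{-t\phi_\varepsilon}$ in powers of $t$ through iterated commutators $\ad^j_{\phi_\varepsilon}(-\Delta)^m$ (Lemma \ref{18082020L1}), extracts $\Vert v\Vert^2_{\dot H^m}$ as the $j=0$ term, and controls the remainder through Weyl calculus (Lemmas \ref{10092020L1}--\ref{11092020L2}) and an anti-Wick G{\aa}rding inequality (Proposition \ref{17092020P1}). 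Your algebraic skeleton is correct: the conjugation identity $e^{t\phi_\varepsilon}D_j e^{-t\phi_\varepsilon}=Z_j$, the mutual commutativity of the $Z_j$, the telescoping decomposition $(Z^*)^\alpha = Z^\alpha + tE_\alpha$ with $E_\alpha$ of order $\le m-1$, and the uniform-in-$\varepsilon$ bound $\vert\partial^\gamma\phi_\varepsilon\vert\lesssim\langle x\rangle^{\sigma(1+k/m)-\vert\gamma\vert}$ are all fine; you also rightly pass to real parts, which the paper's statement tacitly assumes.

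The genuine gap is in the final absorption, which you flag but do not carry out. After Cauchy--Schwarz you are left with mixed weighted-Sobolev quantities of the form $\Vert\langle x\rangle^{\mu}D^{\beta}v\Vert^2_{L^2}$ with, say, $\mu\le\sigma kj/m$ and $\vert\beta\vert\le m-j$ for $1\le j\le m$, and these must be bounded by $\eta\Vert v\Vert^2_{\dot H^m}+\eta^{-1}\Vert\langle x\rangle^{\sigma k}v\Vert^2_{L^2}+C_\eta\Vert v\Vert^2_{L^2}$. The pointwise Young inequality $\langle x\rangle^{2\mu}\langle\xi\rangle^{2\vert\beta\vert}\le\eta^{-1}\langle x\rangle^{2\sigma k}+\eta\langle\xi\rangle^{2m}$ at the symbol level does \emph{not} automatically upgrade to the corresponding inequality on quadratic forms, because the weight and the derivative do not commute; trying to separate them by naive integration by parts generates terms such as $\langle x\rangle^{2\mu}D^{\beta+e_j}v$ in which both the weight and the order increase simultaneously, so the induction does not obviously close. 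This is precisely the role of the paper's G{\aa}rding-type Proposition \ref{17092020P1}, proved via anti-Wick quantization, which turns the symbolic Young inequality into a bound on $\langle a^w v,v\rangle_{L^2}$. Your phrase ``iterated interpolation between weighted Sobolev norms'' names the missing lemma but does not supply it; until that step is carried out (either by a careful hand-written integration-by-parts scheme tracking the weight exponents, or by invoking a sharp G{\aa}rding/anti-Wick argument as the paper does), the proof is incomplete.
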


We deduce from \eqref{21022020E7}, the above proposition and Cauchy-Schwarz' inequality that for all $j\geq0$, $0<\varepsilon\le1$ and $0\le t\le1$,
\begin{multline*}
	\big\langle\vert x\vert^{2k}\psi_j,e^{2t\phi_{\varepsilon}}\psi_j\big\rangle_{L^2(\mathbb R^n)} - c_0\big\Vert e^{t\phi_{\varepsilon}}\psi_j\big\Vert^2_{L^2(\mathbb R^n)} - c_0t\big\Vert\langle x\rangle^{\sigma k}e^{t\phi_{\varepsilon}}\psi_j\big\Vert^2_{L^2(\mathbb R^n)} \\[5pt]
	\le\lambda\big\Vert e^{t\phi_{\varepsilon}}\psi_j\big\Vert^2_{L^2(\mathbb R^n)} + \big\Vert H_{k,m}\psi_j-\lambda\psi_j\big\Vert_{L^2(\mathbb R^n)}\big\Vert e^{2t\phi_{\varepsilon}}\psi_j\big\Vert_{L^2(\mathbb R^n)}.
\end{multline*}
This estimate also takes the following integral form
$$\int_{\mathbb R^n}\big(\vert x\vert^{2k} - c_0t\langle x\rangle^{2\sigma k} - c_0 - \lambda\big)\ e^{2t\phi_{\varepsilon}(x)}\vert\psi_j(x)\vert^2\,\mathrm dx
\le\big\Vert H_{k,m}\psi_j-\lambda\psi_j\big\Vert_{L^2(\mathbb R^n)}\big\Vert e^{2t\phi_{\varepsilon}}\psi_j\big\Vert_{L^2(\mathbb R^n)}.$$
The third step of the proof consists in managing the above integral. To that end, we will distinguish the two regions in $\mathbb R^n$ where $\vert x\vert^{2k}>M\lambda$ and $\vert x\vert^{2k}\le M\lambda$ respectively, with $M\gg1$ a large constant independent of the Schwartz functions $\psi_j$ and the eigenvalue $\lambda$ whose value will be chosen later, by writing 
\begin{multline}\label{21022020E2}
	\int_{\vert x\vert^{2k}>M\lambda}\big(\vert x\vert^{2k} - c_0t\langle x\rangle^{2\sigma k} - c_0 - \lambda\big)\ e^{2t\phi_{\varepsilon}(x)}\vert\psi_j(x)\vert^2\,\mathrm dx \\
	\le\big\Vert H_{k,m}\psi_j-\lambda\psi_j\big\Vert_{L^2(\mathbb R^n)}\big\Vert e^{2t\phi_{\varepsilon}}\psi_j\big\Vert_{L^2(\mathbb R^n)} \\[5pt]
	+ \int_{\vert x\vert^{2k}\le M\lambda}\big(\lambda + c_0t\langle x\rangle^{2\sigma k} + c_0 - \vert x\vert^{2k}\big)\ e^{2t\phi_{\varepsilon}(x)}\vert\psi_j(x)\vert^2\,\mathrm dx.
\end{multline}
On the one hand, since $0\le \sigma\le1$, there exist some positive constants $0<t_0\le1$ and $c_1>0$ such that
$$\forall t\in[0,t_0], \forall x\in\mathbb R^n,\quad c_0t\langle x\rangle^{2\sigma k} + c_0 - \vert x\vert^{2k}\le c_1,$$
and we obtain the following upper bound
\begin{multline}\label{21022020E1}
	\int_{\vert x\vert^{2k}\le M\lambda}\big(\lambda + c_0t\langle x\rangle^{2\sigma k} + c_0 - \vert x\vert^{2k}\big)\ e^{2t\phi_{\varepsilon}(x)}\vert\psi_j(x)\vert^2\,\mathrm dx. \\[5pt]
	\le(\lambda + c_1)\int_{\vert x\vert^{2k}\le M\lambda} e^{2t\phi_{\varepsilon}(x)}\vert\psi_j(x)\vert^2\,\mathrm dx.
\end{multline}
On the other hand, since $0\le\sigma\le1$ anew, we notice that there exist other positive constants $0<t_1\le t_0$, $r_0\gg1$ and $c_2>0$ such that for all $0\le t\le t_1$ and $x\in\mathbb R^n$ satisfying $\vert x\vert^{2k}\geq r_0$,
$$\vert x\vert^{2k} - c_0t\langle x\rangle^{2\sigma k} - c_0\geq c_2\vert x\vert^{2k}.$$
As a consequence, if the large positive constant $M\gg1$ satisfies $M\lambda_0\geq r_0$, with $\lambda_0>0$ the smallest eigenvalue of the operator $H_{k,m}$, we have $M\lambda\geq M\lambda_0\geq r_0$, and therefore,
\begin{align}\label{19022020E3}
	&\ \int_{\vert x\vert^{2k}>M\lambda}\big(\vert x\vert^{2k} - c_0t\langle x\rangle^{2\sigma k} - c_0 - \lambda\big)\ e^{2t\phi_{\varepsilon}(x)}\vert\psi_j(x)\vert^2\,\mathrm dx \\[5pt]
	\geq &\ \int_{\vert x\vert^{2k}>M\lambda}\big(c_2\vert x\vert^{2k} - \lambda\big)\ e^{2t\phi_{\varepsilon}(x)}\vert\psi_j(x)\vert^2\,\mathrm dx \nonumber \\[5pt]
	\geq &\ (c_2M - 1)\lambda\int_{\vert x\vert^{2k}>M\lambda}e^{2t\phi_{\varepsilon}(x)}\vert\psi_j(x)\vert^2\,\mathrm dx. \nonumber
\end{align}
The constant $M\gg1$ can be chosen large enough so that $c_3>0$, where we set $c_3 = c_2M-1$, and its value is now fixed. We deduce from \eqref{21022020E2}, \eqref{21022020E1} and \eqref{19022020E3} that for all $j\geq0$, $0<\varepsilon\le1$ and $0\le t\le t_1$,
\begin{multline*}
	\int_{\vert x\vert^{2k}>M\lambda}e^{2t\phi_{\varepsilon}(x)}\vert\psi_j(x)\vert^2\,\mathrm dx
	\le\frac1{c_3\lambda}\big\Vert H_{k,m}\psi_j-\lambda\psi_j\big\Vert_{L^2(\mathbb R^n)}\big\Vert e^{2t\phi_{\varepsilon}}\psi_j\big\Vert_{L^2(\mathbb R^n)} \\[5pt]
	+ \frac1{c_3}\Big(1 + \frac{c_1}{\lambda}\Big)\int_{\vert x\vert^{2k}\le M\lambda} e^{2t\phi_{\varepsilon}(x)}\vert\psi_j(x)\vert^2\,\mathrm dx.
\end{multline*}
Given that
$$\big\Vert e^{t\phi_{\varepsilon}}\psi_j\big\Vert^2_{L^2(\mathbb R^n)} = \int_{\vert x\vert^{2k}>M\lambda}e^{2t\phi_{\varepsilon}(x)}\vert\psi_j(x)\vert^2\,\mathrm dx
+ \int_{\vert x\vert^{2k}\le M\lambda}e^{2t\phi_{\varepsilon}(x)}\vert\psi_j(x)\vert^2\,\mathrm dx,$$
the above estimate implies that for all $j\geq0$, $0<\varepsilon\le1$ and $0\le t\le t_1$,
\begin{multline}\label{21022020E5}
	\big\Vert e^{t\phi_{\varepsilon}}\psi_j\big\Vert^2_{L^2(\mathbb R^n)}
	\le\frac1{c_3\lambda}\big\Vert H_{k,m}\psi_j-\lambda\psi_j\big\Vert_{L^2(\mathbb R^n)}\big\Vert e^{2t\phi_{\varepsilon}}\psi_j\big\Vert_{L^2(\mathbb R^n)} \\[5pt]
	+ \Big(\frac1{c_3}\Big(1 + \frac{c_1}{\lambda}\Big)+1\Big)\int_{\vert x\vert^{2k}\le M\lambda} e^{2t\phi_{\varepsilon}(x)}\vert\psi_j(x)\vert^2\,\mathrm dx.
\end{multline}
We need to be more precise concerning the second term of the right-hand side of the above estimate. First, it follows from the definition of the function $\chi$ and the definitions \eqref{03022020E1} of the functions $\chi_{\varepsilon}$ and $\phi_{\varepsilon}$ that there exists a positive constant $c_4>0$ such that for all $\varepsilon>0$ and $x\in\mathbb R^n$,
$$\phi_{\varepsilon}(x) = \chi_{\varepsilon}(\phi(x)) = \frac1{\varepsilon}\chi(\varepsilon\phi(x))\le c_4\phi(x).$$
Moreover, Minkowski's inequality and the classical inequality 
$$\forall a,b\geq0,\forall q>0,\quad (a+b)^q\le 2^{(q-1)_+}(a^q+b^q)\quad\text{with}\quad (q-1)_+ = \max(q-1,0),$$
imply that for all $x\in\mathbb R^n$,
$$\phi(x) = \langle x\rangle^{\sigma(1+\frac km)}\le(1+\vert x\vert)^{\sigma(1+\frac km)} \le2^{(\sigma(1+\frac km)-1)_+}\big(1+\vert x\vert^{\sigma(1+\frac km)}\big).$$
We deduce that for all $0<\varepsilon\le1$ and $x\in\mathbb R^n$ satisfying $\vert x\vert^{2k}\le M\lambda$,
$$\phi_{\varepsilon}(x)\le 2^{(\sigma(1+\frac km)-1)_+}c_4\big(1+(M\lambda)^{\frac{\sigma}{2k}+\frac{\sigma}{2m}}\big).$$
In addition, still denoting by $\lambda_0>0$ the smallest eigenvalue of the operator $H_{k,m}$, the following estimate holds
$$\frac1{c_3}\Big(1 + \frac{c_1}{\lambda}\Big)\le\frac1{c_3}\Big(1+\frac{c_1}{\lambda_0}\Big).$$
We therefore deduce that there exist some positive constants $c_5,c_6>0$ such that for all $j\geq0$, $0<\varepsilon\le1$ and $0\le t\le t_1$,
$$\Big(\frac1{c_3}\Big(1 + \frac{c_1}{\lambda}\Big)+1\Big)\int_{\vert x\vert^{2k}\le M\lambda} e^{2t\phi_{\varepsilon}(x)}\vert\psi_j(x)\vert^2\,\mathrm dx
\le c_5e^{c_6t\lambda^{\sigma(\frac1{2k}+\frac1{2m})}} \Vert\psi_j\Vert^2_{L^2(\mathbb R^n)},$$
and then, using anew that $\lambda\geq\lambda_0>0$, we get
$$\big\Vert e^{t\phi_{\varepsilon}}\psi_j\big\Vert^2_{L^2(\mathbb R^n)}\le\frac1{c_3\lambda_0}\big\Vert H_{k,m}\psi_j-\lambda\psi_j\big\Vert_{L^2(\mathbb R^n)}\big\Vert e^{2t\phi_{\varepsilon}}\psi_j\big\Vert_{L^2(\mathbb R^n)} + c_5e^{c_6t\lambda^{\sigma(\frac1{2k}+\frac1{2m})}} \Vert\psi_j\Vert^2_{L^2(\mathbb R^n)}.$$
By passing to the limit $j\rightarrow+\infty$ in this estimate while using \eqref{21022020E6}, and recalling that $H_{k,m}\psi = \lambda\psi$, we obtain the following inequality for all $0<\varepsilon\le1$ and $0\le t\le t_1$,
$$\big\Vert e^{t\phi_{\varepsilon}}\psi\big\Vert^2_{L^2(\mathbb R^n)}\le c_5e^{c_6t\lambda^{\sigma(\frac1{2k}+\frac1{2m})}} \Vert\psi\Vert^2_{L^2(\mathbb R^n)}.$$
The estimate \eqref{21022020E3} is then a consequence of Fatou's lemma, since we get that for all $0\le t\le t_1$,
\begin{align*}
	\big\Vert e^{t\phi}\psi\big\Vert^2_{L^2(\mathbb R^n)} = \big\Vert\liminf_{\varepsilon\rightarrow0} e^{t\phi_{\varepsilon}}\psi\big\Vert^2_{L^2(\mathbb R^n)}
	& \le \liminf_{\varepsilon\rightarrow0} \big\Vert e^{t\phi_{\varepsilon}}\psi\big\Vert^2_{L^2(\mathbb R^n)} \\[5pt]
	& \le c_5e^{c_6t\lambda^{\sigma(\frac1{2k}+\frac1{2m})}} \Vert\psi\Vert^2_{L^2(\mathbb R^n)}.
\end{align*}
Notice that both constants $c_5>0$ and $c_6>0$ do not depend on the eigenfunction $\psi\in L^2(\mathbb R^n)$ nor on the associated eigenvalue $\lambda>0$. This ends the proof of the estimate \eqref{21022020E3}.

\section{Smoothing properties of the associated semigroups}\label{smoothing}

Let $k,m\geq1$ be some positive integers, $s>0$ be a positive real number and $H^s_{k,m}$ be the associated fractional anisotropic Shubin operator defined in \eqref{20112020E3} and equipped with the domain \eqref{20112020E4}. The aim of this section is to study the smoothing properties enjoyed by the evolution operators generated by the operator $H^s_{k,m}$ on $L^2(\mathbb R^n)$.

\subsection{The general case}\label{general} This subsection is devoted to derive Corollary \ref{02102020C1} from Theorem \ref{19022020T1}. Let $(\psi_j)_j$ be a Hilbert basis of $L^2(\mathbb R^n)$ composed of eigenfunctions of the operator $H_{k,m}$ and $\lambda_j>0$ the eigenvalue associated with the eigenfunction $\psi_j$ for all $j\geq0$. Moreover, let $c_1,c_2>0$ and $T>0$ be the positive constants given by Theorem \ref{19022020T1}. We first prove that there exists a positive constant $c>0$ such that for all $0<t<T$ and $g\in L^2(\mathbb R^n)$,
\begin{equation}\label{02102020E3}
	\big\Vert e^{c_1t\langle x\rangle^{\frac1{\nu_{s,k,m}}}}(e^{-(1+c_2)tH^s_{k,m}}g)\big\Vert_{L^2(\mathbb R^n)} \le\frac c{t^{\frac{n(k+m)}{2skm}}}\Vert g\Vert_{L^2(\mathbb R^n)},
\end{equation}
the regularity exponent $\nu_{s,k,m}>0$ being given by
$$\nu_{s,k,m} = \max\bigg(\frac1{2sk},\frac m{k+m}\bigg).$$
In the following, the operator $H^s_{k,m}$ and the constant $\nu_{s,k,m}$ will simply be denoted $H^s$ and $\nu_s$ respectively in order to alleviate the writing. The strategy to obtain this estimate is to prove that there exists a positive constant $c>0$ such that for all $0<t<T$ and $g\in L^2(\mathbb R^n)$,
\begin{equation}\label{08102020E1}
	\sum_{j=0}^{+\infty}\big\Vert\langle e^{-(1+c_2)tH^s}g,\psi_j\rangle_{L^2(\mathbb R^n)}e^{c_1t\langle x\rangle^{\frac1{\nu_s}}}\psi_j\big\Vert_{L^2(\mathbb R^n)}\le\frac c{t^{\frac{n(k+m)}{2skm}}}\Vert g\Vert_{L^2(\mathbb R^n)}.
\end{equation}
Since the normed vector space $L^2(\mathbb R^n)$ is a Banach space, the above inequality implies that for all $0<t<T$ and $g\in L^2(\mathbb R^n)$,
$$\sum_{j=0}^{+\infty}\langle e^{-(1+c_2)tH^s}g,\psi_j\rangle_{L^2(\mathbb R^n)}e^{c_1t\langle x\rangle^{\frac1{\nu_s}}}\psi_j = e^{c_1t\langle x\rangle^{\frac1{\nu_s}}}(e^{-(1+c_2)tH^s}g)\in L^2(\mathbb R^n),$$
and also that the estimate \eqref{02102020E3} holds. We therefore focus on obtaining \eqref{08102020E1}. To that end, we begin by noticing that the exponent $1/\nu_s$ can be written in the following way
\begin{equation}\label{07122022E1}
	\frac1{\nu_s} = \sigma_s\bigg(1+\frac km\bigg)\quad\text{with}\quad\sigma_s = \min\bigg(\frac{2skm}{k+m},1\bigg)\in[0,1].
\end{equation}
By using that
$$\sigma_s\bigg(\frac1{2k}+\frac1{2m}\bigg) = \min\bigg(s,\frac1{2k}+\frac1{2m}\bigg)\le s,$$
and the fact that $\lim_{+\infty}\lambda_j = +\infty$, we deduce that
$$\exists j_0\geq1, \forall j\geq j_0,\quad\lambda_j^s\geq\lambda_j^{\sigma_s(\frac1{2k}+\frac1{2m})}.$$
Cauchy-Schwarz' inequality and Theorem \ref{19022020T1} then imply that for all $0<t<T$ and $g\in L^2(\mathbb R^n)$,
\begin{align*}
	&\ \sum_{j=0}^{+\infty}\big\Vert\langle e^{-(1+c_2)tH^s}g,\psi_j\rangle_{L^2(\mathbb R^n)}e^{c_1t\langle x\rangle^{\frac1{\nu_s}}}\psi_j\big\Vert_{L^2(\mathbb R^n)} \\[5pt]
	\le &\ \bigg(\sum_{j=0}^{+\infty}e^{-(1+c_2)t\lambda^s_j}\big\Vert e^{c_1t\langle x\rangle^{\frac1{\nu_s}}}\psi_j\big\Vert_{L^2(\mathbb R^n)}\bigg)\Vert g\Vert_{L^2(\mathbb R^n)} \\[5pt]
	\le &\ c_2\bigg(\sum_{j=0}^{+\infty}e^{-(1+c_2)t\lambda^s_j}e^{c_2t\lambda_j^{\sigma_s(\frac1{2k}+\frac1{2m})}}\bigg)\Vert g\Vert_{L^2(\mathbb R^n)}
	\le c_0c_2\bigg(\sum_{j=0}^{+\infty}e^{-t\lambda^s_j}\bigg)\Vert g\Vert_{L^2(\mathbb R^n)},
\end{align*}
where we set
$$c_0 = \max_{0\le j\le j_0-1}\sup_{0\le t\le T}e^{-c_2t(\lambda_j^s-\lambda_j^{\sigma_s(\frac1{2k}+\frac1{2m})})}>0.$$
Moreover, the result \cite{BBR} (Chapter 2, Corollary 3.1) implies that the asymptotic behavior of the eigenvalues $\lambda_j$ is the following
$$\lambda_j\underset{j\rightarrow+\infty}{\sim} c_{k,m}j^{\frac{2km}{n(k+m)}},$$
where $c_{k,m}>0$ is a positive constant only depending on the positive integers $k,m\geq1$. Thus, there exists a positive constant ${c_0}'>0$ such that for all $0<t<T$,
$$\sum_{j=0}^{+\infty}e^{-t\lambda^s_j}\le\sum_{j=0}^{+\infty}e^{-{c_0}'tj^{\frac{2skm}{n(k+m)}}}\le\int_{-1}^{+\infty}e^{-{c_0}'tx^{\frac{2skm}{n(k+m)}}}\,\mathrm dx.$$
We deduce that there exists another positive constant $c>0$ such that for all $0<t<T$ and $g\in L^2(\mathbb R^n)$,
$$\sum_{j=0}^{+\infty}\big\Vert\langle e^{-(1+c_2)tH^s}g,\psi_j\rangle_{L^2(\mathbb R^n)}e^{c_1t\langle x\rangle^{\frac1{\nu_s}}}\psi_j\big\Vert_{L^2(\mathbb R^n)}\le\frac c{t^{\frac{n(k+m)}{2skm}}}\Vert g\Vert_{L^2(\mathbb R^n)}.$$
This ends the proof of the estimate \eqref{08102020E1} and, therefore, the one of \eqref{02102020E3}. Proceeding in the very same way, we get that for all $0<t<T$ and $g\in L^2(\mathbb R^n)$,
$$\big\Vert e^{c_1t\langle D_x\rangle^{\frac1{\mu_{s,k,m}}}}(e^{-(1+c_2)tH^s_{k,m}}g)\big\Vert_{L^2(\mathbb R^n)} \le\frac c{t^{\frac{n(k+m)}{2skm}}}\Vert g\Vert_{L^2(\mathbb R^n)},$$
the regularity exponent $\mu_{s,k,m}>0$ being this time given by
$$\mu_{s,k,m} = \max\bigg(\frac1{2sm},\frac k{k+m}\bigg).$$
Indeed, the proof goes the same way as before, since we have
$$\frac1{\mu_{s,k,m}} = \min\bigg(2sm,1+\frac mk\bigg) = \sigma_s\bigg(1+\frac mk\bigg),$$
where $\sigma_s\in[0,1]$ is the same as in \eqref{07122022E1}. The proof of Corollary \ref{02102020C1} is now ended.

\subsection{The non-fractional case} To end this section, we improve the Gelfand-Shilov estimates given by Corollary \ref{02102020C1} in the non-fractional case, that is, when $s=1$, by proving Theorem \ref{14102020E1}. In the following, we will use steps or results already present in the proof of Theorem \ref{19022020T1} in Section \ref{agmon}. First, as we have already noticed, it is sufficient to obtain the existence of some positive constants $c_1>0$, $c_2>0$ and $T>0$ such that for all $0\le t\le T$ and $g\in L^2(\mathbb R^n)$,
\begin{equation}\label{05122019E3}
	\big\Vert e^{c_1t\langle x\rangle^{1+\frac km}}(e^{-tH_{k,m}}g)\big\Vert_{L^2(\mathbb R^n)}\le c_2\Vert g\Vert_{L^2(\mathbb R^n)}.
\end{equation}
To that end, we consider $c>0$ a positive constant whose value will be chosen later and the smooth function $\phi\in C^{\infty}(\mathbb R^n,\mathbb R)$ defined for all $x\in\mathbb R^n$ by
$$\phi(x) = c\langle x\rangle^{1+\frac km}.$$
In order to work with a smooth compactly support approximation of the function $\phi$, we consider the family of function $(\chi_{\varepsilon})_{\varepsilon>0}$ defined in \eqref{03022020E1} anew and we set $\phi_{\varepsilon} = \chi_{\varepsilon}\circ\phi$. In contrast to Subsection \ref{general}, where we used a Hilbert basis composed of eigenfunctions of the operator $H_{k,m}$, the strategy adopted here consists in directly manipulating the semigroup $(e^{-tH_{k,m}})_{t\geq0}$ through the following time-dependent functionals
\begin{equation}\label{16112020E2}
	F_{\varepsilon}(t) = \big\langle e^{-tH_{k,m}}g,e^{2t\phi_{\varepsilon}}e^{-tH_{k,m}}g\big\rangle_{L^2(\mathbb R^n)},\quad \varepsilon>0,\ t\geq0,\ g\in L^2(\mathbb R^n).
\end{equation}
However, in oder to justify that the functionals $F_{\varepsilon}$ are well-defined on $[0,+\infty)$, we need to call on Corollary \ref{02102020C1} proven in Subsection \ref{general}, which states in particular that 
$$\forall t>0, \forall g\in L^2(\mathbb R^n),\quad e^{-tH_{k,m}}g\in\mathscr S(\mathbb R^n).$$
Moreover, these functionals are differentiable on $(0,+\infty)$ and their derivatives are given for all $\varepsilon>0$, $t>0$ and $g\in L^2(\mathbb R^n)$ by
$$F_{\varepsilon}'(t) = -2\big\langle H_{k,m}e^{-tH_{k,m}}g,e^{2t\phi_{\varepsilon}}e^{-tH_{k,m}}g\big\rangle_{L^2(\mathbb R^n)} + 2\big\langle e^{-tH_{k,m}}g,\phi_{\varepsilon}e^{2t\phi_{\varepsilon}}e^{-tH_{k,m}}g\big\rangle_{L^2(\mathbb R^n)}.$$
By using the definition of the operator $H_{k,m}$, we can expand the above equality for all $\varepsilon>0$, $t>0$ and $g\in L^2(\mathbb R^n)$,
\begin{multline*}
	F'_{\varepsilon}(t) = -2\big\langle\vert x\vert^{2k}e^{-tH_{k,m}}g,e^{2t\phi_{\varepsilon}}e^{-tH_{k,m}}g\big\rangle_{L^2(\mathbb R^n)} 
- 2\big\langle(-\Delta)^m e^{-tH_{k,m}}g,e^{2t\phi_{\varepsilon}}e^{-tH_{k,m}}g\big\rangle_{L^2(\mathbb R^n)} \\[5pt]
+ 2\big\langle e^{-tH_{k,m}}g,\phi_{\varepsilon}e^{2t\phi_{\varepsilon}}e^{-tH_{k,m}}g\big\rangle_{L^2(\mathbb R^n)}.
\end{multline*}
We recall from Proposition \ref{19022020P1} (in the particular case where $\sigma=1$) that there exists a positive constant $c_0>0$ depending on the function $\phi$ such that for all $0<\varepsilon\le1$, $0\le t\le1$ and $v\in\mathscr S(\mathbb R^n)$,
$$\big\langle e^{t\phi_{\varepsilon}}(-\Delta)^m(e^{-t\phi_{\varepsilon}}v),v\big\rangle_{L^2(\mathbb R^n)}
+c_0\Big(\Vert v\Vert^2_{L^2(\mathbb R^n)} + t\big\Vert\langle x\rangle^kv\big\Vert^2_{L^2(\mathbb R^n)}\Big)\geq0.$$
By applying this estimate to the Schwartz functions $v= e^{2t\phi_{\varepsilon}}e^{-tH}g$, we deduce that for all $0<\varepsilon\le1$, $0<t\le1$ and $g\in L^2(\mathbb R^n)$,  
\begin{multline*}
	F'_{\varepsilon}(t) \le - 2\big\langle\vert x\vert^{2k}e^{-tH_{k,m}}g,e^{2t\phi_{\varepsilon}}e^{-tH_{k,m}}g\big\rangle_{L^2(\mathbb R^n)} 
	+ 2c_0t\big\Vert\langle x\rangle^ke^{t\phi_{\varepsilon}}e^{-tH_{k,m}}g\big\Vert^2_{L^2(\mathbb R^n)} \\[5pt]
	+ 2c_0\big\Vert e^{t\phi_{\varepsilon}}e^{-tH_{k,m}}g\big\Vert^2_{L^2(\mathbb R^n)} + 2\big\langle e^{-tH_{k,m}}g,\phi_{\varepsilon}e^{2t\phi_{\varepsilon}}e^{-tH_{k,m}}g\big\rangle_{L^2(\mathbb R^n)}.
\end{multline*}
This estimate also takes the following integral form
$$F'_{\varepsilon}(t)\le2\int_{\mathbb R^n}\big(c_0 + \phi_{\varepsilon}(x) + c_0t\langle x\rangle^{2k} - \vert x\vert^{2k}\big)\ e^{2t\phi_{\varepsilon}(x)}\big\vert(e^{-tH_{k,m}}g)(x)\big\vert^2\,\mathrm dx.$$
We deduce from the definition \eqref{03022020E1} of the functions $\chi_{\varepsilon}$ that there exists a positive constant $c_1>0$ such that for all $\varepsilon>0$ and $x\in\mathbb R^n$,
$$\chi_{\varepsilon}(x)=\frac1{\varepsilon}\chi(\varepsilon x)\le c_1x.$$
In particular, we get that for all $\varepsilon>0$ and $x\in\mathbb R^n$,
\begin{equation}\label{16112020E1}
	\phi_{\varepsilon}(x) = \chi_{\varepsilon}(\phi(x))\le c_1\phi(x).
\end{equation}
This implies that for all $0<\varepsilon\le1$, $0<t\le1$ and $g\in L^2(\mathbb R^n)$,
\begin{equation}\label{05122019E9}
	F'_{\varepsilon}(t)\le2\int_{\mathbb R^n}\big(c_0 + c_1\phi(x) + c_0t\langle x\rangle^{2k} - \vert x\vert^{2k}\big)\ e^{2t\phi_{\varepsilon}(x)}\big\vert(e^{-tH_{k,m}}g)(x)\big\vert^2\ \mathrm dx.
\end{equation}
We will now distinguish two regions in $\mathbb R^n$, namely in a neighborhood and far from the origin, in order to control the term
$$c_0 + c_1\phi(x) + c_0t\langle x\rangle^{2k} - \vert x\vert^{2k}.$$
Let $r_0>0$ be a radius whose value will be chosen later. On the one hand, since the above term is continuous with respect to both variables $t$ and $x$, we get that there exists a positive constant $M_{r_0}>0$, depending on $r_0$, such that for all $0\le t\le 1$ and $x\in\mathbb R^n$ satisfying $\vert x\vert\le r_0$,
\begin{equation}\label{05122019E6}
	c_0 + c_1\phi(x) + c_0t\langle x\rangle^{2k} - \vert x\vert^{2k}\le M_{r_0}.
\end{equation}
On the other hand, by choosing $c<1/c_1$, the value of the constant $r_0\gg1$ can be adjusted large enough so that there exists a positive constant $t_0>0$ such that for all $0\le t\le t_0$ and $x\in\mathbb R^n$ satisfying $\vert x\vert\geq r_0$,
\begin{equation}\label{05122019E7}
	c_0 + c_1\phi(x) + c_0t\langle x\rangle^{2k} - \vert x\vert^{2k}\le 0.
\end{equation}
Indeed, notice that with this choice, $c_1c-1<0$, and that the inequality $1+k/m\le2k$ implies that for all $0\le t\le 1$,
\begin{multline*}
	c_0 + c_1\phi(x) + c_0t\langle x\rangle^{2k} - \vert x\vert^{2k}\le c_0 + c_1c\langle x\rangle^{2k} + c_0t\langle x\rangle^{2k}-\vert x\vert^{2k} \\
	\underset{\vert x\vert\rightarrow+\infty}{\sim}(c_0t+c_1c-1)\vert x\vert^{2k}.
\end{multline*}
The value of the radius $r_0\gg1$ is now fixed. We deduce from \eqref{05122019E9}, \eqref{05122019E6} and \eqref{05122019E7} that for all $\varepsilon>0$, $0<t\le t_0$ and $g\in L^2(\mathbb R^n)$, 
$$F'_{\varepsilon}(t)\le2M_{r_0}\int_{\vert x\vert\le r_0}e^{2t\phi_{\varepsilon}(x)}\big\vert(e^{-tH_{k,m}}g)(x)\big\vert^2\,\mathrm dx.$$
Moreover, by using \eqref{16112020E1} anew and the continuity of the function $\phi$, we can find a positive constant $M>0$ such that for all $0<\varepsilon\le1$, $0<t\le t_0$ and $g\in L^2(\mathbb R^n)$, 
\begin{equation}\label{13122019E2}
	F'_{\varepsilon}(t)\le M\int_{\vert x\vert\le r_0}\big\vert(e^{-tH_{k,m}}g)(x)\big\vert^2\,\mathrm dx\le M\Vert g\Vert^2_{L^2(\mathbb R^n)}.
\end{equation}
By using the definition \eqref{16112020E2} of the functionals $F_{\varepsilon}$ and integrating the above estimate, we deduce that for all $0<\varepsilon<\varepsilon_0$, $0\le t\le t_0$ and $g\in L^2(\mathbb R^n)$,
$$F_{\varepsilon}(t) = \big\Vert e^{t\phi_{\varepsilon}}e^{-tH_{k,m}}g\big\Vert^2_{L^2(\mathbb R^n)}\le (1+Mt)\Vert g\Vert^2_{L^2(\mathbb R^n)}\le(1+Mt_0)\Vert g\Vert^2_{L^2(\mathbb R^n)}.$$
Using Fatou's lemma as in the end of Section \ref{agmon} therefore ends the proof of the estimate \eqref{05122019E3} and the one of Theorem \ref{14102020E1}.

\section{A G{\aa}rding type inequality}\label{secgard}

Let $\phi$ be the smooth function defined in \eqref{05032020E1} and $(\phi_{\varepsilon})_{\varepsilon>0}$ be the family of $C^{\infty}_0(\mathbb R^n)$ functions given by \eqref{03022020E1}. This section is devoted to the proof of Proposition \ref{19022020P1}, which states that there exists a positive constant $c_0>0$ depending on the function $\phi$ such that for all $0<\varepsilon\le1$, $0\le t\le1$ and $v\in\mathscr S(\mathbb R^n)$,
\begin{equation}\label{06022020E1}
	\big\langle e^{t\phi_{\varepsilon}}(-\Delta)^m(e^{-t\phi_{\varepsilon}}v),v\big\rangle_{L^2(\mathbb R^n)}
	+c_0\Big(\Vert v\Vert^2_{L^2(\mathbb R^n)} + t\big\Vert\langle x\rangle^{\sigma k}v\big\Vert^2_{L^2(\mathbb R^n)}\Big)\geq0.
\end{equation}
We recall that for all $\varepsilon>0$ and $x\in\mathbb R^n$.
$$\phi(x) = \langle x\rangle^{\sigma(1+\frac km)},\quad\phi_{\varepsilon}(x) = (\chi_{\varepsilon}\circ\phi)(x)\quad\text{with}\quad\chi_{\varepsilon}(x) = \frac1{\varepsilon}\chi(\varepsilon x),$$
where $0\le\sigma\le1$ is a non-negative real number, $k,m\geq1$ are positive integers and $\chi\in C^{\infty}_0(\mathbb R,\mathbb R)$ is a cut-off odd function satisfying that $\chi(x) = x$ for all $0\le x\le1$, $\chi(x) = 0$ when $x\geq2$ and $\chi(x)\geq0$ for all $x\geq0$. A natural approach to prove the estimate \eqref{06022020E1} would be to compute explicitly the scalar product $\langle e^{t\phi_{\varepsilon}}(-\Delta)^m(e^{-t\phi_{\varepsilon}}v),v\rangle_{L^2(\mathbb R^n)}$ by using Leibniz' and Fa\'a di Bruno's formulas, and to manage all the terms appearing. However, due to the form of the general Fa\'a di Bruno's formula, these terms would not have a manageable form, and their study would be difficult to tackle, especially since we have to take into account the parameters $0\le t\le 1$ and $0<\varepsilon\le1$. This is the reason why we will use technics from symbolic calculus, which is particularly a well-adapted framework to prove G{\aa}rding type inequalities like \eqref{06022020E1}.

The first step consists in providing a more manageable form for the operators $e^{t\phi_{\varepsilon}}(-\Delta)^me^{-t\phi_{\varepsilon}}$ involved in the above estimate. To that end, we need to introduce the following commutator notation for all compactly supported smooth function $\chi\in C^{\infty}_0(\mathbb R^n)$ and all differential operator $P$,
$$[\chi,P] = \chi P - P\chi.$$
We also define $\ad^0_{\chi}(-\Delta)^m = (-\Delta)^m$ and for all $j\geq0$,
$$\ad^{j+1}_{\chi}(-\Delta)^m = [\chi,\ad^j_{\chi}(-\Delta)^m].$$

\begin{lem}\label{18082020L1} For all compactly supported smooth function $\chi\in C^{\infty}_0(\mathbb R^n)$, the following formula between differential operators holds,
\begin{equation}\label{18082020E2}
	\forall t\in\mathbb R,\quad e^{t\chi}(-\Delta)^me^{-t\chi} = \sum_{j=0}^{2m}\frac{t^j}{j!}\ad^j_{\chi}(-\Delta)^m.
\end{equation}
\end{lem}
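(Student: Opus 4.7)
The identity \eqref{18082020E2} is the finite-sum incarnation of the Hadamard conjugation formula $e^{t\chi}Pe^{-t\chi}=e^{t\ad_\chi}P$, which in our setting truncates because $\ad_\chi$ lowers the differential order by one. My plan is to reduce the equality to a linear ODE on a suitable space of differential operators, and then exploit the truncation.

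Fix $v\in\mathscr S(\mathbb R^n)$ and define the operator-valued function $F(t):=e^{t\chi}(-\Delta)^m e^{-t\chi}$. Since the multiplication operators $\chi$ and $e^{-t\chi}$ commute, one checks directly that $F$ is differentiable in $t$ with
$$F'(t)=\chi F(t)-e^{t\chi}(-\Delta)^m\chi e^{-t\chi}=\chi F(t)-F(t)\chi=\ad_\chi F(t),$$
together with the initial value $F(0)=(-\Delta)^m$. On the other hand, the key structural ingredient is the following order-lowering property: for any differential operator $P$ of order $d$ with smooth coefficients, $\ad_\chi(P)=[\chi,P]$ is a differential operator of order at most $d-1$. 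Indeed, this follows by induction on $d$ from the base case $[\chi,\partial_{x_i}]=-(\partial_{x_i}\chi)$, which is multiplication by a smooth (actually $C^\infty_0$) function, combined with the Leibniz-type identity $[\chi,PQ]=[\chi,P]Q+P[\chi,Q]$. Applied iteratively to $(-\Delta)^m$, this shows $\ad^j_\chi(-\Delta)^m$ has order at most $2m-j$, and in particular
$$\ad^{2m+1}_\chi(-\Delta)^m=0.$$

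Now set $G(t):=\sum_{j=0}^{2m}\frac{t^j}{j!}\ad^j_\chi(-\Delta)^m$, which is the right-hand side of \eqref{18082020E2}. Using the vanishing of $\ad^{2m+1}_\chi(-\Delta)^m$, a direct index shift gives
$$G'(t)=\sum_{j=1}^{2m}\frac{t^{j-1}}{(j-1)!}\ad^j_\chi(-\Delta)^m=\sum_{j=0}^{2m}\frac{t^j}{j!}\ad^{j+1}_\chi(-\Delta)^m=\ad_\chi G(t),$$
and clearly $G(0)=(-\Delta)^m$. Both $F$ and $G$ therefore satisfy the same first-order linear ODE $X'(t)=\ad_\chi X(t)$ with the same initial datum, when read as identities between differential operators acting on $\mathscr S(\mathbb R^n)$. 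Uniqueness for this ODE (which can be proved pointwise on any fixed $v\in\mathscr S(\mathbb R^n)$ by classical Cauchy--Lipschitz arguments applied to each component of the coefficient functions) forces $F\equiv G$, establishing \eqref{18082020E2}. The only nontrivial step is the order-lowering property of $\ad_\chi$, responsible for the truncation of the sum at $j=2m$; everything else is bookkeeping.
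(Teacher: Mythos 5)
Your argument is correct and rests on the same two pillars as the paper's proof: the identity $F'(t)=\ad_\chi F(t)$ for $F(t)=e^{t\chi}(-\Delta)^m e^{-t\chi}$, which holds because $\chi$ commutes with $e^{\pm t\chi}$, and the order-lowering property of $\ad_\chi$ giving $\ad_\chi^{2m+1}(-\Delta)^m=0$. Where you diverge from the paper is in how the equality is concluded. The paper iterates the derivative identity to get $\partial^j_t F(t)=e^{t\chi}\ad^j_\chi(-\Delta)^m e^{-t\chi}$, notes that this vanishes for $j\geq 2m+1$, and invokes Taylor's formula: $F$ is an operator-valued polynomial of degree at most $2m$ in $t$ and therefore equals its Taylor expansion at $0$, which is exactly the right-hand side. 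You instead set up an operator-valued linear ODE satisfied by both sides and appeal to uniqueness. Both routes are valid incarnations of the Hadamard lemma with truncation; the Taylor route is a touch more economical since, once the higher derivatives are seen to vanish, no uniqueness argument is required.

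The one caveat concerns your parenthetical justification of uniqueness. If you fix $v\in\mathscr S(\mathbb R^n)$ and set $h(t)=(F(t)-G(t))v$, the ODE does not close on $h$ alone: $\ad_\chi(F(t)-G(t))v=\chi h(t)-(F(t)-G(t))(\chi v)$ brings in the action of the difference operator on $\chi v$, a different test function, so a Cauchy--Lipschitz argument ``pointwise in $v$'' cannot be run directly. What does close is the finite linear system, for each fixed $x\in\mathbb R^n$, satisfied by the coefficients $a_\alpha(t,x)$ of the difference operator (of order at most $2m$): as your Leibniz computation shows, $\ad_\chi$ maps a differential operator with coefficients $(a_\alpha)$ to one whose coefficients are linear combinations of the same $a_\alpha$ with smooth weights built from derivatives of $\chi$, and not derivatives of the $a_\alpha$ themselves; Cauchy--Lipschitz then applies coefficient-wise with zero initial data. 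Your second phrase, ``applied to each component of the coefficient functions,'' is the correct picture; the phrase ``pointwise on any fixed $v$'' should be removed.
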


\begin{proof} Let $\chi\in C^{\infty}_0(\mathbb R^n)$ be a fixed compactly supported smooth function. First notice from a straightforward induction that
\begin{equation}\label{18082020E1}
	\forall j\geq0,\forall t\in\mathbb R,\quad\partial^j_t(e^{t\chi}(-\Delta)^me^{-t\chi}) = e^{t\chi}\ad^j_{\chi}(-\Delta)^me^{-t\chi}.
\end{equation}
Moreover, the differential operator $\ad^j_{\chi}(-\Delta)^m$ is of order $\max(2m-j,0)$, which implies that the derivatives \eqref{18082020E1} are equal to zero provided $j\geq2m+1$. We therefore deduce from Taylor's formula applied to the analytic functions $t\in\mathbb R\mapsto e^{t\chi}(-\Delta)^m(e^{-t\chi}v)(x)$, with $v\in\mathscr S(\mathbb R^n)$ and $x\in\mathbb R^n$, that the equality \eqref{18082020E2} actually holds.
\end{proof}

We deduce from Lemma \ref{18082020L1} that for all $\varepsilon>0$, $t\geq0$ and $v\in\mathscr S(\mathbb R^n)$,
\begin{equation}\label{18022020E1}
	\big\langle e^{t\phi_{\varepsilon}}(-\Delta)^m(e^{-t\phi_{\varepsilon}}v),v\big\rangle_{L^2(\mathbb R^n)} = \Vert v\Vert^2_{\dot H^m(\mathbb R^n)} + \sum_{j=1}^{2m}\frac{t^j}{j!}\big\langle\ad^j_{\phi_{\varepsilon}}(-\Delta)^mv,v\big\rangle_{L^2(\mathbb R^n)}.
\end{equation}
The objective is now to control each term appearing in the above sum. Precisely, we will prove that there exists a positive constant $c>0$ such that for all $\eta>0$ there exists another positive constant $C_{\eta}>0$ such that for all $1\le j\le 2m$, $0<\varepsilon\le1$ and $v\in\mathscr S(\mathbb R^n)$,
\begin{equation}\label{18022020E2}
	\big\vert\big\langle\ad^j_{\phi_{\varepsilon}}(-\Delta)^mv,v\big\rangle_{L^2(\mathbb R^n)}\big\vert\le c\bigg(\frac1{\eta}\big\Vert\langle x\rangle^{\sigma k}v\big\Vert^2_{L^2(\mathbb R^n)} + \eta\Vert v\Vert^2_{\dot H^m(\mathbb R^n)} + C_{\eta}\Vert v\Vert^2_{L^2(\mathbb R^n)}\bigg).
\end{equation}
Adjusting the value of $0<\eta\ll1$, we derive the estimate \eqref{06022020E1} from \eqref{18022020E1} and \eqref{18022020E2}. 

As announced in the beginning of this section, the strategy to obtain \eqref{18022020E2} is to use results from the theory of symbolic calculus, of which we now recall some basic notions and notations. Given $\Phi,\Psi\in C^0(\mathbb R^{2n})$ some sub-linear and temperate weights, and $M\in C^0(\mathbb R^{2n})$ another temperate weight, we define the symbol class $S(M ; \Phi,\Psi)$ as the set of all smooth functions $a\in C^{\infty}(\mathbb R^{2n})$ satisfying that for all $(\alpha,\beta)\in\mathbb N^{2n}$, there exists a positive constant $c_{\alpha,\beta}>0$ such that for all $(x,\xi)\in\mathbb R^{2n}$,
\begin{equation}\label{16092020E1}
	\vert(\partial^{\alpha}_x\partial^{\beta}_{\xi}a)(x,\xi)\vert\le c_{\alpha,\beta}M(x,\xi)\Psi(x,\xi)^{-\vert\alpha\vert}\Phi(x,\xi)^{-\vert\beta\vert}.
\end{equation}
We refer to \cite{MR2668420} (page 19) where the notions of sub-linear and temperate weights are defined. We also recall from the very same reference that examples of temperate weights are given by $\langle x\rangle^m$ or $\langle\xi\rangle^m$, seen as functions of $\mathbb R^{2n}$, with $m\in\mathbb R$. Associated to any $a\in S(M ; \Phi,\Psi)$ is the pseudodifferential operator $a^w$ defined by the Weyl quantization of the symbol $a$, that is, formally,
$$(a^wu)(x) = \frac1{(2\pi)^n}\iint_{\mathbb R^{2n}}e^{i(x-y)\cdot\xi}a\bigg(\frac{x+y}2,\xi\bigg)u(y)\,\mathrm dy\mathrm d\xi.$$
We refer to \cite{MR2668420} (Formula 1.2.3) for a rigorous definition of the operator $a^w$. According to \cite{MR2668420} (Theorem 1.2.17), for all symbols $a\in S(M_1 ; \Phi,\Psi)$ and $b\in S(M_2 ; \Phi,\Psi)$, the composition $a^wb^w = (a\ \sharp\ b)^w$ is also a pseudodifferential operator, the associated symbol $a\ \sharp\ b\in S(M_1M_2 ; \Phi,\Psi)$ being given for all $(x,\xi)\in\mathbb R^{2n}$ by
\begin{equation}\label{16112020E4}
	(a\ \sharp\ b)(x,\xi) = e^{\frac i2(D_y\cdot D_{\eta} - D_x\cdot D_{\xi})}a(x,\eta)b(y,\xi)\Big\vert_{(y,\eta) = (x,\xi)}.
\end{equation}
Moreover, we have the following asymptotic expansion
\begin{equation}\label{16112020E3}
	(a\ \sharp\ b)(x,\xi) \sim \sum_{\alpha,\beta}\frac{(-1)^{\vert\beta\vert}}{2^{\vert\alpha+\beta\vert}\alpha!\beta!}(\partial^{\alpha}_{\xi}D^{\beta}_xa)(x,\xi)(\partial^{\beta}_{\xi}D^{\alpha}_xb)(x,\xi),
\end{equation}
which is an equality when $a$ or $b$ is a polynomial as mentioned in \cite{MR2304165} (Theorem 18.5.4). This is also an exact formula when the symbol $a$ only depends on the space variable $x\in\mathbb R^n$ and the symbol $b$ is a polynomial with respect to the frequency variable $\xi\in\mathbb R^n$, see e.g. \cite{MR2599384} (Formula (2.1.28)) for an expression of the remainder. This asymptotic expansion will be widely used in the following (in fact, this will be an exact formula in the futur applications in this section).

For all $0\le j\le 2m$ and $\varepsilon>0$, we consider $\sigma_{j,\varepsilon}$ the Weyl symbol of the differential operator $\ad^j_{\phi_{\varepsilon}}(-\Delta)^m$. We now need to determine in which class the symbols $\sigma_{j,\varepsilon}$ belong. This is done thanks to the following two lemmas.

\begin{lem}\label{10092020L1} The following relation recurrence holds for all $\varepsilon>0$ and $0\le j\le 2m-1$,
\begin{equation}\label{10092020E1}
	\sigma_{j+1,\varepsilon} = -\sum_l\frac1{2^{l-1}}\sum_{\vert\alpha\vert=l}\frac1{\alpha!}(D^{\alpha}_x\phi_{\varepsilon})(\partial^{\alpha}_{\xi}\sigma_{j,\varepsilon}),
\end{equation}
the sum being taken over all the odd integers $l$ satisfying $1\le l\le 2m-j$.
\end{lem}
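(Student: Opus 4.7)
My plan is to translate the operator identity $\ad^{j+1}_{\phi_\varepsilon}(-\Delta)^m = [\phi_\varepsilon, \ad^j_{\phi_\varepsilon}(-\Delta)^m]$ into an identity for Weyl symbols, so that
$$\sigma_{j+1,\varepsilon} = \phi_\varepsilon\ \sharp\ \sigma_{j,\varepsilon} - \sigma_{j,\varepsilon}\ \sharp\ \phi_\varepsilon.$$
Here $\phi_\varepsilon$, viewed as an element of $C_0^\infty(\mathbb R^{2n})$ constant in $\xi$, is its own Weyl symbol, and $\sigma_{j,\varepsilon}$ is by definition the Weyl symbol of $\ad^j_{\phi_\varepsilon}(-\Delta)^m$. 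The rest of the argument consists in expanding each composition by the formula \eqref{16112020E3} and comparing the two series.

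Since $\phi_\varepsilon$ depends only on $x$ and $\sigma_{j,\varepsilon}$ is a polynomial in $\xi$ of degree at most $2m-j$ (this being exactly the order of the differential operator $\ad^j_{\phi_\varepsilon}(-\Delta)^m$), the asymptotic expansion \eqref{16112020E3} is an exact finite sum in both products, as noted in the excerpt right after \eqref{16112020E3}. In the expansion of $\phi_\varepsilon\ \sharp\ \sigma_{j,\varepsilon}$, the derivatives $\partial^\alpha_\xi \phi_\varepsilon$ force $\alpha=0$, leaving
$$\phi_\varepsilon\ \sharp\ \sigma_{j,\varepsilon} = \sum_{\beta}\frac{(-1)^{|\beta|}}{2^{|\beta|}\beta!}(D^\beta_x \phi_\varepsilon)(\partial^\beta_\xi \sigma_{j,\varepsilon}),$$
and symmetrically, the derivatives $\partial^\beta_\xi \phi_\varepsilon$ in $\sigma_{j,\varepsilon}\ \sharp\ \phi_\varepsilon$ force $\beta = 0$, giving
$$\sigma_{j,\varepsilon}\ \sharp\ \phi_\varepsilon = \sum_{\alpha}\frac{1}{2^{|\alpha|}\alpha!}(\partial^\alpha_\xi \sigma_{j,\varepsilon})(D^\alpha_x \phi_\varepsilon).$$

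Subtracting these two identities and regrouping by multi-index, I obtain
$$\sigma_{j+1,\varepsilon} = \sum_{\alpha}\frac{(-1)^{|\alpha|}-1}{2^{|\alpha|}\alpha!}(D^\alpha_x \phi_\varepsilon)(\partial^\alpha_\xi \sigma_{j,\varepsilon}).$$
The key observation is then that the coefficient vanishes exactly when $|\alpha|$ is even and equals $-2/(2^{|\alpha|}\alpha!) = -1/(2^{|\alpha|-1}\alpha!)$ when $|\alpha|$ is odd. Finally, as already noted, $\partial^\alpha_\xi \sigma_{j,\varepsilon} = 0$ whenever $|\alpha| > 2m-j$, so the sum truncates, and grouping terms by $l = |\alpha|$ yields precisely \eqref{10092020E1}.

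The proof is essentially a bookkeeping exercise, and the only delicate point is the justification that the Moyal expansion is an exact, finite sum in both products; this is free here from the structure of the symbols (one polynomial in $\xi$, the other independent of $\xi$), so there is no actual obstacle. Everything else reduces to carefully combining two finite sums and exploiting the parity cancellation.
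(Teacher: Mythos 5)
Your proposal is correct and follows essentially the same route as the paper's proof: translate the operator commutator into a Weyl-symbol identity, expand both Moyal products via the composition formula \eqref{16112020E3} (using that one factor depends only on $x$ and the other is polynomial in $\xi$ so the expansion is exact and finite), and exploit the parity cancellation $(-1)^{|\alpha|}-1$. The only cosmetic difference is that the paper runs an explicit induction on $j$ to carry along the fact that $\sigma_{j,\varepsilon}$ is a polynomial of degree $2m-j$ in $\xi$, whereas you obtain this directly from the order of the differential operator $\ad^j_{\phi_\varepsilon}(-\Delta)^m$, which the paper also records in the proof of Lemma \ref{18082020L1}.
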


\begin{proof} Let $\varepsilon>0$ fixed all along this proof. We establish the relation \eqref{10092020E1} by induction, also checking that each symbol $\sigma_{j,\varepsilon}$ is a polynomial of degree $2m-j$ with respect to the $\xi$ variable. Let us begin with the case $j=0$. Since $\vert\xi\vert^{2m}$ is a polynomial, the composition formula \eqref{16112020E3} shows that the Weyl symbol of the operator $\phi_{\varepsilon}(-\Delta)^m$ is exactly given by
\begin{align*}
	\phi_{\varepsilon}\ \sharp\ \vert\xi\vert^{2m}& = \sum_{l=0}^{2m}\frac1{2^l}\sum_{\vert\alpha\vert + \vert\beta\vert = l}\frac{(-1)^{\vert\beta\vert}}{\alpha!\beta!}(\partial^{\alpha}_{\xi}D^{\beta}_x\phi_{\varepsilon})(\partial^{\beta}_{\xi}D^{\alpha}_x\vert\xi\vert^{2m}) \\
	& = \sum_{l=0}^{2m}\frac1{2^l}\sum_{\vert\beta\vert = l}\frac{(-1)^l}{\beta!}(D^{\beta}_x\phi_{\varepsilon})(\partial^{\beta}_{\xi}\vert\xi\vert^{2m}).
\end{align*}
Similarly, the Weyl symbol of the operator $(-\Delta)^m\phi_{\varepsilon}$ is given by
$$\vert\xi\vert^{2m}\ \sharp\ \phi_{\varepsilon} = \sum_{l=0}^{2m}\frac1{2^l}\sum_{\vert\alpha\vert = l}\frac1{\alpha!}(\partial^{\alpha}_{\xi}\vert\xi\vert^{2m})(D^{\alpha}_x\phi_{\varepsilon}).$$
We therefore deduce the following expression for the symbol $\sigma_{1,\varepsilon}$,
\begin{align*}
	\sigma_{1,\varepsilon} = \phi_{\varepsilon}\ \sharp\ \vert\xi\vert^{2m} -  \vert\xi\vert^{2m}\ \sharp\ \phi_{\varepsilon}
	& = \sum_{l=0}^{2m}\frac1{2^l}\sum_{\vert\alpha\vert = l}\frac{(-1)^l-1}{\alpha!}(D^{\alpha}_x\phi_{\varepsilon})(\partial^{\alpha}_{\xi}\vert\xi\vert^{2m}) \\[5pt]
	& = -\sum_l\frac1{2^{l-1}}\sum_{\vert\alpha\vert = l}\frac1{\alpha!}(D^{\alpha}_x\phi_{\varepsilon})(\partial^{\alpha}_{\xi}\vert\xi\vert^{2m}),
\end{align*}
the last sum being taken over all the odd integers $l$ satisfying $1\le l\le 2m$. The above formula shows that the symbol $\sigma_{1,\varepsilon}$ is a polynomial of degree $2m-1$ with respect to the $\xi$ variable. Since $\ad^0_{\phi_{\varepsilon}}(-\Delta)^m$ is the operator $(-\Delta)^m$ by definition, the basic case of the induction is ended. We now consider $j\geq1$ and assume that formula \eqref{10092020E1} holds for $j-1$. Since the function $\phi_{\varepsilon}$ only depends on the space variable $x\in\mathbb R^n$ and that the symbol $\sigma_{j,\varepsilon}$ is a polynomial of degree $2m-j$ with respect to the frequency variable $\xi\in\mathbb R^n$ according to the induction hypothesis, we deduce the composition formula \eqref{16112020E3} anew that the symbol of the operator $\phi_{\varepsilon}\ad^j_{\phi_{\varepsilon}}(-\Delta)^m$ is given by 
$$\phi_{\varepsilon}\ \sharp\ \sigma_{j,\varepsilon} = \sum_{l=0}^{2m-j}\frac1{2^l}\sum_{\vert\beta\vert = l}\frac{(-1)^l}{\beta!}(D^{\beta}_x\phi_{\varepsilon})(\partial^{\beta}_{\xi}\sigma_{j,\varepsilon}).$$
Similarly as in the basic case, we then obtain the following formula for the symbol $\sigma_{j+1,\varepsilon}$
$$\sigma_{j+1,\varepsilon} = -\sum_l\frac1{2^{l-1}}\sum_{\vert\alpha\vert=l}\frac1{\alpha!}(D^{\alpha}_x\phi_{\varepsilon})(\partial^{\alpha}_{\xi}\sigma_{j,\varepsilon}),$$
the sum being taken over all the odd integers $l$ satisfying $1\le l\le 2m-j$, which is the relation we aimed at obtaining. Notice that it implies that the symbol $\sigma_{j+1,\varepsilon}$ is a polynomial of degree $2m-j-1$ with respect to the frequency variable $\xi\in\mathbb R^n$. This ends the proof of Lemma \ref{10092020L1}.
\end{proof}

\begin{lem}\label{11092020L1} For all $(\alpha,\beta)\in\mathbb N^{2n}$, there exists a positive constant $c_{\alpha,\beta}>0$ such that for all $1\le j\le 2m$, $0<\varepsilon\le1$ and $(x,\xi)\in\mathbb R^{2n}$,
\begin{equation}\label{11092020E4}
	\big\vert(D^{\alpha}_x\partial^{\beta}_{\xi}\sigma_{j,\varepsilon})(x,\xi)\big\vert\le c_{\alpha,\beta}\langle x\rangle^{\frac{\sigma kj}m}\langle\xi\rangle^{2m-j-\vert\beta\vert}.
\end{equation}
\end{lem}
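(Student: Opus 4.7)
The plan is to prove Lemma \ref{11092020L1} by induction on $j$, using the recurrence \eqref{10092020E1} from Lemma \ref{10092020L1} together with the Leibniz rule. The cornerstone of the argument is a preliminary uniform derivative estimate for the truncated weight $\phi_\varepsilon$.

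First I would establish that for every multi-index $\alpha$ with $\vert\alpha\vert\ge 1$ there exists $C_\alpha>0$ such that
$$\big\vert D^\alpha_x\phi_\varepsilon(x)\big\vert\le C_\alpha\langle x\rangle^{\sigma(1+\frac{k}{m})-\vert\alpha\vert},\qquad 0<\varepsilon\le 1,\ x\in\mathbb R^n.$$
By the Fa\`a di Bruno formula, $D^\alpha(\chi_\varepsilon\circ\phi)$ is a finite sum of terms $\chi_\varepsilon^{(p)}(\phi(x))\prod_{i=1}^p D^{\alpha^{(i)}}\phi(x)$ with $\sum\alpha^{(i)}=\alpha$, $\vert\alpha^{(i)}\vert\ge 1$ and $1\le p\le\vert\alpha\vert$. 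Since $\phi(x)=\langle x\rangle^{\sigma(1+k/m)}$ is smooth, the standard chain estimates yield $\vert D^{\alpha^{(i)}}\phi(x)\vert\le C\langle x\rangle^{\sigma(1+k/m)-\vert\alpha^{(i)}\vert}$. For $p=1$, $\vert\chi_\varepsilon'(y)\vert=\vert\chi'(\varepsilon y)\vert\le\Vert\chi'\Vert_\infty$ uniformly. The key point is $p\ge 2$: since $\chi$ is linear on $[0,1]$, every derivative $\chi^{(p)}$ vanishes on $[-1,1]$ and is therefore supported in $\{1\le\vert y\vert\le 2\}$, so $\chi_\varepsilon^{(p)}(\phi(x))=\varepsilon^{p-1}\chi^{(p)}(\varepsilon\phi(x))$ is nonzero only when $\varepsilon\phi(x)\asymp 1$, where $\vert\chi_\varepsilon^{(p)}(\phi(x))\vert\le C_p\phi(x)^{-(p-1)}\le C'_p\langle x\rangle^{-\sigma(1+k/m)(p-1)}$ uniformly in $\varepsilon\le 1$. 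Summing these contributions yields the claimed estimate.

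With this in hand, I would argue by induction on $j$. The base case $j=1$ follows from the explicit formula for $\sigma_{1,\varepsilon}$ derived in the proof of Lemma \ref{10092020L1}, combined with the preliminary bound above and the fact that $\partial^\alpha_\xi\vert\xi\vert^{2m}$ is a homogeneous polynomial of degree $2m-\vert\alpha\vert$; the inequality $\sigma(1+k/m)-l\le\sigma k/m$, valid whenever $\sigma\le 1\le l$, matches the target. For the inductive step $j\mapsto j+1$, I would apply $D^\gamma_x\partial^\delta_\xi$ to the recurrence \eqref{10092020E1} and expand via the Leibniz rule, producing a finite sum of contributions of the form $(D^{\alpha+\gamma_1}_x\phi_\varepsilon)(D^{\gamma_2}_x\partial^{\alpha+\delta}_\xi\sigma_{j,\varepsilon})$ with $\gamma_1+\gamma_2=\gamma$ and $l:=\vert\alpha\vert$ odd, $1\le l\le 2m-j$. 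The preliminary bound and the induction hypothesis control each such contribution by $C\langle x\rangle^{\sigma(1+k/m)-l-\vert\gamma_1\vert+\sigma k j/m}\langle\xi\rangle^{2m-j-l-\vert\delta\vert}$. Two elementary observations finish the job: since $l\ge 1$ the $\xi$-exponent is bounded by $2m-(j+1)-\vert\delta\vert$; since $\sigma\le 1\le l$ we have $\sigma(1+k/m)-l\le\sigma k/m$, so the $x$-exponent is bounded by $\sigma k(j+1)/m$.

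The main obstacle is the preliminary estimate on $D^\alpha\phi_\varepsilon$: a naive Fa\`a di Bruno expansion produces contributions involving $\Vert\chi_\varepsilon^{(p)}\Vert_\infty\asymp\varepsilon^{p-1}$ for $p\ge 2$, which carry no decay in $\varepsilon$ and, when multiplied by growing powers of $\langle x\rangle$, would be catastrophic. The cancellation comes entirely from the specific choice of $\chi$ being linear near the origin, which makes $\chi^{(p)}$ ($p\ge 2$) supported away from zero, forces $\varepsilon\phi(x)\asymp 1$ on the relevant support, and converts the bad $\varepsilon^{p-1}$ factor into the saving $\langle x\rangle^{-\sigma(1+k/m)(p-1)}$. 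Once this estimate is secured, the induction is essentially bookkeeping driven by the two numerical inequalities above.
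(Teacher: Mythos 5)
Your proof is correct and follows essentially the same route as the paper: induction on $j$ driven by the recurrence of Lemma \ref{10092020L1} and the Leibniz rule, with the two numerical inequalities $2m-j-l-\vert\delta\vert\le 2m-(j+1)-\vert\delta\vert$ (from $l\ge 1$) and $\sigma(1+\tfrac km)-l\le\tfrac{\sigma k}{m}$ (from $\sigma\le 1\le l$) closing the induction exactly as in the paper. Where you add genuine value is the preliminary uniform estimate $\vert D^\beta_x\phi_\varepsilon(x)\vert\le C_\beta\langle x\rangle^{\sigma(1+k/m)-\vert\beta\vert}$ for $\vert\beta\vert\ge 1$: the paper derives its version \eqref{11092020E5} by invoking only \eqref{11092020E1} (the sup bound $\Vert\chi_\varepsilon^{(p)}\Vert_\infty\le\Vert\chi^{(p)}\Vert_\infty$) and ``the chain rule'', and as you correctly observe, that alone is insufficient because the Fa\`a di Bruno terms with $p\ge 2$ carry $\langle x\rangle^{p\sigma(1+k/m)-\vert\beta\vert}$, which outgrows the target. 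Your resolution — exploiting the support of $\chi^{(p)}$ to convert the $\varepsilon^{p-1}$ factor into $\phi(x)^{-(p-1)}$ — is exactly the missing ingredient. One small simplification: you attribute the gain to $\chi$ being \emph{linear} near $0$, but compact support of $\chi$ (hence of $\chi^{(p)}$) already forces $\varepsilon\phi(x)\le 2$ on the support, giving $\varepsilon^{p-1}\le (2/\phi(x))^{p-1}$; linearity near the origin (which puts the support of $\chi^{(p)}$, $p\ge 2$, in $\{1\le\vert y\vert\le 2\}$) is more than is needed, though of course it also works.
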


\begin{proof} We proceed by induction, beginning with the case $j=1$. We deduce from Lemma \ref{10092020L1} that for all $\varepsilon>0$, the derivatives of the symbol $\sigma_{1,\varepsilon}$ are given by
\begin{equation}\label{11092020E3}
	D^{\alpha}_x\partial^{\beta}_{\xi}\sigma_{1,\varepsilon} = -\sum_l\frac1{2^{l-1}}\sum_{\vert\gamma\vert = l}\frac1{\alpha!}(D^{\alpha+\gamma}_x\phi_{\varepsilon})(\partial^{\beta+\gamma}_{\xi}\vert\xi\vert^{2m}),\quad (\alpha,\beta)\in\mathbb N^{2n},
\end{equation}
the sum being taken over all the odd integers $l$ satisfying $1\le l\le 2m$. On the one hand, we need to bound the derivatives of the functions $\phi_{\varepsilon}$. By definition of the function $\chi$ and the functions $\chi_{\varepsilon}$ in \eqref{03022020E1}, we get that for all $0<\varepsilon\le1$, we have $\Vert\chi_{\varepsilon}\Vert_{L^{\infty}(\mathbb R^n)}\le1$ and
\begin{equation}\label{11092020E1}
	\forall p\geq1,\quad \big\Vert\chi^{(p)}_{\varepsilon}\big\Vert_{L^{\infty}(\mathbb R^n)}\le\varepsilon^{p-1}\big\Vert\chi^{(p)}\big\Vert_{L^{\infty}(\mathbb R^n)}\le\big\Vert\chi^{(p)}\big\Vert_{L^{\infty}(\mathbb R^n)}.
\end{equation}
Notice in particular that the derivatives of the function $\chi_{\varepsilon}$ are uniformly bounded with respect to the parameter $0<\varepsilon\le1$. Moreover, the function $\phi$ is defined in \eqref{05032020E1} as a Japanese bracket, which implies that for all $\rho\in\mathbb N^n$, there exists a positive constant $c_{\rho}>0$ such that for all $x\in\mathbb R^n$,
\begin{equation}\label{11092020E2}
	\big\vert(D^{\rho}_x\phi)(x)\big\vert\le c_{\rho}\langle x\rangle^{\sigma(1+\frac km)-\vert\rho\vert}.
\end{equation}
Since $\phi_{\varepsilon} = \chi_{\varepsilon}\circ\phi$ and $0\le\sigma\le1$, we deduce from \eqref{11092020E1}, \eqref{11092020E2} and the chain rule that for all $\alpha,\gamma\in\mathbb N^n$ with $\vert\gamma\vert\geq1$, there exists a positive constant $c_{\alpha,\gamma}>0$ such that for all $0<\varepsilon\le 1$ and $x\in\mathbb R^n$,
\begin{equation}\label{11092020E5}
	\big\vert(D^{\alpha+\gamma}_x\phi_{\varepsilon})(x)\big\vert\le c_{\alpha,\gamma}\langle x\rangle^{\sigma(1+\frac km)-1}\le c_{\alpha,\gamma}\langle x\rangle^{\frac{\sigma k}m}.
\end{equation}
On the other hand, we get that for all $\beta,\gamma\in\mathbb N^n$ with $\vert\gamma\vert\geq1$, there exists a positive constant $c_{\beta,\gamma}>0$ such that for all $\xi\in\mathbb R^n$,
$$\big\vert\partial^{\beta+\gamma}_{\xi}\vert\xi\vert^{2m}\big\vert\le c_{\beta,\gamma}\langle\xi\rangle^{2m-\vert\beta\vert-\vert\gamma\vert}\le c_{\beta,\gamma}\langle\xi\rangle^{2m-1-\vert\beta\vert}.$$
In view of \eqref{11092020E3} (notice that the first sum starts at $l=1$) and the two above estimates, the induction is ended in the basic case. We now consider $2\le j\le 2m$ assume that formula \eqref{11092020E4} holds for $j-1$. We deduce from Lemma \ref{10092020L1} anew that for all $\varepsilon>0$, the derivatives of the symbol $\sigma_{j+1,\varepsilon}$ are given by
\begin{equation}\label{11092020E7}
	D^{\alpha}_x\partial^{\beta}_{\xi}\sigma_{j+1,\varepsilon} = -\sum_l\frac1{2^{l-1}}\sum_{\vert\gamma\vert = l}\frac1{\alpha!}D^{\alpha}_x((D^{\gamma}_x\phi_{\varepsilon})(\partial^{\beta+\gamma}_{\xi}\sigma_{j,\varepsilon})),\quad (\alpha,\beta)\in\mathbb N^{2n},
\end{equation}
the sum being taken over all the odd integers $l$ satisfying $1\le l\le 2m-j$. Moreover, Leibniz' formula implies that 
\begin{equation}\label{11092020E6}
	D^{\alpha}_x((D^{\gamma}_x\phi_{\varepsilon})(\partial^{\beta+\gamma}_{\xi}\sigma_{j,\varepsilon})) = \sum_{\delta\le\alpha}\binom{\alpha}{\delta}(D^{\delta+\gamma}_x\phi_{\varepsilon})(D^{\alpha-\delta}_x\partial^{\beta+\gamma}_{\xi}\sigma_{j,\varepsilon}).
\end{equation}
We deduce from \eqref{11092020E5} and the induction hypothesis that for all $(\alpha,\beta,\gamma,\delta)\in\mathbb N^{4n}$, with $\vert\gamma\vert\geq1$ and $\delta\le\gamma$, there exists a positive constant $c_{\alpha,\beta,\gamma,\delta}>0$ such that for all $0<\varepsilon\le 1$ and $(x,\xi)\in\mathbb R^{2n}$,
\begin{align*}
	\big\vert(D^{\delta+\gamma}_x\phi_{\varepsilon})(x,\xi)(D^{\alpha-\delta}_x\partial^{\beta+\gamma}_{\xi}\sigma_{j,\varepsilon})(x,\xi)\big\vert
	& \le c_{\alpha,\beta,\gamma,\delta}\langle x\rangle^{\frac{\sigma k}m}\langle x\rangle^{\frac{\sigma kj}m}\langle\xi\rangle^{2m-j-\vert\beta\vert-\vert\gamma\vert} \\
	& \le c_{\alpha,\beta,\gamma,\delta}\langle x\rangle^{\frac{\sigma k(j+1)}m}\langle\xi\rangle^{2m-(j+1)-\vert\beta\vert},
\end{align*}
since $\vert\gamma\vert\geq1$. In view of this estimate, formula \eqref{11092020E7} (notice that the first sum starts at $l=1$ anew) and \eqref{11092020E6}, the induction is now ended.
\end{proof}

By using the notations for symbol classes introduced above, Lemma \ref{11092020L1} shows that for all $1\le j\le 2m$ and $0<\varepsilon\le1$, the symbol $\sigma_{j,\varepsilon}$ belongs to the following class
\begin{equation}\label{17092020E4}
	\sigma_{j,\varepsilon}\in S(\langle x\rangle^{\frac{\sigma kj}m}\langle\xi\rangle^{2m-j} ; \langle\xi\rangle,1),
\end{equation}
with uniform estimates of the associated seminorms \eqref{16092020E1} with respect to the parameter $0<\varepsilon\le1$. This property is the key stone of the proof of the next lemma, which provides a first bound of the quantities we aim at controlling.

\begin{lem}\label{11092020L2} There exists a positive constant $c>0$ such that for all $1\le j\le 2m$, $0<\varepsilon\le 1$ and $v\in\mathscr S(\mathbb R^n)$,
$$\big\vert\big\langle\ad^j_{\phi_{\varepsilon}}(-\Delta)^mv,v\big\rangle_{L^2(\mathbb R^n)}\big\vert\le c\big\Vert\langle x\rangle^{\frac{\sigma kj}{2m}}\langle D_x\rangle^{m-\frac j2}v\big\Vert^2_{L^2(\mathbb R^n)}.$$
\end{lem}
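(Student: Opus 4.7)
The plan is to exploit the uniform symbol class membership $\sigma_{j,\varepsilon}\in S(\langle x\rangle^{\sigma kj/m}\langle\xi\rangle^{2m-j};\langle\xi\rangle,1)$ from \eqref{17092020E4}, together with the observation that the weight factors symmetrically as $M_j^2$, where
$$M_j(x,\xi) = \langle x\rangle^{\frac{\sigma kj}{2m}}\langle\xi\rangle^{m-\frac j2}\in S(M_j;\langle\xi\rangle,1).$$
The symbol $M_j$ is elliptic in this class, hence admits a parametrix $e_j\in S(M_j^{-1};\langle\xi\rangle,1)$, constructed by the composition formula \eqref{16112020E3} in the Weyl--H\"ormander framework (see \cite{MR2668420}), satisfying $M_j\,\sharp\,e_j = 1 + \rho_j^{(1)}$ and $e_j\,\sharp\,M_j = 1 + \rho_j^{(2)}$, with $\rho_j^{(i)}$ in a symbol class of arbitrarily low order, and with seminorms independent of $\varepsilon\in(0,1]$ since $M_j$ itself does not depend on $\varepsilon$.

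Next, I would introduce the symbol $q_{j,\varepsilon} = e_j\,\sharp\,\sigma_{j,\varepsilon}\,\sharp\,e_j$, which by the calculus of composition belongs to $S(1;\langle\xi\rangle,1)$ with seminorms uniform in $\varepsilon\in(0,1]$ thanks to Lemma \ref{11092020L1}. The Calder\'on--Vaillancourt theorem then provides a uniform $L^2$-bound $\Vert q_{j,\varepsilon}^w\Vert_{\mathcal L(L^2(\mathbb R^n))}\le C$. Inverting the parametrix relations yields a decomposition
$$\sigma_{j,\varepsilon} = M_j\,\sharp\,q_{j,\varepsilon}\,\sharp\,M_j + r_{j,\varepsilon},$$
where $r_{j,\varepsilon}$ lies in a symbol class of order strictly lower than $S(M_j^2;\langle\xi\rangle,1)$, again uniformly in $\varepsilon$.

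Pairing both sides with $v\in\mathscr S(\mathbb R^n)$ and using that $M_j$ is real so that $(M_j^w)^* = M_j^w$, one obtains
$$\big\langle\sigma_{j,\varepsilon}^wv,v\big\rangle_{L^2(\mathbb R^n)} = \big\langle q_{j,\varepsilon}^w(M_j^wv),M_j^wv\big\rangle_{L^2(\mathbb R^n)} + \big\langle r_{j,\varepsilon}^wv,v\big\rangle_{L^2(\mathbb R^n)}.$$
Cauchy--Schwarz bounds the main term by $C\Vert M_j^wv\Vert_{L^2(\mathbb R^n)}^2$. A further application of \eqref{16112020E3} shows that the difference between $M_j^w$ and the product operator $\langle x\rangle^{\sigma kj/(2m)}\langle D_x\rangle^{m-j/2}$ has a symbol of strictly lower order, so that $\Vert M_j^wv\Vert_{L^2(\mathbb R^n)}$ is controlled by the target norm up to manageable correction terms.

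The principal obstacle I foresee is the bookkeeping of the lower-order remainders: both the term $\langle r_{j,\varepsilon}^wv,v\rangle$ and the discrepancy between $M_j^w$ and $\langle x\rangle^{\sigma kj/(2m)}\langle D_x\rangle^{m-j/2}$ give rise to contributions which must be absorbed into the right-hand side $c\Vert\langle x\rangle^{\sigma kj/(2m)}\langle D_x\rangle^{m-j/2}v\Vert^2_{L^2(\mathbb R^n)}$. This requires each such remainder to belong to a class $S(M_j^{2-\delta};\langle\xi\rangle,1)$ for some $\delta>0$ with uniform seminorms in $\varepsilon\in(0,1]$, so that a final renormalisation by $M_j^w$ and an application of Calder\'on--Vaillancourt close the estimate with a constant $c>0$ independent of $j\in\{1,\dots,2m\}$ and $\varepsilon\in(0,1]$.
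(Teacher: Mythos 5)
Your strategy is the same in spirit as the paper's---conjugate $\sigma_{j,\varepsilon}^w$ into a bounded operator on $L^2(\mathbb R^n)$ via the class membership \eqref{17092020E4} and Calder\'on--Vaillancourt---but your execution introduces technical hurdles that the paper entirely sidesteps. You quantize the combined weight $M_j(x,\xi)=\langle x\rangle^{\sigma kj/(2m)}\langle\xi\rangle^{m-j/2}$ by the Weyl rule and then build a parametrix $e_j$ for $M_j^w$, which forces you to track remainders from the parametrix relations and, separately, to compare $M_j^w$ with the operator product $\langle x\rangle^{\sigma kj/(2m)}\langle D_x\rangle^{m-j/2}$ that appears in the target norm. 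The paper instead keeps the two weight factors separate and sandwiches on the symbol side: it shows that
$$\langle x\rangle^{-\frac{\sigma kj}{2m}}\ \sharp\ \langle\xi\rangle^{-m+\frac j2}\ \sharp\ \sigma_{j,\varepsilon}\ \sharp\ \langle\xi\rangle^{-m+\frac j2}\ \sharp\ \langle x\rangle^{-\frac{\sigma kj}{2m}}\in S(1;\langle\xi\rangle,1),$$
uniformly in $\varepsilon$, so that the \emph{operator}
$\langle x\rangle^{-\sigma kj/(2m)}\langle D_x\rangle^{-m+j/2}\sigma_{j,\varepsilon}^w\langle D_x\rangle^{-m+j/2}\langle x\rangle^{-\sigma kj/(2m)}$
is bounded on $L^2$ by Calder\'on--Vaillancourt, with a uniform bound. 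Since $\langle x\rangle^{\pm\sigma kj/(2m)}$ and $\langle D_x\rangle^{\pm(m-j/2)}$ are \emph{exact} inverses of one another as multiplication operators and Fourier multipliers (and both preserve $\mathscr S(\mathbb R^n)$), replacing $v$ by $\langle x\rangle^{\sigma kj/(2m)}\langle D_x\rangle^{m-j/2}v$ turns the resulting inequality directly into the claimed estimate --- no parametrix, no remainder, no absorption argument. Your version would work if the bookkeeping you flag is carried through (the lower-order remainders can indeed be absorbed, since $\Vert v\Vert_{L^2}\le\Vert\langle x\rangle^{\sigma kj/(2m)}\langle D_x\rangle^{m-j/2}v\Vert_{L^2}$ for $1\le j\le 2m$), but the paper's trick of using exact operator-level inverses makes those concerns vanish.
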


\begin{proof} Let $1\le j\le 2m$ fixed all along the proof. We deduce from the property \eqref{17092020E4} and the composition formula \eqref{16112020E3} that
$$\langle x\rangle^{-\frac{\sigma kj}{2m}}\ \sharp\ \langle\xi\rangle^{-m+\frac j2}\ \sharp\ \sigma_{j,\varepsilon}\ \sharp\ \langle\xi\rangle^{-m+\frac j2}\ \sharp\ \langle x\rangle^{-\frac{\sigma kj}{2m}}\in S(1 ; \langle\xi\rangle, 1)\subset C^{\infty}_b(\mathbb R^{2n}),$$
with uniform estimates of the associated seminorms \eqref{16092020E1} with respect to the parameter $0<\varepsilon\le1$. It therefore follows from a quantitative version of the Calder\'on-Vaillancourt's theorem, see e.g. \cite{MR1696697} (Theorem 1.2), that the following operator is bounded on $L^2(\mathbb R^n)$,
$$\langle x\rangle^{-\frac{\sigma kj}{2m}}\langle D_x\rangle^{-m+\frac j2}\sigma_{j,\varepsilon}^w\langle D_x\rangle^{-m+\frac j2}\langle x\rangle^{-\frac{\sigma kj}{2m}}: L^2(\mathbb R^n)\rightarrow L^2(\mathbb R^n),$$
and its norm operator can bounded uniformly with respect to $0<\varepsilon\le 1$. As a consequence, there exists a positive constant $c>0$ such that for all $0<\varepsilon\le1$ and $v\in\mathscr S(\mathbb R^n)$,
$$\big\vert\big\langle\sigma_{j,\varepsilon}^w\langle D_x\rangle^{-m+\frac j2}\langle x\rangle^{-\frac{\sigma kj}{2m}}v,\langle D_x\rangle^{-m+\frac j2}\langle x\rangle^{-\frac{\sigma kj}{2m}}v\big\rangle_{L^2(\mathbb R^n)}\big\vert\le c\Vert v\Vert^2_{L^2(\mathbb R^n)}.$$
A straightforward change of variable therefore ends the proof of Lemma \ref{11092020L2}, since $\sigma_{j,\varepsilon}$ is the Weyl symbol of the operator $\ad^j_{\phi_{\varepsilon}}(-\Delta)^m$ by definition.
\end{proof}

Lemma \ref{11092020L2} implies that now, we only need to prove that there exists a positive constant $c>0$ such that for all $\eta>0$ there exists another positive constant $C_{\eta}>0$ such that for all $1\le j\le 2m$ and $v\in\mathscr S(\mathbb R^n)$,
$$\big\Vert\langle x\rangle^{\frac{\sigma kj}{2m}}\langle D_x\rangle^{m-\frac j2}v\big\Vert^2_{L^2(\mathbb R^n)}\le c\bigg(\frac1{\eta}\big\Vert\langle x\rangle^{\sigma k}v\big\Vert^2_{L^2(\mathbb R^n)} + \eta\Vert v\Vert^2_{\dot H^m(\mathbb R^n)} + C_{\eta}\Vert v\Vert^2_{L^2(\mathbb R^n)}\bigg),$$
to derive the estimate \eqref{18022020E2} (we got rid of the parameter $0<\varepsilon\le1$). Notice that the composition formula \eqref{16112020E4} allows to consider the symbol 
\begin{equation}\label{17092020E3}
	a_0\in S(\langle x\rangle^{\frac{\sigma kj}m}\langle\xi\rangle^{2m-j} ; \langle\xi\rangle, \langle x\rangle),
\end{equation}
satisfying
$$a_0^w = \langle D_x\rangle^{m-\frac j2}\langle x\rangle^{\frac{\sigma kj}m}\langle D_x\rangle^{m-\frac j2}.$$
The estimate we aim at proving is therefore the following
\begin{equation}\label{17092020E1}
	\big\vert\big\langle a_0^wv,v\big\rangle_{L^2(\mathbb R^n)}\big\vert\le c\bigg(\frac1{\eta}\big\Vert\langle x\rangle^{\sigma k}v\big\Vert^2_{L^2(\mathbb R^n)} + \eta\Vert v\Vert^2_{\dot H^m(\mathbb R^n)} + C_{\eta}\Vert v\Vert^2_{L^2(\mathbb R^n)}\bigg).
\end{equation}
Moreover, the symbol $\langle x\rangle^{\frac{\sigma kj}m}\langle\xi\rangle^{2m-j}$ can be written in the following way
\begin{equation}\label{17092020E5}
	\langle x\rangle^{\frac{\sigma kj}m}\langle\xi\rangle^{2m-j} = \langle x\rangle^{\frac{2\sigma k}{p_j}}\langle\xi\rangle^{\frac{2m}{q_j}}\quad\text{with}\quad p_j = \frac{2m}j,\ q_j = \frac{2m}{2m-j}.
\end{equation}
Notice that $p_j$ and $q_j$ are H\"older conjugates, that is, $1/p_j+1/q_j = 1$. This observation motivates the introduction of the following symbol classes for all positive real numbers $g,h>0$ and $p,q\geq1$ (not necessary H\"older conjugates in general),
$$S^{g,h}_{p,q} = S(\langle x\rangle^{\frac gp}\langle\xi\rangle^{\frac hq} ; \langle\xi\rangle,\langle x\rangle).$$
Young's inequality implies that when $1/p+1/q=1$, any symbol $a\in S^{2g,2h}_{p,q}$ satisfies
$$\forall\eta>0,\forall(x,\xi)\in\mathbb R^{2n},\quad\vert a(x,\xi)\vert\lesssim\frac1{\eta}\langle x\rangle^{2g} + \eta\langle\xi\rangle^{2h},$$
the constant only depending on the real numbers $g,h,p,q$ (and not on $\eta>0$). Applying a formal G{\aa}rding type inequality, we would conjecture that an estimate of the following form could hold for all $\eta>0$ and $v\in\mathscr S(\mathbb R^n)$,
\begin{equation}\label{17092020E2}
	\big\vert\big\langle a^wv,v\big\rangle_{L^2(\mathbb R^n)}\big\vert\lesssim\frac1{\eta}\big\Vert\langle x\rangle^gv\big\Vert^2_{L^2(\mathbb R^n)} + \eta\Vert v\Vert^2_{H^h(\mathbb R^n)} + C_{\eta}\Vert v\Vert^2_{L^2(\mathbb R^n)}.
\end{equation}
This is exactly the type of estimate we aim at proving. In order to make this formal derivation rigorous, we introduce the notion of anti-Wick quantization, following \cite{MR2668420} (Section 1.7), which has the advantage to preserve positivity, in contrast to the Weyl quantization. Given a tempered symbol $a\in\mathscr S'(\mathbb R^n)$, we define the anti-Wick operator $A_a$ with symbol $a$ as the map $\mathscr S(\mathbb R^n)\rightarrow\mathscr S'(\mathbb R^n)$ given by 
$$A_au = (2\pi)^{-n}V^*(aVu),\quad u\in\mathscr S(\mathbb R^n),$$
where $V$ denotes the short-time Fourier transform, see e.g. \cite{MR2668420} (Definition 1.7.1). The idea to use anti-Wick operators in this context was suggested to the author by J. Bernier who (with co-authors) used this notion in the note \cite{BCC} to obtain microlocal estimates. In the next two lemmas, we explicit the relationship that exists between the anti-Wick operators with symbols in the class $S^{g,h}_{p,q}$ and the Weyl quantization of those symbols. Their proofs are inspired by the one of \cite{BCC} (Lemma 1).

\begin{lem}\label{09092020L1} Let $g,h>0$ and $p,q\geq1$ be positive real numbers. For all symbol $a\in S^{g,h}_{p,q}$, there exists remainders $r_1\in S^{g-p,h}_{p,q}$ and $r_2\in S^{g,h-q}_{p,q}$ such that
$$A_a = a^w + r^w_1 + r^w_2,$$
where $A_a$ denotes the anti-Wick operator with symbol $a$.
\end{lem}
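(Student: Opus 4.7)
The plan is to exploit the classical identity relating the anti-Wick and Weyl quantizations of a symbol: there exists a fixed normalized Gaussian $\Phi\in\mathscr{S}(\mathbb{R}^{2n})$ such that for any reasonable symbol $a$, the anti-Wick operator coincides with the Weyl operator associated with the phase-space convolution $a\ast\Phi$, that is, $A_a=(a\ast\Phi)^w$. Since $\Phi$ is Schwartz, convolution by $\Phi$ preserves the class $S^{g,h}_{p,q}$ with uniform seminorm bounds, and the whole problem reduces to exhibiting a decomposition $a\ast\Phi-a=r_1+r_2$ with $r_1\in S^{g-p,h}_{p,q}$ and $r_2\in S^{g,h-q}_{p,q}$.

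To produce such a decomposition, I would apply Taylor's formula to $a(X-Y)$ in the variable $Y\in\mathbb{R}^{2n}$ around $Y=0$ and integrate against $\Phi(Y)\,\mathrm dY$. Writing $Y=(y,\eta)\in\mathbb{R}^n\times\mathbb{R}^n$ and using that $\Phi$ is even in each coordinate of $Y$, the first-order contribution $\int Y\,\Phi(Y)\,\mathrm dY$ vanishes and all off-diagonal second-order moments vanish as well, so that
$$(a\ast\Phi-a)(X)=\tfrac12\sum_{i=1}^{n} c_i^x(\partial_{x_i}^2 a)(X)+\tfrac12\sum_{i=1}^{n} c_i^\xi(\partial_{\xi_i}^2 a)(X)+\mathcal R(X),$$
for some positive constants $c_i^x,c_i^\xi$ and a Taylor remainder $\mathcal R$. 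The defining estimates \eqref{16092020E1} of the class $S^{g,h}_{p,q}$, with sub-linear weights $\Phi(x,\xi)=\langle\xi\rangle$ and $\Psi(x,\xi)=\langle x\rangle$, immediately give $\partial_{x_i}^2 a\in S^{g-2p,h}_{p,q}\subset S^{g-p,h}_{p,q}$ and $\partial_{\xi_i}^2 a\in S^{g,h-2q}_{p,q}\subset S^{g,h-q}_{p,q}$. I would then let $r_1$ collect all $x$-type contributions and $r_2$ all $\xi$-type ones. The Taylor remainder $\mathcal R$ is handled by pushing the expansion to a sufficiently high even order $2N$: the parity of $\Phi$ again annihilates odd orders, each pure even order of derivation lies in a class strictly smaller than the announced ones, and mixed-order terms fit into one of $S^{g-p,h}_{p,q}$ or $S^{g,h-q}_{p,q}$.

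The main technical obstacle, which is bookkeeping rather than conceptual, is to verify that $r_1$ and $r_2$ genuinely satisfy the seminorm estimates \eqref{16092020E1} for the announced classes, and not merely pointwise bounds. This requires controlling $|\partial_x^\alpha\partial_\xi^\beta a(X-tY)|$ at the intermediate points appearing in the integral remainder by the weights evaluated at $X$. This is achieved through the temperate weight inequality $\langle X-tY\rangle\lesssim\langle X\rangle\langle Y\rangle$, uniformly in $t\in[0,1]$, so that the loss is only polynomial in $Y$ and absorbed by the fast decay of $\Phi$ once integrated. Differentiating the Taylor identity in $(x,\xi)$ and running the same argument on $\partial_x^\alpha\partial_\xi^\beta a$ in place of $a$ then yields the full family of seminorm bounds, concluding the proof.
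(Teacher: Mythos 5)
Your overall strategy coincides with the paper's: both start from the formula $A_a=(a\ast\Phi)^w$ for a fixed Gaussian $\Phi(x,\xi)=\pi^{-n}e^{-\vert x\vert^2-\vert\xi\vert^2}$ (Proposition 1.7.9 in \cite{MR2668420}), Taylor-expand $a$ under the convolution integral, and control the remainders via Peetre's inequality \eqref{09092020E3} combined with the Gaussian decay. The resulting seminorm bounds are obtained exactly as you describe in your last paragraph.

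Where you diverge is in an unnecessary detour. You work with the Lagrange (polynomial) form of Taylor's formula, observe that the degree-one term $\nabla a(X)\cdot\int Y\Phi(Y)\,\mathrm dY$ and the off-diagonal second-order moments vanish by the parity of $\Phi$, and are then left with pure second-derivative terms plus a remainder $\mathcal R$ which you propose to tame by expanding to even higher order. This is more than is needed, and the phrase ``the Taylor remainder $\mathcal R$ is handled by pushing the expansion to a sufficiently high even order $2N$'' is misleading: no matter how far you expand, an integral remainder persists and must be estimated by the same Peetre argument; nothing is gained by raising $N$. The paper avoids the parity bookkeeping entirely by writing the first-order Taylor formula in \emph{integral-remainder} form, $a(\tilde X)=a(X)+\int_0^1\nabla a\bigl(X+t(\tilde X-X)\bigr)\cdot(\tilde X-X)\,\mathrm dt$, and defining $r_1$ (resp.\ $r_2$) as the contribution of $\nabla_x a$ (resp.\ $\nabla_\xi a$) to the convolution. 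Since $\nabla_x a\in S^{g-p,h}_{p,q}$ and $\nabla_\xi a\in S^{g,h-q}_{p,q}$ already by the defining estimates of the class, this lands in the announced classes in one stroke. Your approach is correct and would yield, if carried out, an even better remainder (lying in $S^{g-2p,h}_{p,q}+S^{g,h-2q}_{p,q}$), but that extra gain is not required by the lemma, and the vanishing of odd moments plays no role in the paper's cleaner one-step argument.
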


\begin{proof} By applying \cite{MR2668420} (Proposition 1.7.9), we know that the Weyl symbol of the operator $A_a$ is the symbol $b\in C^{\infty}(\mathbb R^{2n})$ given by
\begin{equation}\label{defb}
	b(x,\xi) = \frac1{\pi^n}\iint_{\mathbb R^{2n}} a(\tilde x,\tilde\xi)e^{-\vert x-\tilde x\vert^2-\vert\xi-\tilde\xi\vert^2}\ \mathrm d\tilde x\mathrm d\tilde\xi,\quad (x,y)\in\mathbb R^{2n}.
\end{equation}
Applying Taylor's formula with remainder term to the symbol $a$ in $(x,\xi)\in\mathbb R^{2n}$ leads to
\begin{multline*}
	a(\tilde x,\tilde\xi) = a(x,\xi) + \int_0^1\nabla_xa(x+t(\tilde x-x),\xi+t(\tilde\xi-\xi))\cdot(\tilde x-x)\ \mathrm dt \\
	+ \int_0^1\nabla_{\xi}a(x+t(\tilde x-x),\xi+t(\tilde\xi-\xi))\cdot(\tilde\xi-\xi)\ \mathrm dt.
\end{multline*}
Plugging this expansion in the definition \eqref{defb} of the symbol $b$ and making the change of variables $(\tilde x,\tilde\xi)\leftarrow (\tilde x-x,\tilde\xi -\xi)$ motivates to introduce the two following remainders
$$r_1(x,\xi) = \frac1{\pi^n}\iint_{\mathbb R^{2n}}\int_0^1\nabla_xa(x+t\tilde x,\xi+t\tilde\xi)\cdot\tilde x\ e^{-\vert\tilde x\vert^2-\vert\tilde\xi\vert^2}\ \mathrm dt\mathrm d\tilde x\mathrm d\tilde\xi,$$
and
$$r_2(x,\xi) = \frac1{\pi^n}\iint_{\mathbb R^{2n}}\int_0^1\nabla_{\xi}a(x+t\tilde x,\xi+t\tilde\xi)\cdot\tilde\xi\ e^{-\vert\tilde x\vert^2-\vert\tilde\xi\vert^2}\ \mathrm dt\mathrm d\tilde x\mathrm d\tilde\xi.$$
Indeed, with these definitions of $r_1$ and $r_2$, we have
$$A_a = b^w = a^w + r^w_1 + r^w_2.$$
Let us check that $r_1\in S^{g-p,h}_{p,q}$. By symmetry, we will also have that $r_2\in S^{g,h-q}_{p,q}$. We just need to prove that for all $(\alpha,\beta)\in\mathbb N^{2n}$, there exists a positive constant $c_{\alpha,\beta}>0$ such that for all $(x,\xi)\in\mathbb R^{2n}$,
\begin{equation}\label{09092020E1}
	\big\vert(\partial_x^{\alpha}\partial_\xi^{\beta}r_1)(x,\xi)\big\vert\le c_{\alpha,\beta}\langle x\rangle^{\frac{g-p}p-\vert\alpha\vert}\langle\xi\rangle^{\frac hq-\vert\beta\vert}.
\end{equation}
By definition of the symbol class $S^{g,h}_{p,q}$, we get that for all $(\delta,\gamma)\in\mathbb N^{2n}$, there exists a positive constant $c_{\delta,\gamma}>0$ such that for all $(x,\xi)\in\mathbb R^{2n}$,
$$\big\vert(\partial_x^{\delta}\partial_\xi^{\gamma}a)(x,\xi)\big\vert\le c_{\delta,\gamma}\langle x\rangle^{\frac gp-\vert\delta\vert}\langle\xi\rangle^{\frac hq-\vert\gamma\vert}.$$
We therefore deduce that for all $(\alpha,\beta)\in\mathbb N^{2n}$, $(x,\xi),(\tilde x,\tilde\xi)\in\mathbb R^{2n}$ and $0\le t\le 1$,
$$\big\vert(\partial_x^{\alpha}\partial_\xi^{\beta}\nabla_xa)(x+t\tilde x,\xi+t\tilde\xi)\big\vert\le c_{\alpha,\beta}\langle x+t\tilde x\rangle^{\frac gp-\vert\alpha\vert-1}\langle\xi+t\tilde\xi\rangle^{\frac hq-\vert\beta\vert}.$$
Recalling Peetre's inequality, see e.g. \cite{MR2668420} (formula (0.1.2)),
\begin{equation}\label{09092020E3}
	\forall s\in\mathbb R,\exists c_s>0,\forall x,y\in\mathbb R^n,\quad \langle x+y\rangle^s\le c_s\langle x\rangle^s\langle y\rangle^{\vert s\vert},
\end{equation}
we get that for all $(\alpha,\beta)\in\mathbb N^{2n}$, $(x,\xi),(\tilde x,\tilde\xi)\in\mathbb R^{2n}$ and $0\le t\le 1$,
$$\big\vert(\partial_x^{\alpha}\partial_\xi^{\beta}\nabla_xa)(x+t\tilde x,\xi+t\tilde\xi)\big\vert\le c_{\alpha,\beta}c_{\vert\alpha\vert,\vert\beta\vert}\langle x\rangle^{\frac{g-p}p-\vert\alpha\vert}\langle\xi\rangle^{\frac hq-\vert\beta\vert}\langle\tilde x\rangle^{\frac gp+\vert\alpha\vert+1}\langle\tilde\xi\rangle^{\frac hq+\vert\beta\vert}.$$
Plugging this estimate in the definition of $r_1$ yields to \eqref{09092020E1}. This ends the proof of Lemma \ref{09092020L1}.
\end{proof}

In the following lemma, we perform the very same study for the symbol $\frac1{\eta}\langle x\rangle^g + \eta\langle\xi\rangle^h$.

\begin{lem}\label{04092020L1} Let $g,h>0$ be positive real numbers. We consider the symbols
$$H_{\eta}(x,\xi) = \frac1{\eta}\langle x\rangle^g + \eta\langle\xi\rangle^h,\quad\eta>0,\ (x,\xi)\in\mathbb R^{2n}.$$
For all $\eta>0$, there exists some remainders $r_{1,\eta},r_{2,\eta}\in C^{\infty}(\mathbb R^n)$ satisfying
\begin{align}
	& \forall\alpha\in\mathbb N^n,\exists c_{\alpha}>0,\forall\eta>0, \forall x\in\mathbb R^n,\quad \big\vert(\partial_x^{\alpha}r_{1,\eta})(x)\big\vert\le\frac{c_{\alpha}}{\eta}\langle x\rangle^{g-1-\vert\alpha\vert}, \label{11092020E8} \\[5pt]
	& \forall\alpha\in\mathbb N^n,\exists c_{\alpha}>0,\forall\eta>0, \forall\xi\in\mathbb R^n,\quad \big\vert(\partial_{\xi}^{\alpha}r_{2,\eta})(\xi)\big\vert\le c_{\alpha}\eta\langle\xi\rangle^{h-1-\vert\alpha\vert}, \label{03122020E2}
\end{align}
and such that
$$A_{\eta} = H_{\eta}^w + r_{1,\eta}^w + r_{2,\eta}^w,$$
with $A_{\eta}$ the anti-Wick operator with symbol $H_{\eta}$.
\end{lem}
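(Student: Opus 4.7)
The plan is to exploit the linearity of the anti-Wick quantization in its symbol, together with the fact that $H_{\eta}$ splits naturally into a pure space symbol and a pure frequency symbol. Writing $H_{\eta}=h_{1,\eta}+h_{2,\eta}$ with $h_{1,\eta}(x)=\eta^{-1}\langle x\rangle^{g}$ and $h_{2,\eta}(\xi)=\eta\langle\xi\rangle^{h}$, we have $A_{\eta}=A_{h_{1,\eta}}+A_{h_{2,\eta}}$, and the Gaussian kernel appearing in the formula \eqref{defb} factorizes as $e^{-|x-\tilde x|^{2}}e^{-|\xi-\tilde\xi|^{2}}$. Convolving $h_{1,\eta}(\tilde x)$ against the full Gaussian therefore integrates out the $\tilde\xi$-variable and produces a Weyl symbol depending only on $x$; symmetrically for $h_{2,\eta}$. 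This is already what guarantees that the remainders to be constructed depend on $x$ only, respectively on $\xi$ only, as requested by the estimates \eqref{11092020E8} and \eqref{03122020E2}.

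Next I would repeat the first-order Taylor expansion of Lemma \ref{09092020L1} on each piece separately. For the space piece, Taylor's formula with integral remainder applied at $x$ gives
$$h_{1,\eta}(\tilde x)=h_{1,\eta}(x)+\int_{0}^{1}\nabla h_{1,\eta}(x+t(\tilde x-x))\cdot(\tilde x-x)\,\mathrm dt,$$
so that after inserting this into \eqref{defb} and performing the change of variable $\tilde x\leftarrow\tilde x-x$, the constant term reproduces $h_{1,\eta}^{w}$ and the correction defines $r_{1,\eta}^{w}$ with
$$r_{1,\eta}(x)=\frac{1}{\pi^{n/2}}\iint_{\mathbb R^{n}\times[0,1]}\nabla h_{1,\eta}(x+t\tilde x)\cdot\tilde x\,e^{-|\tilde x|^{2}}\,\mathrm d\tilde x\,\mathrm dt.$$
A symmetric argument in the frequency variable produces $r_{2,\eta}$, yielding the announced decomposition $A_{\eta}=H_{\eta}^{w}+r_{1,\eta}^{w}+r_{2,\eta}^{w}$.

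The pointwise bounds \eqref{11092020E8} and \eqref{03122020E2} then follow by differentiating under the integral sign. Using that $|\partial_{x}^{\alpha}\nabla h_{1,\eta}(y)|\le C_{\alpha}\eta^{-1}\langle y\rangle^{g-1-|\alpha|}$, and its $\xi$-analogue for $h_{2,\eta}$, combined with Peetre's inequality \eqref{09092020E3} to replace $\langle x+t\tilde x\rangle$ by $\langle x\rangle$ at the cost of a polynomial factor in $\tilde x$ absorbed by the Gaussian, one pulls out exactly the prescribed $\eta^{-1}$ (resp. $\eta$) as a multiplicative constant and obtains the claimed polynomial decay. I do not foresee a serious obstacle: the result is essentially a specialization of Lemma \ref{09092020L1} to symbols with separated variables, the only additional bookkeeping being to propagate the $\eta$-prefactors, which is immediate since they never interact with the Taylor remainder structure.
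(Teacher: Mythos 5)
Your proposal is correct and follows essentially the same route as the paper: both compute the Weyl symbol of the anti-Wick operator via the Gaussian convolution formula, apply a first-order Taylor expansion with integral remainder, and estimate the resulting remainders using Peetre's inequality. The only cosmetic difference is that you split $H_{\eta}$ into its $x$-part and $\xi$-part before Taylor-expanding, while the paper Taylor-expands the full $H_{\eta}$ in $(x,\xi)$ and then observes that $\nabla_x H_{\eta}$ (resp. $\nabla_\xi H_{\eta}$) is independent of $\xi$ (resp. $x$); the resulting remainders and bounds are identical.
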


\begin{proof} Mimicking exactly the proof of Lemma \ref{09092020L1}, the two remainder we need to consider are the following ones
\begin{equation}\label{11092020E11}
	r_{1,\eta}(x,\xi) = \frac1{\pi^n}\iint_{\mathbb R^{2n}}\int_0^1\nabla_xH_{\eta}(x+t\tilde x,\xi+t\tilde\xi)\cdot\tilde x\ e^{-\vert\tilde x\vert^2-\vert\tilde\xi\vert^2}\,\mathrm dt\mathrm d\tilde x\mathrm d\tilde\xi,
\end{equation}
and
$$r_{2,\eta}(x,\xi) = \frac1{\pi^n}\iint_{\mathbb R^{2n}}\int_0^1\nabla_{\xi}H_{\eta}(x+t\tilde x,\xi+t\tilde\xi)\cdot\tilde{\xi}\ e^{-\vert\tilde x\vert^2-\vert\tilde\xi\vert^2}\,\mathrm dt\mathrm d\tilde x\mathrm d\tilde\xi.$$
Let us prove that the estimate \eqref{11092020E8} holds. The inequality \eqref{03122020E2} is then obtained by symmetry. First, notice that since the symbol $\nabla_xH_{\eta}$ does not depend on the variable $\xi\in\mathbb R^n$, so does the remainder $r_{1,\eta}$ and we omit this variable in the following. By definition of the symbol $H_{\eta}$ as the sum of Japanese brackets, we know that for all $\alpha\in\mathbb N^n$, there exists a positive constant $c_{\alpha}>0$ such that for all $\eta>0$, $x,\tilde x\in\mathbb R^n$ and $0\le t\le1$,
$$\big\vert(\partial_x^{\alpha}\nabla_xH_{\eta})(x+t\tilde x)\big\vert\le\frac{c_{\alpha}}{\eta}\langle x+t\tilde x\rangle^{g-1-\vert\alpha\vert}.$$
We then deduce from Peetre's inequality \eqref{09092020E3} that for all $\alpha\in\mathbb N^n$, $\eta>0$, $x,\tilde x\in\mathbb R^n$ and $0\le t\le1$,
$$\big\vert(\partial_x^{\alpha}\nabla_xH_{\lambda})(x+t\tilde x)\big\vert\le\frac{c_{\alpha}c_{g,\vert\alpha\vert}}{\eta}\langle x\rangle^{g-1-\vert\alpha\vert}\langle\tilde x\rangle^{g+1+\vert\alpha\vert}.$$
Plugging this estimate in the definition \eqref{11092020E11} of the remainder $r_{1,\eta}$ leads to the estimate \eqref{11092020E8}. This ends the proof of Lemma \ref{04092020L1}.
\end{proof}

We now have all the ingredients required to tackle the proof of the estimate \eqref{17092020E2}.

\begin{prop}\label{17092020P1} Let $g,h>0$ and $p,q\geq1$ be positive real numbers with $1/p+1/q = 1$. For all symbol $a\in S^{2g,2h}_{p,q}$, there exists a positive constant $c>0$ such that for all $\eta>0$ and $v\in\mathscr S(\mathbb R^n)$,
\begin{equation}\label{07092020E1}
	\big\vert\big\langle a^wv,v\big\rangle_{L^2(\mathbb R^n)}\big\vert\le c\bigg(\frac1{\eta}\big\Vert\langle x\rangle^gv\big\Vert^2_{L^2(\mathbb R^n)} + \eta\Vert v\Vert^2_{H^h(\mathbb R^n)}\bigg).
\end{equation}
\end{prop}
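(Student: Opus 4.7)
The strategy is to exploit the positivity of the anti-Wick quantization following the lines sketched by the author, treating $H_\eta(x,\xi) = \tfrac{1}{\eta}\langle x\rangle^{2g} + \eta\langle\xi\rangle^{2h}$ as the target comparison symbol. The plan is first to apply Lemma \ref{09092020L1} to write $A_a = a^w + r_1^w + r_2^w$ with $r_1\in S^{2g-p,2h}_{p,q}$ and $r_2\in S^{2g,2h-q}_{p,q}$, so that
\[
|\langle a^w v,v\rangle_{L^2}| \le |\langle A_a v,v\rangle_{L^2}| + |\langle r_1^w v,v\rangle_{L^2}| + |\langle r_2^w v,v\rangle_{L^2}|,
\]
reducing the problem to bounding each piece on the right-hand side.

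For the anti-Wick piece, I would use that $A_a$ is defined via an integral against the nonnegative measure $|Vv|^2\,dx\,d\xi$, so that $|\langle A_a v,v\rangle_{L^2}| \le \langle A_{|a|}v,v\rangle_{L^2}$. Applying Young's inequality to the pointwise estimate $|a(x,\xi)|\lesssim\langle x\rangle^{2g/p}\langle\xi\rangle^{2h/q}$ together with the identity $\tfrac{1}{p}+\tfrac{1}{q}=1$ yields the comparison $|a|\lesssim H_\eta$, from which the positivity/monotonicity of anti-Wick gives $\langle A_{|a|}v,v\rangle_{L^2} \lesssim \langle A_{H_\eta}v,v\rangle_{L^2}$. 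Then Lemma \ref{04092020L1} decomposes $A_{H_\eta} = H_\eta^w + r_{1,\eta}^w + r_{2,\eta}^w$, and the leading Weyl piece gives exactly the desired principal term
\[
\langle H_\eta^w v,v\rangle_{L^2} = \tfrac{1}{\eta}\|\langle x\rangle^g v\|_{L^2}^2 + \eta\|v\|_{H^h}^2,
\]
since $\langle x\rangle^{2g}$ quantizes to multiplication by $\langle x\rangle^{2g}$ and $\langle\xi\rangle^{2h}$ to $\langle D_x\rangle^{2h}$.

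What remains is to absorb the four remainder terms $r_1^w, r_2^w, r_{1,\eta}^w, r_{2,\eta}^w$. For the $\eta$-dependent remainders of Lemma \ref{04092020L1}, the uniform derivative estimates with the explicit powers $\eta^{-1}$ and $\eta$ permit (after composition with appropriate weights $\langle x\rangle^{-g}$, $\langle D_x\rangle^{-h}$) a symbol in $S(1;\langle\xi\rangle,\langle x\rangle)$, yielding via the quantitative Calder\'on-Vaillancourt theorem (as used in Lemma \ref{11092020L2}) a bound by $\tfrac{1}{\eta}\|\langle x\rangle^g v\|_{L^2}^2 + \eta\|v\|_{H^h}^2$ with constant independent of $\eta$. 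For the $\eta$-independent remainders of Lemma \ref{09092020L1}, which live in the smaller symbol classes $S^{2g-p,2h}_{p,q}$ and $S^{2g,2h-q}_{p,q}$, I would iterate: apply the same decomposition again to each $r_j^w$, obtaining remainders of strictly lower order in $x$ or $\xi$. After finitely many iterations the symbols land in $S(1;\langle\xi\rangle,\langle x\rangle)$, where Calder\'on-Vaillancourt provides an $L^2$-bound, contributing to the $\|v\|^2_{L^2}$-part already absorbed into $\eta\|v\|^2_{H^h}$.

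The principal obstacle is to track the uniformity of constants in $\eta$ through the interlocking decompositions: each pass through Lemma \ref{09092020L1} or \ref{04092020L1} produces remainders whose symbol seminorms may involve powers of $\eta$, and one must verify these are compatible with the target estimate rather than deteriorating it. The Young step comparing $|a|$ to $H_\eta$ also requires attention because the natural exponents produced by Young's inequality are $\eta^{-2/q}$ and $\eta^{2/p}$; these need to be reconciled with $\eta^{-1}$ and $\eta$, which is the delicate computational point underlying the whole argument.
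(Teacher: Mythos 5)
Your approach is substantively the same as the paper's: decompose $A_a$ via Lemma \ref{09092020L1}, compare $a$ pointwise to $H_\eta = \tfrac{1}{\eta}\langle x\rangle^{2g} + \eta\langle\xi\rangle^{2h}$, invoke the positivity of the anti-Wick quantization, decompose $A_{H_\eta}$ via Lemma \ref{04092020L1}, and absorb the four remainders by Calder\'on-Vaillancourt together with an induction on the symbol classes. Your monotonicity step $|\langle A_a v, v\rangle| \le \langle A_{|a|}v,v\rangle \le c_0\langle A_{H_\eta}v,v\rangle$ is an equivalent repackaging of the paper's argument, which introduces the two nonnegative symbols $a^\pm_\eta = c_0 H_\eta \pm a$ and applies anti-Wick positivity to each of them.

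Your instinct to flag the Young step is not a mere bookkeeping worry: it identifies a genuine gap, present in the paper's proof as well. The pointwise comparison $|a(x,\xi)| \le c_0\big(\tfrac{1}{\eta}\langle x\rangle^{2g} + \eta\langle\xi\rangle^{2h}\big)$ with a constant $c_0$ \emph{independent of} $\eta>0$ is false whenever $p\neq 2$. Indeed, set $x=0$ and minimize the right-hand side over $\eta>0$: its envelope is $2c_0\langle\xi\rangle^{h}$, while the left-hand side is $\langle\xi\rangle^{2h/q}$, and $2h/q>h$ precisely when $q<2$, i.e. $p>2$; letting $|\xi|\to\infty$ gives a contradiction. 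What the parametrized Young inequality actually yields, writing $ab\le \tfrac{\epsilon^p}{p}a^p+\tfrac{\epsilon^{-q}}{q}b^q$ with $\epsilon^p=1/\eta$, is $\langle x\rangle^{2g/p}\langle\xi\rangle^{2h/q}\lesssim \tfrac{1}{\eta}\langle x\rangle^{2g}+\eta^{q/p}\langle\xi\rangle^{2h}$, with exponent $q/p$ rather than $1$ on the second occurrence of $\eta$. Consequently, the conclusion \eqref{07092020E1} is false as stated when $p>2$ (one can check via Gaussian test functions $v_\lambda(x)=e^{-\lambda |x|^2/2}$ with $\lambda\to\infty$ that the two sides scale differently), and this regime does occur in the downstream application, where $p_j=2m/j>2$ for $1\le j<m$. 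The statement should therefore carry $\eta^{q/p}$ in the $H^h$-term (equivalently, $\eta^{-p/q}$ in the $\langle x\rangle$-term and $\eta$ in the $H^h$-term), and the remainder estimates coming from Lemmas \ref{09092020L1} and \ref{04092020L1} and the Calder\'on-Vaillancourt step must be propagated with the corrected exponent rather than left as a point to ``reconcile''. Once this is done, the application to the G{\aa}rding inequality \eqref{06022020E1} is unaffected, since there one ultimately fixes a single small $\eta$; but neither your argument nor the paper's is complete as written.
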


\begin{proof} The strategy adopted here follows the one adopted in the proof of \cite{BCC} (Proposition 1). Let $a\in S^{2g,2h}_{p,q}$. Since $1/p+1/q = 1$, we deduce from Young's inequality that there exists a positive constant $c_0>0$ such that for all $\eta>0$ and $(x,\xi)\in\mathbb R^{2n}$,
$$\big\vert a(x,\xi)\big\vert\le c_0\bigg(\frac1{\eta}\langle x\rangle^{2g} + \eta\langle\xi\rangle^{2h}\bigg).$$
For all $\eta>0$, we consider the symbol
$$a^{\pm}_{\eta}(x,\xi) = c_0\bigg(\frac1{\eta}\langle x\rangle^{2g} + \eta\langle\xi\rangle^{2h}\bigg)\pm a(x,\xi),\quad(x,\xi)\in\mathbb R^{2n}.$$
By applying Lemma \ref{09092020L1} and Lemma \ref{04092020L1}, we get the existence of remainders $r_{1,\eta},r_{2,\eta}\in C^{\infty}(\mathbb R^n)$ satisfying that for all $\alpha\in\mathbb N^n$, there exists a positive constant $c_{\alpha}>0$ such that for all $x,\xi\in\mathbb R^n$,
\begin{equation}\label{11092020E9}
	\big\vert(\partial_x^{\alpha}r_{1,\eta})(x)\big\vert\le\frac{c_{\alpha}}{\eta}\langle x\rangle^{2g-1-\vert\alpha\vert},\quad\big\vert(\partial_{\xi}^{\alpha}r_{2,\eta})(\xi)\big\vert\le c_{\alpha}\eta\langle\xi\rangle^{2h-1-\vert\alpha\vert},
\end{equation} 
and also of two more remainders $r_1\in S^{2g-p,2h}_{p,q}$ and $r_2\in S^{2g,2h-p}_{p,q}$, such that
$$A_{a^{\pm}_{\eta}} = (a^{\pm}_\eta)^w + r_{1,\eta}^w + r_{2,\eta}^w\pm r_1^w\pm r_2^w.$$
Since the symbol $a^{\pm}_{\eta}$ is nonnegative and has a polynomial growth, by applying \cite{MR2668420} (Proposition 1.7.6), we obtain that the operator $A_{a^{\pm}_{\eta}}$ is nonnegative, which implies that for all $v\in\mathscr S(\mathbb R^n)$,
\begin{multline*}
 	\langle A_{a^{\pm}}v,v\rangle_{L^2(\mathbb R^n)}  = c_0\bigg\langle\bigg(\frac1{\eta}\langle x\rangle^{2g}+\eta\langle\xi\rangle^{2h}\bigg)^wv,v\bigg\rangle_{L^2(\mathbb R^n)} + \big\langle r_{1,\eta}^wv,v\big\rangle_{L^2(\mathbb R^n)} \\[5pt]
	+ \big\langle r_{2,\eta}^wv,v\big\rangle_{L^2(\mathbb R^n)}\pm\langle a^wv,v\rangle_{L^2(\mathbb R^n)} \pm \big\langle r_1^wv,v\big\rangle_{L^2(\mathbb R^n)}\pm \big\langle r_2^wv,v\big\rangle_{L^2(\mathbb R^n)}\geq0,
\end{multline*}
that is,
\begin{multline}\label{09092020E2}
	\big\vert\big\langle a^wv,v\big\rangle_{L^2(\mathbb R^n)}\big\vert\le c_0\bigg(\frac1{\eta}\big\Vert\langle x\rangle^gv\big\Vert^2_{L^2(\mathbb R^n)}+\eta\Vert v\Vert^2_{H^h(\mathbb R^n)}\bigg)
	+ \big\langle r_{1,\eta}^wv,v\big\rangle_{L^2(\mathbb R^n)} \\[5pt]
	+ \big\langle r_{2,\eta}^wv,v\big\rangle_{L^2(\mathbb R^n)} + \big\vert\big\langle r_1^wv,v\big\rangle_{L^2(\mathbb R^n)}\big\vert + \big\vert\big\langle r_2^wv,v\big\rangle_{L^2(\mathbb R^n)}\big\vert.
\end{multline}
We now have to control all the remainder terms. First, notice that the symbol $\eta r_{1,\eta}$ does not depend on the parameter $\eta>0$ by definition \eqref{11092020E11}. Moreover, we deduce from the estimate \eqref{11092020E9} that 
$$\eta r_{1,\eta}\in S(\langle x\rangle^{2g-1} ; 1,\langle x\rangle),$$
and the associated seminorms are of course independent of the parameter $\eta>0$. By applying the composition formula \eqref{16112020E4}, we get that
$$\langle x\rangle^{-g+\frac12}\ \sharp\ \eta r_{1,\eta}\ \sharp\ \langle x\rangle^{-g+\frac12}\in S(1 ; 1,\langle x\rangle)\subset C^{\infty}_b(\mathbb R^{2n}).$$
Consequently, by applying a quantitative version of the Calder\'on-Vaillancourt's theorem, for which we refer to \cite{MR1696697} (Theorem 1.2), there exists a positive constant $c_{1,1}>0$ such that for all $\eta>0$ and $v\in\mathscr S(\mathbb R^n)$,
$$\big\vert\big\langle\langle x\rangle^{-g+\frac12}\eta r_{1,\eta}^w\langle x\rangle^{-g+\frac12}v,v\big\rangle_{L^2(\mathbb R^n)}\big\vert\le c_{1,1}\Vert v\Vert^2_{L^2(\mathbb R^n)}.$$
Applying this estimate to the function $\langle x\rangle^{g-\frac12}v$, we obtain that for all $\eta>0$ and $v\in\mathscr S(\mathbb R^n)$,
\begin{equation}\label{11092020E10}
	\big\vert\big\langle r_{1,\eta}^wv,v\big\rangle_{L^2(\mathbb R^n)}\big\vert\le\frac{c_{1,1}}{\eta}\big\Vert\langle x\rangle^{g-\frac12}v\big\Vert^2_{L^2(\mathbb R^n)}\le\frac{c_{1,1}}{\eta}\Vert\langle x\rangle^g v\Vert^2_{L^2(\mathbb R^n)}.
\end{equation}
Proceeding similarly, we get that there exists another positive constant $c_{2,1}>0$ such that for all $\eta>0$ and $v\in\mathscr S(\mathbb R^n)$,
\begin{equation}\label{03122020E3}
	\big\vert\big\langle(r_{2,\eta})^wv,v\big\rangle_{L^2(\mathbb R^n)}\big\vert\le c_{2,1}\eta\Vert v\Vert^2_{H^h(\mathbb R^n)}.
\end{equation}
Finally, we have to control the two terms $\langle r_1^w v,v\rangle_{L^2(\mathbb R^n)}$ and $\langle r_2^wv,v\rangle_{L^2(\mathbb R^n)}$. By symmetry, we only focus on $\langle r_1^w v,v\rangle_{L^2(\mathbb R^n)}$. To that end, we proceed by induction. When $2g>p$, we know by the induction assumption, since $r_1\in S^{2g-p,h}_{p,q}$, that there exists a positive constant $c_1>0$ such that for all $\eta>0$ and $v\in\mathscr S(\mathbb R^n)$,
\begin{align*}
	\big\vert\big\langle r_1^wv,v\big\rangle_{L^2(\mathbb R^n)}\big\vert & \le c_1\bigg(\frac1{\eta}\big\Vert\langle x\rangle^{g-\frac p2}v\big\Vert^2_{L^2(\mathbb R^n)} + \eta\Vert v\Vert^2_{H^h(\mathbb R^n)}\bigg) \\[5pt]
	& \le c_1\bigg(\frac1{\eta}\big\Vert\langle x\rangle^gv\big\Vert^2_{L^2(\mathbb R^n)} + \eta\Vert v\Vert^2_{H^h(\mathbb R^n)}\bigg).
\end{align*}
In the other case where $2g\le p$, we have
$$r_1\in S(\langle\xi\rangle^{\frac{2h}q} ; \langle\xi\rangle, 1).$$
Proceeding in the very same way as we did to obtain the estimate \eqref{11092020E10}, we get the existence of a positive constant $c_1'>0$ such that for all $v\in\mathscr S(\mathbb R^n)$,
$$\big\vert\big\langle r_1^wv,v\big\rangle_{L^2(\mathbb R^n)}\big\vert\le c'_1\big\Vert\langle D_x\rangle^{\frac hq}v\big\Vert^2_{L^2(\mathbb R^n)}.$$
Using Plancherel's theorem and Young's inequality ($1/p+1/q = 1$ by assumption) then provides the existence of a positive constant $c_1''>0$ such that for all $\eta>0$ and $v\in\mathscr S(\mathbb R^n)$,
\begin{align}\label{11092020E12}
	\big\vert\big\langle r_1^wv,v\big\rangle_{L^2(\mathbb R^n)}\big\vert
	& \le c_1''\bigg(\frac1{\eta}\Vert v\Vert^2_{L^2(\mathbb R^n)} + \eta\Vert v\Vert^2_{H^h(\mathbb R^n)}\bigg) \\[5pt]
	& \le c_1''\bigg(\frac1{\eta}\Vert\langle x\rangle^gv\Vert^2_{L^2(\mathbb R^n)} + \eta\Vert v\Vert_{H^h(\mathbb R^n)}\bigg). \nonumber
\end{align}
Proceeding similarly, we obtain the existence of another positive constant $c_2>0$ such that for all $\eta>0$ and $v\in\mathscr S(\mathbb R^n)$,
\begin{equation}\label{03122020E4}
	\big\vert\big\langle r_2^wv,v\big\rangle_{L^2(\mathbb R^n)}\big\vert
	\le c_2\bigg(\frac1{\eta}\Vert\langle x\rangle^gv\Vert^2_{L^2(\mathbb R^n)} + \eta\Vert v\Vert_{H^h(\mathbb R^n)}\bigg).
\end{equation}
Plugging the estimates \eqref{11092020E10}, \eqref{03122020E3}, \eqref{11092020E12} and \eqref{03122020E4} in \eqref{09092020E2} provides the estimate \eqref{07092020E1} we aimed at proving.
\end{proof}

To end this section, let us propely derive the estimate \eqref{17092020E1}. Let $a_0\in C^{\infty}(\mathbb R^{2n})$ be the symbol defined in \eqref{17092020E3}. According to \eqref{17092020E3}, \eqref{17092020E5} and Proposition \ref{17092020P1}, there exists a positive constant $c>0$ such that for all $\eta>0$ and $v\in\mathscr S(\mathbb R^n)$,
$$\big\vert\big\langle a_0^wv,v\big\rangle_{L^2(\mathbb R^n)}\big\vert\le c\bigg(\frac1{\eta}\big\Vert\langle x\rangle^{\sigma k}v\big\Vert^2_{L^2(\mathbb R^n)} + \eta\Vert v\Vert^2_{H^m(\mathbb R^n)}\bigg).$$
This proves that the estimate \eqref{17092020E1} actually holds, since
$$\Vert v\Vert^2_{H^m(\mathbb R^n)}\lesssim\Vert v\Vert^2_{\dot H^m(\mathbb R^n)} + \Vert v\Vert^2_{L^2(\mathbb R^n)}.$$
It also ends the proof of the G{\aa}rding type inequality \eqref{06022020E1}.

\section{Appendix}\label{appendix}

\subsection{Gelfand-Shilov spaces}\label{GS} To begin this appendix, let us define and recall basics about Gelfand-Shilov regularity. Given $\mu,\nu>0$ some positive real numbers such that $\mu+\nu\geq1$, we define the Gelfand-Shilov space $S^{\mu}_{\nu}(\mathbb R^n)$, following \cite{MR2668420} (Definition 6.1.1), as the space of Schwartz functions $g\in\mathscr S(\mathbb R^n)$ satisfying that there exist some positive constants $\varepsilon>0$ and $C>0$ such that
$$\begin{array}{ll}
	\forall x\in\mathbb R^n,\quad & \vert g(x)\vert\le Ce^{-\varepsilon\vert x\vert^{\frac1{\nu}}}, \\[7pt]
	\forall\xi\in\mathbb R^n,\quad & \vert \widehat g(\xi)\vert\le Ce^{-\varepsilon\vert\xi\vert^{\frac1{\mu}}},
\end{array}$$
where $\widehat g\in\mathscr S(\mathbb R^n)$ denotes the Fourier transform of the function $g\in\mathscr S(\mathbb R^n)$. We recall from \cite{MR2668420} (Theorem 6.1.6) the basic different equivalent characterizations of the Gelfand-Shilov spaces: 
\begin{enumerate}[label=$(\roman*)$,leftmargin=23pt ,parsep=2pt,itemsep=0pt,topsep=2pt]
\item $g\in S^{\mu}_{\nu}(\mathbb R^n)$.
\item There exists a positive constant $C>1$ such that 
$$\begin{array}{ll}
	\forall x\in\mathbb R^n,\forall\alpha\in\mathbb N^n,\quad & \big\Vert x^{\alpha}g(x)\big\Vert_{L^{\infty}(\mathbb R^n)}\le C^{1+\vert\alpha\vert}\ (\alpha!)^{\nu}, \\[7pt]
	\forall \xi\in\mathbb R^n,\forall\beta\in\mathbb N^n,\quad & \big\Vert\xi^{\beta}\widehat g(\xi)\big\Vert_{L^{\infty}(\mathbb R^n)}\le C^{1+\vert\beta\vert}\ (\beta!)^{\mu}.
\end{array}$$
\item There exists a positive constant $C>1$ such that 
$$\begin{array}{ll}
	\forall x\in\mathbb R^n,\forall\alpha\in\mathbb N^n,\quad & \big\Vert x^{\alpha}g(x)\big\Vert_{L^2(\mathbb R^n)}\le C^{1+\vert\alpha\vert}\ (\alpha!)^{\nu}, \\[7pt]
	\forall x\in\mathbb R^n,\forall\beta\in\mathbb N^n,\quad & \big\Vert\partial^{\beta}_xg(x)\big\Vert_{L^2(\mathbb R^n)}\le C^{1+\vert\beta\vert}\ (\beta!)^{\mu}.
\end{array}$$
\item There exists a positive constant $C>1$ such that
$$\forall(\alpha,\beta)\in\mathbb N^{2n},\quad \big\Vert x^{\alpha}\partial^{\beta}_xg(x)\big\Vert_{L^2(\mathbb R^n)}\le C^{1+\vert\alpha\vert+\vert\beta\vert}\ (\alpha!)^{\nu}\ (\beta!)^{\mu}.$$
\item There exists a positive constant $C>1$ such that
$$\forall(\alpha,\beta)\in\mathbb N^{2n},\quad \big\Vert x^{\alpha}\partial^{\beta}_xg(x)\big\Vert_{L^{\infty}(\mathbb R^n)}\le C^{1+\vert\alpha\vert+\vert\beta\vert}\ (\alpha!)^{\nu}\ (\beta!)^{\mu}.$$
\end{enumerate}
The assumption $\mu+\nu\geq1$ is justified by the following result, coming from the book \cite{MR2668420} anew, which can be read as a version of the Heisenberg's uncertainty principle. It shows that the Gelfand-Shilov class $S^{\mu}_{\nu}(\mathbb R^n)$ as defined above is trivial when $\mu+\nu<1$.

\begin{thm}[Theorem 6.1.10 in \cite{MR2668420}]\label{20112020T1} Any Schwartz function $g\in\mathscr S(\mathbb R^n)$ satisfying that there exist some positive constants $\mu,\nu>0$ with $\mu+\nu<1$, $\varepsilon>0$ and $C>0$ such that
\begin{equation}\tag{$i$}
\begin{array}{ll}
	\forall x\in\mathbb R^n,\quad & \vert g(x)\vert\le Ce^{-\varepsilon\vert x\vert^{\frac1{\nu}}}, \\[7pt]
	\forall\xi\in\mathbb R^n,\quad & \vert \widehat g(\xi)\vert\le Ce^{-\varepsilon\vert\xi\vert^{\frac1{\mu}}},
\end{array}
\end{equation}
is identically equal to zero. Moreover, the same holds when the assumption $(i)$ is replaced by any of the above conditions $(ii)$, $(iii)$, $(iv)$ or $(v)$.
\end{thm}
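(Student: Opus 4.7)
The plan is to establish the statement under condition $(i)$; the other four cases then follow because the equivalences $(i)\Leftrightarrow(ii)\Leftrightarrow(iii)\Leftrightarrow(iv)\Leftrightarrow(v)$ underlying the characterization of $S^\mu_\nu(\mathbb R^n)$ are purely formal Stirling-type manipulations that do not use the assumption $\mu+\nu\ge 1$. Thus each of $(ii)$--$(v)$ implies $(i)$ up to adjusting the constants $\varepsilon$ and $C$, and a single argument treats all five cases.

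I would then reduce to $n=1$. Using the norm equivalence $|\xi|^{1/\mu}\gtrsim\sum_{i=1}^n|\xi_i|^{1/\mu}$ on $\mathbb R^n$, the hypothesis on $\hat g$ factorizes as $|\hat g(\xi)|\le C\prod_{i=1}^n e^{-\varepsilon' |\xi_i|^{1/\mu}}$ for some $\varepsilon'>0$. Integrating in $(\xi_2,\ldots,\xi_n)$ and Fourier-inverting in those variables produces, for every fixed $(x_2,\ldots,x_n)$, a partial Fourier transform $\hat g^{(1)}(\xi_1,x_2,\ldots,x_n)$ satisfying $|\hat g^{(1)}(\xi_1,x_2,\ldots,x_n)|\le C'e^{-\varepsilon'|\xi_1|^{1/\mu}}$ uniformly in $(x_2,\ldots,x_n)$. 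The same splitting applied to the hypothesis on $g$ gives the corresponding slice bound $|g(x_1,\ldots,x_n)|\le Ce^{-\varepsilon'|x_1|^{1/\nu}}$. If the one-dimensional version of the theorem holds, each slice vanishes and $g\equiv 0$ follows by continuity.

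In dimension one, $\mu+\nu<1$ forces $\mu<1$, so $1/\mu>1$, and the Fourier inversion formula
\[
g(z)=\frac{1}{2\pi}\int_{\mathbb R}e^{iz\xi}\,\hat g(\xi)\,\mathrm d\xi,\qquad z\in\mathbb C,
\]
converges absolutely for every $z$ and provides an entire extension of $g$. A Laplace-type estimate maximizing the exponent $-y\xi-\varepsilon|\xi|^{1/\mu}$ at $|\xi|\sim|y|^{\mu/(1-\mu)}$ yields
\[
|g(x+iy)|\le C'\exp\bigl(c\,|y|^{1/(1-\mu)}\bigr),\qquad c>0,
\]
so $g$ is entire of order at most $\rho:=1/(1-\mu)$. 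On the real axis the hypothesis gives $|g(x)|\le Ce^{-\varepsilon|x|^{1/\nu}}$ with $1/\nu>\rho$, which is precisely the inequality equivalent to $\mu+\nu<1$. To conclude $g\equiv 0$ I would invoke a Phragmén--Lindelöf/Hadamard-type principle: an entire function of finite order $\rho$ cannot decay along a ray strictly faster than its order permits. Concretely, one partitions the upper half plane into sectors of opening strictly less than $\pi/\rho$ sharing edges on the real axis; in each sector the growth bound $e^{c|\Im z|^\rho}$ on the non-real edge and the fast real-axis decay combine via Phragmén--Lindelöf, and a Hadamard factorization argument (equivalently, a Jensen estimate of the zero-counting function) then forces $g$ identically zero.

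The main obstacle is this final complex-analytic step. Since $\rho\ge 1$ the natural sector of opening $\pi/\rho$ is not wider than $\pi$, so the naive half-plane Phragmén--Lindelöf principle is unavailable, and the sectorial splitting must be carefully combined with auxiliary damping multipliers of the form $\exp(-\lambda(-iz)^\alpha)$ with a suitably chosen branch and exponent $\alpha\in(\rho,1/\nu)$, arranged so that the strict gap $1/\nu-\rho>0$ is actually exploited to produce a bounded entire function decaying on every ray. The dimensional reduction is technically routine but fiddly; the one-variable complex analysis is where the real work lies.
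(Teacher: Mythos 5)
This is a cited result in the paper (Theorem 6.1.10 of \cite{MR2668420}), not something the author proves; so there is no in-paper argument to compare against, and the assessment must be of your plan on its own merits. Your preliminary reductions are sound. The reduction to $(i)$ is valid because each of $(ii)$--$(v)$ gives back the decay bounds $(i)$ after a Stirling-type summation and a Sobolev embedding, without ever invoking $\mu+\nu\ge1$ (only the converse implications need that). The dimensional reduction via the subadditivity $\vert\xi\vert^{1/\mu}\gtrsim\sum_i\vert\xi_i\vert^{1/\mu}$ and partial Fourier inversion is also correct since $1/\mu>1$ guarantees absolute convergence of the $(\xi_2,\dots,\xi_n)$-integral. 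And the entire extension with growth $\vert g(x+iy)\vert\le C'\exp(c\vert y\vert^{1/(1-\mu)})$, together with the observation that $\mu+\nu<1$ is exactly $1/\nu>1/(1-\mu)=:\rho$, sets up the right dichotomy.

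The gap is precisely the step you flag, and as sketched it does not close. First, the geometric description is off: you cannot partition the upper half-plane into sectors ``sharing edges on the real axis''; what is needed is a finite chain of sectors of opening strictly less than $\pi/a$ (for some $a\in(\rho,1/\nu)$) sharing edges \emph{with each other}, starting at the real axis and sweeping up to $\arg z=\pi$. Second, a single application of Phragm\'en--Lindel\"of with one damping factor cannot do the whole half-plane, because for $z$ near the real axis the multiplier $\exp(-\lambda(e^{-i\alpha}z)^a)$ must \emph{grow} in order to decay on the far edge $\arg z=\theta$, and since $a>\rho\ge1$ the resulting order-$a$ growth is incompatible with Phragm\'en--Lindel\"of in any sector of opening $\ge\pi/a$. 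One instead has to bootstrap: in the first sector $(0,\theta_1)$, $\theta_1<\pi/a$, use a multiplier $\exp(-\lambda(e^{-i\alpha_1}z)^a)$ with $\alpha_1$ just past $\pi/(2a)$, so that it is amplifying on the real axis (dominated by $e^{-\varepsilon x^{1/\nu}}$ because $1/\nu>a$) and damping on $\arg z=\theta_1$ (dominating $e^{c(\sin\theta_1)^\rho r^\rho}$ because $a>\rho$); Phragm\'en--Lindel\"of then gives genuine super-order decay of $g$ along $\arg z=\theta_1$, which serves as the input edge for the next sector, and finitely many iterations (roughly $\lceil\rho\rceil$ of them, with a strictly decreasing auxiliary sequence $a_1>a_2>\dots>\rho$) cover $(0,\pi)$. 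Third, once boundedness of $g$ on $\mathbb C$ is obtained, Liouville's theorem finishes; the Hadamard-factorization/Jensen detour you mention is unnecessary and is not obviously easier to make rigorous. None of this bookkeeping is present in your sketch, so as written the argument is incomplete in the exact place where the hypothesis $\mu+\nu<1$ does its work.
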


Gelfand-Shilov regularity can also be defined in terms of exponential decrease in $L^2(\mathbb R^n)$ as shown in the following result whose proof is given in order to make explicit the various implied constants.

\begin{lem}\label{02012020L1} Let $\mu,\nu>0$ be some positive real numbers satisfying $\nu+\mu\geq1$. There exists a positive constant $C>0$ such that for all Schwartz function $g\in\mathscr S(\mathbb R^n)$ satisfying that there exist some positive constants $0<\Lambda_1,\Lambda_2<1$ and $\Lambda_3>0$ such that
\begin{equation}\label{17112019E1}
	\big\Vert e^{\Lambda_1\vert x\vert^{\frac1{\nu}}}g\big\Vert_{L^2(\mathbb R^n)} + \big\Vert e^{\Lambda_2\vert D_x\vert^{\frac1\mu}}g\big\Vert_{L^2(\mathbb R^n)}\le \Lambda_3,
\end{equation}
then $g\in S^{\mu}_{\nu}(\mathbb R^n)$, with the following estimates for the associated seminorms
$$\forall(\alpha,\beta)\in\mathbb N^{2n},\quad\big\Vert x^{\alpha}\partial^{\beta}_xg\big\Vert_{L^2(\mathbb R^n)}\le\frac{C^{\vert\alpha\vert+\vert\beta\vert}}{\Lambda_1^{\nu\vert\alpha\vert}\Lambda_2^{\mu\vert\beta\vert}}\ (\alpha!)^{\nu}\ (\beta!)^{\mu}\ \Lambda_3.$$
\end{lem}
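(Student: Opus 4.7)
The proof rests on the scalar observation that any polynomial growth can be traded against an exponential of a positive power. A one-variable optimization of $r\mapsto r^k e^{-\Lambda r^{1/\nu}}$ yields
$$r^k\le\bigg(\frac{\nu k}{e\Lambda}\bigg)^{\nu k}e^{\Lambda r^{1/\nu}},$$
and combining this with Stirling's inequality $k^k\le e^k k!$ and $|\alpha|!\le n^{|\alpha|}\alpha!$, one obtains the pointwise majoration
$$|x^\alpha|\le|x|^{|\alpha|}\le\frac{C^{|\alpha|}}{\Lambda_1^{\nu|\alpha|}}(\alpha!)^\nu\,e^{\Lambda_1|x|^{1/\nu}}$$
for some constant $C>0$ independent of $\alpha$ and $\Lambda_1$. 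Multiplying by $|g|$, taking the $L^2$ norm, and using the first half of \eqref{17112019E1} produces the moment bound $\|x^\alpha g\|_{L^2}\le C^{|\alpha|}\Lambda_1^{-\nu|\alpha|}(\alpha!)^\nu\Lambda_3$. Running the very same scalar argument on $\widehat g$ in the frequency variable, together with Plancherel's theorem (in the paper's convention $\|\widehat g\|_{L^2}=(2\pi)^{n/2}\|g\|_{L^2}$) and the identity $\|e^{\Lambda_2|\xi|^{1/\mu}}\widehat g\|_{L^2}=(2\pi)^{n/2}\|e^{\Lambda_2|D_x|^{1/\mu}}g\|_{L^2}$, the second half of \eqref{17112019E1} symmetrically yields
$$\|\partial^\beta_xg\|_{L^2(\mathbb R^n)}=(2\pi)^{-n/2}\|\xi^\beta\widehat g\|_{L^2(\mathbb R^n)}\le\frac{C^{|\beta|}}{\Lambda_2^{\mu|\beta|}}(\beta!)^\mu\Lambda_3.$$

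Taken together, these two bounds show that $g$ satisfies the characterization $(iii)$ of $S^\mu_\nu(\mathbb R^n)$ recalled at the beginning of this appendix, with constants tracked in $\Lambda_1$, $\Lambda_2$, and $\Lambda_3$. The conclusion of the lemma is precisely the quantitative version of characterization $(iv)$, so it remains to pass from $(iii)$ to $(iv)$ while controlling the constants. I would do this through the integration-by-parts identity
$$\|x^\alpha\partial^\beta_xg\|_{L^2(\mathbb R^n)}^2=(-1)^{|\beta|}\big\langle g,\partial^\beta_x(x^{2\alpha}\partial^\beta_xg)\big\rangle_{L^2(\mathbb R^n)},$$
expanding $\partial^\beta_x(x^{2\alpha}\partial^\beta_xg)$ with the Leibniz rule, and applying Cauchy--Schwarz termwise to control the right-hand side by a finite sum of products $\|x^{2\alpha-\gamma}g\|_{L^2}\|\partial^{2\beta-\gamma}_xg\|_{L^2}$ evaluated via the individual bounds at the doubled multi-indices. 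The elementary identities $(2k)!\le 4^k(k!)^2$ and $\prod_i(2\alpha_i)!/(2\alpha_i-\gamma_i)!\le 4^{|\alpha|}\gamma!$, combined with $\binom{\beta}{\gamma}\gamma!=\beta!/(\beta-\gamma)!\le\beta!$, let one collect all the combinatorial factors into the prefactor $C^{|\alpha|+|\beta|}$, and taking the square root restores the announced dependence $(\alpha!)^\nu(\beta!)^\mu$.

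\textbf{Main obstacle.} The delicate point is the bookkeeping in the combination step: doubling both multi-indices a priori produces the factors $((2\alpha)!)^\nu\le 4^{\nu|\alpha|}(\alpha!)^{2\nu}$ and $((2\beta)!)^\mu\le 4^{\mu|\beta|}(\beta!)^{2\mu}$, which only collapse to the sharp form $(\alpha!)^\nu(\beta!)^\mu$ once the square root is taken and the extraneous factorials introduced by the Leibniz expansion are absorbed against the drop in the indices $2\alpha-\gamma$, $2\beta-\gamma$. A conceptually cleaner alternative would be to invoke directly the equivalence of characterizations $(iii)$ and $(iv)$ of $S^\mu_\nu(\mathbb R^n)$ from \cite{MR2668420} (Theorem~6.1.6), keeping track explicitly of how the constants $\Lambda_1$, $\Lambda_2$, and $\Lambda_3$ propagate through that argument; one can then conclude by a final invocation of the individual bounds derived in the first paragraph.
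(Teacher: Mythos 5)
Your proof follows the paper's route closely: optimize $r\mapsto r^{k}e^{-\Lambda r^{1/\nu}}$ to get the individual moment bounds $\|x^\alpha g\|_{L^2}$ and (via Plancherel) $\|\partial^\beta_x g\|_{L^2}$, then pass to the mixed seminorms $\|x^\alpha\partial^\beta_xg\|_{L^2}$ by the integration-by-parts identity, Leibniz expansion, and Cauchy--Schwarz. The first paragraph is correct and matches the paper's \eqref{15102020E2}--\eqref{15102020E3}.

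The gap is precisely in the combinatorial step you flag as the ``main obstacle,'' and the ``elementary identities'' you propose do not close it. The Leibniz expansion produces a coefficient $\binom{\beta}{\gamma}\binom{2\alpha}{\gamma}\gamma!$ multiplying $\|x^{2\alpha-\gamma}g\|_{L^2}\|\partial^{2\beta-\gamma}_xg\|_{L^2}$, hence after inserting the individual bounds the quantity to control is $\binom{\beta}{\gamma}\binom{2\alpha}{\gamma}\,\gamma!\,((2\alpha-\gamma)!)^\nu((2\beta-\gamma)!)^\mu$. Your proposal $\binom{\beta}{\gamma}\gamma!\le\beta!$ trades the raw $\gamma!$ against the binomial rather than against the factorials carrying the exponents $\nu$ and $\mu$; carrying this out leaves an extraneous factor $\beta!$ in $\|x^\alpha\partial^\beta_xg\|_{L^2}^{2}$ and therefore $\sqrt{\beta!}$ in the seminorm, which is not subsumed by $(\beta!)^\mu$ when $\mu<\tfrac12$. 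The correct absorption, and the only place the hypothesis $\nu+\mu\ge1$ enters, is to write $\gamma!\le(\gamma!)^\nu(\gamma!)^\mu$ (valid since $\gamma!\ge1$ and $\nu+\mu\ge1$) and then use $\gamma!(2\alpha-\gamma)!\le(2\alpha)!$ and $\gamma!(2\beta-\gamma)!\le(2\beta)!$ to get $\gamma!\,((2\alpha-\gamma)!)^\nu((2\beta-\gamma)!)^\mu\le((2\alpha)!)^\nu((2\beta)!)^\mu\le 4^{\nu|\alpha|+\mu|\beta|}(\alpha!)^{2\nu}(\beta!)^{2\mu}$; the binomials are then disposed of separately through $\sum_{\gamma}\binom{\beta}{\gamma}\binom{2\alpha}{\gamma}\le 2^{2|\alpha|+|\beta|}$. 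Your fallback suggestion of invoking \cite{MR2668420} (Theorem 6.1.6) in quantitative form would also need this same $\gamma!$-splitting, so the gap is not avoided by that route; it is the one nontrivial arithmetic step separating characterization $(iii)$ from $(iv)$ with controlled constants.
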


\begin{proof} We refer to \cite{MR2668420} (Subsection 0.3) for the various factorial estimates and estimates involving binomial coefficients used in the following. Let $g\in\mathscr S(\mathbb R^n)$ be a Schwartz function satisfying \eqref{17112019E1} for some $0<\Lambda_1,\Lambda_2<1$ and $\Lambda_3>0$. We first deduce from \eqref{17112019E1} and the estimates 
$$\forall p,q>0,\forall x\geq0,\quad x^pe^{-x^q}\le\bigg(\frac p{eq}\bigg)^{\frac pq},$$
coming from a straightforward study of function, and
$$\forall\alpha\in\mathbb N^n,\quad\vert\alpha\vert^{\vert\alpha\vert}\le e^{\vert\alpha\vert}\vert\alpha\vert!\le (ne)^{\vert\alpha\vert}\alpha!,$$
that for all $\alpha\in\mathbb N^n$,
\begin{equation}\label{15102020E2}
	\big\Vert x^{\alpha}g\big\Vert_{L^2(\mathbb R^n)} 
	= \big\Vert x^{\alpha}e^{-\Lambda_1\vert x\vert^{\frac 1{\nu}}}e^{\Lambda_1\vert x\vert^{\frac1{\nu}}}g\big\Vert_{L^2(\mathbb R^n)}
	\le\bigg(\frac{\nu\vert\alpha\vert}{e\Lambda_1}\bigg)^{\nu\vert\alpha\vert}\Lambda_3
	\le\bigg(\frac{\nu n}{\Lambda_1}\bigg)^{\nu\vert\alpha\vert}\ (\alpha!)^{\nu}\ \Lambda_3.
\end{equation}
The very same arguments and Plancherel's theorem also imply that for all $\beta\in\mathbb N^n$,
\begin{equation}\label{15102020E3}
	\big\Vert \partial^{\beta}_xg\big\Vert_{L^2(\mathbb R^n)} 
	= \big\Vert\partial^{\beta}_xe^{-\Lambda_2\vert D_x\vert^{\frac1{\mu}}}e^{\Lambda_2\vert D_x\vert^{\frac1{\mu}}}g\big\Vert_{L^2(\mathbb R^n)}
	\le\bigg(\frac{\mu\vert\beta\vert}{e\Lambda_2}\bigg)^{\mu\vert\beta\vert}\Lambda_3
	\le\bigg(\frac{\mu n}{\Lambda_2}\bigg)^{\mu\vert\beta\vert}\ (\beta!)^{\mu}\ \Lambda_3.
\end{equation}
Let $(\alpha,\beta)\in\mathbb N^{2n}$ fixed. An integration by parts shows that
$$\big\Vert x^{\alpha}\partial^{\beta}_xg\big\Vert^2_{L^2(\mathbb R^n)} = \big\langle x^{\alpha}\partial^{\beta}_xg,x^{\alpha}\partial^{\beta}_xg\big\rangle_{L^2(\mathbb R^n)}
= (-1)^{\beta}\big\langle\partial^{\beta}_x(x^{2\alpha}\partial^{\beta}_xg),g\big\rangle_{L^2(\mathbb R^n)},$$
while Leibniz' formula provides
\begin{align*}
	\partial^{\beta}_x(x^{2\alpha}\partial^{\beta}_xg) & = \sum_{\gamma\le\beta}\binom{\beta}{\gamma}\partial^{\gamma}_x(x^{2\alpha})\ \partial^{\beta-\gamma}_x(\partial^{\beta}_xg) \\[5pt]
	& = \sum_{\gamma\le\beta\ \gamma\le2\alpha}\binom{\beta}{\gamma}\frac{(2\alpha)!}{(2\alpha-\gamma)!}\ x^{2\alpha-\gamma}\ \partial^{2\beta-\gamma}_xg \\[5pt]
	& = \sum_{\gamma\le\beta\ \gamma\le2\alpha}\binom{\beta}{\gamma}\binom{2\alpha}{\gamma}\gamma!\ x^{2\alpha-\gamma}\ \partial^{2\beta-\gamma}_xg.
\end{align*}
We therefore deduce from Cauchy-Schwarz' inequality that
\begin{equation}\label{15102020E4}
	\big\Vert x^{\alpha}\partial^{\beta}_xg\big\Vert^2_{L^2(\mathbb R^n)}
\le\sum_{\gamma\le\beta\ \gamma\le2\alpha}\binom{\beta}{\gamma}\binom{2\alpha}{\gamma}\gamma!\ \big\Vert x^{2\alpha-\gamma}g\big\Vert_{L^2(\mathbb R^n)}\ \big\Vert\partial^{2\beta-\gamma}_xg\big\Vert_{L^2(\mathbb R^n)},
\end{equation}
with the following estimates coming from \eqref{15102020E2} and \eqref{15102020E3},
\begin{multline}\label{15102020E5}
	\gamma!\ \big\Vert x^{2\alpha-\gamma}g\big\Vert_{L^2(\mathbb R^n)}\ \big\Vert\partial^{2\beta-\gamma}_xg\big\Vert_{L^2(\mathbb R^n)} \\
	\le\bigg(\frac{\nu n}{\Lambda_1}\bigg)^{\nu\vert2\alpha-\gamma\vert}\bigg(\frac{\mu n}{\Lambda_2}\bigg)^{\mu\vert2\beta-\gamma\vert}\ \gamma!\ ((2\alpha-\gamma)!)^{\nu}\ ((2\beta-\gamma)!)^{\mu}\ (\Lambda_3)^2.
\end{multline}
Since $0<\Lambda_1,\Lambda_2<1$, notice that
\begin{equation}\label{15102020E8}
	\bigg(\frac{\nu n}{\Lambda_1}\bigg)^{\nu\vert2\alpha-\gamma\vert}\le\frac{\max(1,\nu n)^{2\nu\vert\alpha\vert}}{\Lambda_1^{2\nu\vert\alpha\vert}}\quad\text{and}\quad\bigg(\frac{\mu n}{\Lambda_2}\bigg)^{\mu\vert2\beta-\gamma\vert}\le\frac{\max(1,\mu n)^{2\mu\vert\beta\vert}}{\Lambda_2^{2\mu\vert\beta\vert}}.
\end{equation}
Moreover, since $\nu+\mu\geq1$, we also get while exploiting the following factorial estimates,
$$\forall\delta,\eta\in\mathbb N^n,\quad \delta!\eta!\le (\delta+\eta)!\le2^{\vert\delta+\eta\vert}\delta!\eta!,$$
that
\begin{align}\label{15102020E6}
	\gamma!\ ((2\alpha-\gamma)!)^{\nu}\ ((2\beta-\gamma)!)^{\mu}
	& \le (\gamma!\ (2\alpha-\gamma)!)^{\nu}\ (\gamma!\ (2\beta-\gamma)!)^{\mu} \\[5pt]
	& \le ((2\alpha)!)^{\nu}\ ((2\beta)!)^{\mu}
	\le 4^{\nu\vert\alpha\vert+\mu\vert\beta\vert}\ (\alpha!)^{2\nu}\ (\beta!)^{2\mu}. \nonumber
\end{align}
We also have from classical results concerning binomial coefficients that
\begin{equation}\label{15102020E7}
	\sum_{\gamma\le\beta\ \gamma\le2\alpha}\binom{\beta}{\gamma}\binom{2\alpha}{\gamma}\le\sum_{\gamma\le\beta}\binom{\beta}{\gamma}\sum_{\gamma\le2\alpha}\binom{2\alpha}{\gamma}
	= 2^{2\vert\alpha\vert+\vert\beta\vert}.
\end{equation}
Finally, we deduce from \eqref{15102020E4}, \eqref{15102020E5}, \eqref{15102020E8}, \eqref{15102020E6} and \eqref{15102020E7} that there exists a positive constant $C>0$ only depending on $\nu>0$ and $\nu>0$ (and not on the function $g$) such that for all $(\alpha,\beta)\in\mathbb N^{2n}$,
$$\big\Vert x^{\alpha}\partial^{\beta}_xg\big\Vert_{L^2(\mathbb R^n)}\le\frac{C^{\vert\alpha\vert+\vert\beta\vert}}{\Lambda_1^{\nu\vert\alpha\vert}\Lambda_2^{\mu\vert\beta\vert}}\ (\alpha!)^{\nu}\ (\beta!)^{\mu}\ \Lambda_3.$$
This ends the proof of Lemma \ref{02012020L1}.
\end{proof}

In the case where the ratio $\mu/\nu\in\mathbb Q$ is a rational number, the Gelfand-Shilov space $S^{\mu}_{\nu}(\mathbb R^n)$ can also be nicely characterized through the decomposition into the basis of eigenfunctions of a large class of anisotropic Shubin operators, whose basic model is the operator $H_{k,m}$ defined in \eqref{19022020E1}, with $k,m\geq1$ two positive integers. Let $(\psi_j)_j$ be an orthonormal basis of $L^2(\mathbb R^n)$ composed of eigenfunctions of the operator $H_{k,m}$. Given a positive real number $a\geq1$, we can characterize the Gelfand-Shilov space $S^{\mu}_{\nu}(\mathbb R^n)$, with 
$$\mu = \frac{ka}{k+m}\quad\text{and}\quad\nu = \frac{ma}{k+m},$$ 
in the following way, according to the result \cite{MR3996060} (Theorem 1.4) by M. Cappiello, T. Gramchev, S. Pilipovi\'c and L. Rodino,
\begin{align}\label{15102020E1}
	g\in S^{\frac{ka}{k+m}}_{\frac{ma}{k+m}}(\mathbb R^n) & \Leftrightarrow\exists\varepsilon>0,\quad\sum_{j=0}^{+\infty}\big\vert\langle g,\psi_j\rangle_{L^2(\mathbb R^n)}\big\vert^2e^{\varepsilon\lambda_j^{\frac{k+m}{2kma}}}<+\infty, \\[5pt]
	& \Leftrightarrow\exists\varepsilon>0,\quad\sup_{j\geq0}\big\vert\langle g,\psi_j\rangle_{L^2(\mathbb R^n)}\big\vert^2e^{\varepsilon\lambda_j^{\frac{k+m}{2kma}}} < +\infty, \nonumber \\[5pt]
	& \Leftrightarrow\exists c>0,\exists\varepsilon>0,\forall j\geq0,\quad\big\vert\langle g,\psi_j\rangle_{L^2(\mathbb R^n)}\big\vert\le ce^{-\varepsilon j^{\frac1{an}}}, \nonumber
\end{align}
where $\lambda_j>0$ denotes the eigenvalue associated with the eigenfunction $\psi_j\in L^2(\mathbb R^n)$. Notice that such a characterization in the case where $\mu/\nu\notin\mathbb Q$ has not been found yet, see \cite{MR3996060} (Section 3).

\subsection{Density in the graph} The purpose of this second subsection is to prove that the Schwartz space $\mathscr S(\mathbb R^n)$ is dense in the domain of the maximal realizations on $L^2(\mathbb R^n)$ of the differential operators equipped with the graph norm. More precisely, by using results of Weyl calculus introduced in Section \ref{secgard}, we aim at proving the following

\begin{prop}\label{07102020P1} Let $p:\mathbb R^{2n}\rightarrow\mathbb C$ be a polynomial and $p^w$ be the associated differential operator equipped with the following domain 
$$D(p^w) = \big\{u\in L^2(\mathbb R^n) : p^wu\in L^2(\mathbb R^n)\big\}.$$
Then, for all $u\in D(p^w)$, there exists a sequence $(u_j)_j$ in $\mathscr S(\mathbb R^n)$ such that
$$\lim_{j\rightarrow+\infty}u_j = u\quad\text{and}\quad \lim_{j\rightarrow+\infty}p^wu_j = p^wu\quad \text{in $L^2(\mathbb R^n)$}.$$
\end{prop}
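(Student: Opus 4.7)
My plan is to approximate $u\in D(p^w)$ by phase-space truncations $u_\varepsilon=\chi_\varepsilon^w u$ where $\chi_\varepsilon(x,\xi)=\chi(\varepsilon x,\varepsilon\xi)$ for a fixed cutoff $\chi\in C_0^\infty(\mathbb R^{2n})$ with $\chi\equiv 1$ near the origin. The key asset is that $p$ is a polynomial, so the Weyl composition formula \eqref{16112020E3} is exact and terminates in a finite number of terms whenever one of the factors is $p$.

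The first task is to verify that $u_\varepsilon\in\mathscr S(\mathbb R^n)$ for every $\varepsilon\in(0,1]$. For arbitrary multi-indices $\alpha,\beta$, the exact composition $x^\alpha\xi^\beta\,\sharp\,\chi_\varepsilon$ is a finite sum of products of derivatives of $\chi_\varepsilon$ and of monomials in $(x,\xi)$; each summand remains in $C_0^\infty(\mathbb R^{2n})$. Its Weyl quantization is therefore $L^2$-bounded by the quantitative Calder\'on--Vaillancourt theorem, and one obtains $x^\alpha D_x^\beta u_\varepsilon=(x^\alpha\xi^\beta\,\sharp\,\chi_\varepsilon)^w u\in L^2$ for every $\alpha,\beta$, which is one of the equivalent characterizations of Schwartz regularity. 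To obtain $u_\varepsilon\to u$ in $L^2$, I would observe that $|\partial^\gamma\chi_\varepsilon|\le\varepsilon^{|\gamma|}\|\partial^\gamma\chi\|_\infty$, so the seminorms of $\chi_\varepsilon$ in the class $S(1;|dx|^2+|d\xi|^2)$ are uniformly bounded in $\varepsilon$; Calder\'on--Vaillancourt gives a uniform $L^2$-bound on $\chi_\varepsilon^w$. A direct inspection via the composition formula shows $\chi_\varepsilon^w v\to v$ in $L^2$ for $v\in\mathscr S$, and this extends to $u\in L^2$ by density.

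The main step is to show $p^w u_\varepsilon\to p^w u$ in $L^2$. Using the exactness of the composition and a duality argument (legitimate because $p^w u\in L^2$ so that $\chi_\varepsilon^w(p^w u)=(\chi_\varepsilon\,\sharp\,p)^w u$), I would write
\[
p^w u_\varepsilon=(p\,\sharp\,\chi_\varepsilon)^w u=\chi_\varepsilon^w(p^w u)+[p,\chi_\varepsilon]_\sharp^w u,
\]
where $[p,\chi_\varepsilon]_\sharp=p\,\sharp\,\chi_\varepsilon-\chi_\varepsilon\,\sharp\,p$ is the Moyal commutator. The first term converges to $p^w u$ in $L^2$ by applying the previous paragraph to $p^w u$, so the problem reduces to showing that $\|[p,\chi_\varepsilon]_\sharp^w u\|_{L^2}\to 0$.

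The main obstacle is this commutator estimate. By the exact Moyal formula, $[p,\chi_\varepsilon]_\sharp$ is a finite sum of terms $c_{\alpha\beta}(\partial_\xi^\alpha D_x^\beta p)(\partial_\xi^\beta D_x^\alpha\chi_\varepsilon)$ with $|\alpha+\beta|$ odd and $\le\deg p$; each derivative of $\chi_\varepsilon$ contributes a factor $\varepsilon^{|\alpha+\beta|}$ and is supported, since $\chi\equiv 1$ near the origin, in the phase-space region where $|x|,|\xi|\gtrsim 1/\varepsilon$. Working in the semi-classical Hörmander metric $g=\varepsilon^2(|dx|^2+|d\xi|^2)$ and invoking the Weyl-calculus tools developed in Section \ref{secgard} (the anti-Wick framework of Lemmas \ref{09092020L1}--\ref{04092020L1}, together with the quantitative Calder\'on--Vaillancourt theorem of \cite{MR1696697}) I would obtain uniform $L^2$-bounds on each commutator term. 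Strong convergence to $0$ on $L^2$ then follows from the fact that the symbols' phase-space supports recede to infinity: tested against the rapidly decaying Wigner transform of any $v\in\mathscr S$ one has $\langle[p,\chi_\varepsilon]_\sharp^w u,v\rangle\to 0$, and the previous uniform bounds propagate this vanishing to arbitrary $u\in L^2$ by density, completing the proof.
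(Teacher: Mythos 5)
Your strategy is the same as the paper's: approximate $u$ by the phase-space truncations $u_\varepsilon=\chi_\varepsilon^wu$, use compact support of $\chi_\varepsilon$ to get Schwartz regularity of $u_\varepsilon$ and $L^2$-convergence, and reduce the convergence of $p^wu_\varepsilon$ to that of a Moyal commutator $[p,\chi_\varepsilon]_\sharp^w u$. The decomposition $p^wu_\varepsilon=\chi_\varepsilon^w(p^wu)+[p,\chi_\varepsilon]^w_\sharp u$ and the observation that the sharp-composition is exact and finite because $p$ is a polynomial are exactly what the paper exploits (with $\varepsilon=1/j$).

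However, your treatment of the commutator term has a genuine gap. You argue that for $v\in\mathscr S(\mathbb R^n)$ the pairing $\langle[p,\chi_\varepsilon]^w_\sharp u,v\rangle$ tends to $0$ because the commutator symbol is supported in the region $|x|+|\xi|\gtrsim 1/\varepsilon$ while the (cross-)Wigner transform decays rapidly, and you then invoke the uniform Calder\'on--Vaillancourt bounds on $[p,\chi_\varepsilon]^w_\sharp$ to pass to general $u\in L^2(\mathbb R^n)$ by density. This establishes only \emph{weak} $L^2$-convergence $[p,\chi_\varepsilon]^w_\sharp u\rightharpoonup 0$, not the strong convergence $\Vert[p,\chi_\varepsilon]^w_\sharp u\Vert_{L^2}\to 0$ that the proposition requires, and a uniform operator bound combined with weak convergence does not upgrade to strong convergence (think of an orthonormal sequence in $\ell^2$). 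The paper handles this by first proving \emph{strong} convergence $a_\varepsilon^w v\to 0$ in $\mathscr S(\mathbb R^n)$ for $v\in\mathscr S(\mathbb R^n)$, via its Proposition \ref{09112020P2} (Lemma 1.1.3 of \cite{MR2599384}): since the commutator symbols $a_\varepsilon=[p,\chi_\varepsilon]_\sharp$ are bounded in $C^\infty_b(\mathbb R^{2n})$ and converge to $0$ in $C^\infty(\mathbb R^{2n})$, the operators converge strongly on $\mathscr S$; the uniform Calder\'on--Vaillancourt bound then lets one pass from $\mathscr S$ to $L^2$ by the standard $3\varepsilon$-argument, yielding strong $L^2$-convergence. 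Your argument needs to be modified in exactly this way: instead of a duality/Wigner test, either invoke Proposition \ref{09112020P2} or verify directly that each finite term $c_{\alpha\beta}(\partial_\xi^\alpha D_x^\beta p)(\partial_\xi^\beta D_x^\alpha\chi_\varepsilon)^wv\to 0$ strongly in $L^2$ for $v\in\mathscr S$. Finally, the appeal to the anti-Wick machinery of Section \ref{secgard} and to a semiclassical H\"ormander metric is unnecessary here; the uniform $C_b^\infty$-bounds of the symbols together with Calder\'on--Vaillancourt already suffice, as in the paper.
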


This proof is based on the symbolic calculus and we will need to use the following approximation result coming from \cite{MR2599384} (Lemma 1.1.3). This proposition is originally stated in the context of standard symbolic calculus, but can easily be adapted in the context of Weyl calculus. This can be done by using \cite{MR2599384} (Proposition 1.1.10) e.g. which makes the link between the standard and the Weyl quantizations.

\begin{prop}[Lemma 1.1.3 in \cite{MR2599384}]\label{09112020P2} Let $(a_j)_j$ be a sequence in $\mathscr S(\mathbb R^{2n})$ being bounded in the space $C^{\infty}_b(\mathbb R^{2n})$ and converging in $C^{\infty}(\mathbb R^{2n})$ to a function $a\in C^{\infty}(\mathbb R^{2n})$. Then, $a$ belongs to $C^{\infty}_b(\mathbb R^{2n})$ and for all $u\in\mathscr S(\mathbb R^n)$,
$$\lim_{j\rightarrow+\infty}a^w_ju = a^wu\quad \text{in $\mathscr S(\mathbb R^n)$}.$$
\end{prop}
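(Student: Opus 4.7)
My plan is to construct an approximating sequence via phase-space mollification and to verify convergence in $L^2$ both for $u_j \to u$ and for $p^w u_j \to p^w u$ using the Weyl composition formula, with Proposition \ref{09112020P2} serving as the key convergence tool.

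I will fix $\chi \in C_c^\infty(\mathbb R^{2n})$ with $\chi(0) = 1$, set $\chi_j(x,\xi) = \chi(x/j, \xi/j)$ for $j \ge 1$, and define $u_j := \chi_j^w u$. Since $\chi_j$ is a smooth symbol with compact support, the Schwartz kernel of the Weyl operator $\chi_j^w$---obtained by the inverse Fourier transform of $\chi_j$ in the frequency variable composed with a linear change of variables---lies in $\mathscr S(\mathbb R^{2n})$, so $\chi_j^w$ maps $L^2(\mathbb R^n)$ into $\mathscr S(\mathbb R^n)$; this gives $u_j \in \mathscr S(\mathbb R^n)$. For the $L^2$-convergence $u_j \to u$, I will observe that $\partial^\gamma \chi_j = j^{-|\gamma|}(\partial^\gamma \chi)(\cdot/j, \cdot/j)$ is uniformly bounded in $j \ge 1$, so $(\chi_j)_j$ is bounded in $C_b^\infty(\mathbb R^{2n})$ and $\chi_j \to 1$ in $C^\infty(\mathbb R^{2n})$; Proposition \ref{09112020P2} then yields $\chi_j^w v \to v$ in $\mathscr S(\mathbb R^n)$, a fortiori in $L^2$, for every $v \in \mathscr S$. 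The quantitative Calder\'on-Vaillancourt theorem (as used in Lemma \ref{11092020L2}) gives uniform $L^2$-boundedness of $\chi_j^w$, and a standard three-epsilon density argument extends this convergence from $\mathscr S$ to all of $L^2(\mathbb R^n)$.

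The crucial step is to establish $p^w u_j \to p^w u$ in $L^2$ for $u \in D(p^w)$. Since $p$ is a polynomial, the composition formula \eqref{16112020E3} reduces to an exact finite sum, and I will use it to write $p^w \chi_j^w = (p\,\sharp\, \chi_j)^w$ and
\begin{equation*}
 p^w u_j = \chi_j^w(p^w u) + q_j^w u, \qquad q_j := p\,\sharp\, \chi_j - \chi_j\,\sharp\, p.
\end{equation*}
The first term tends to $p^w u$ in $L^2$ by applying the previously established convergence to the $L^2$ function $p^w u$, so the proof reduces to the commutator estimate $q_j^w u \to 0$ in $L^2$.

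The hard part will be this commutator estimate. The symbol $q_j$ is a finite linear combination of terms $(\partial_\xi^\alpha D_x^\beta p)(\partial_\xi^\beta D_x^\alpha \chi_j)$ with $|\alpha+\beta| \ge 1$; the second factor equals $j^{-|\alpha+\beta|}(\partial_\xi^\beta D_x^\alpha \chi)(\cdot/j, \cdot/j)$, a symbol with $C_b^\infty$ seminorms of order $O(j^{-|\alpha+\beta|})$ whose Weyl operator therefore has $L^2$-operator norm $O(j^{-|\alpha+\beta|})$ by Calder\'on-Vaillancourt, while the first factor is a polynomial of degree $d - |\alpha+\beta|$ strictly smaller than $d = \deg p$. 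My plan is to rewrite $q_j^w u$ as a finite sum of terms of the form $(\partial_\xi^\beta D_x^\alpha \chi_j)^w w_{\alpha,\beta}$ with $w_{\alpha,\beta} \in L^2$, by iteratively applying the exact Weyl-Moyal commutator identities to move monomials $x^\gamma D_x^\delta$ (with $|\gamma+\delta| < d$) from the left past the rescaled operator; each such commutation produces an additional $j^{-1}$ factor accompanied by strictly lower-degree monomials. The inputs that appear are monomial actions $x^\gamma D_x^\delta u$ of total order strictly less than $d$, which can be shown to lie in $L^2$ under the hypothesis $u \in D(p^w)$ by an interpolation between $\|u\|_{L^2}$ and $\|p^w u\|_{L^2}$; combining this with the $O(j^{-|\alpha+\beta|})$ operator-norm decay of $(\partial_\xi^\beta D_x^\alpha \chi_j)^w$ yields $q_j^w u \to 0$ in $L^2$, completing the proof.
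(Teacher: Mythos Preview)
Your proposal does not address Proposition~\ref{09112020P2}. That proposition is quoted from Lerner's book and the paper offers no proof of it; it is a statement about convergence $a_j^w u \to a^w u$ in $\mathscr S(\mathbb R^n)$ under suitable symbol convergence. What you have written is a proof sketch for Proposition~\ref{07102020P1} (density of $\mathscr S(\mathbb R^n)$ in the graph of $p^w$), in which Proposition~\ref{09112020P2} is \emph{used} as a tool rather than established.

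Read as an attempt at Proposition~\ref{07102020P1}, your construction of $u_j=\chi_j^w u$, the mapping $\chi_j^w:L^2\to\mathscr S$, and the Calder\'on--Vaillancourt/density argument for $u_j\to u$ all coincide with the paper's proof. The divergence is in the commutator step $[p^w,\chi_j^w]u\to 0$. The paper computes the Weyl symbol $a_j$ of the commutator via \eqref{16112020E3}, argues that $(a_j)_j$ is bounded in $C^\infty_b(\mathbb R^{2n})$ and converges to $0$ in $C^\infty(\mathbb R^{2n})$, and then reapplies Proposition~\ref{09112020P2} together with the same density argument already used for $\chi_j^w$; nothing beyond $u\in L^2$ is required. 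You instead try to factor $q_j^w u$ into terms $(\partial\chi_j)^w(x^\gamma D_x^\delta u)$ and assert that all monomial actions $x^\gamma D_x^\delta u$ with $|\gamma+\delta|<d$ lie in $L^2$ ``by an interpolation between $\|u\|_{L^2}$ and $\|p^w u\|_{L^2}$''. That interpolation is not available for a general polynomial $p$: membership in $D(p^w)$ does not control arbitrary lower-order monomials (for $p(\xi)=\xi_1^d$ one has $D(p^w)=H^d$ in the $x_1$-direction, yet $x_1 u$ need not be in $L^2$). The monomials that actually arise from your iterated commutations are constrained by which derivatives of $p$ are nonzero, so the approach may be salvageable with a finer analysis, but as written the $L^2$ bound on the inputs $w_{\alpha,\beta}$ is asserted and not proved, and it is precisely this difficulty that the paper's symbol-level route is designed to bypass.
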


Let us now tackle the proof of Proposition \ref{07102020P1}. Let $\chi\in C^{\infty}_0(\mathbb R^{2n})$ be a smooth function satisfying that $0\le\chi\le1$ and $\chi = 1$ on the unit ball $B(0,1)$. For all $j\geq1$, we consider the compactly supported function $\chi_j$ defined by $\chi_j(x,\xi) = \chi(x/j,\xi/j)$ for all $(x,\xi)\in\mathbb R^{2n}$. Since the function $\chi$ is compactly supported, we get that
\begin{equation}\label{23052018E1}
	\forall j\geq1,\quad \chi_j^w : L^2(\mathbb R^n)\rightarrow\mathscr S(\mathbb R^n).
\end{equation}
Let us begin by checking the following property
\begin{equation}\label{09112020E1}
	\forall u\in L^2(\mathbb R^n),\quad \lim_{j\rightarrow+\infty}\chi^w_ju = u\quad \text{in $L^2(\mathbb R^n)$}.
\end{equation}
Let $u\in L^2(\mathbb R^n)$. We consider $\varepsilon>0$. By density, there exists a Schwartz function $v\in\mathscr S(\mathbb R^n)$ such that $\Vert u-v\Vert_{L^2(\mathbb R^n)}\le\varepsilon$. We can write
\begin{equation}\label{30112020E2}
	\chi_j^wu -u = \chi_j^w(u-v) + \chi_j^wv-v + v-u.
\end{equation}
By construction, the sequence of Schwartz symbols $(\chi_j)_j$ is bounded on $C^{\infty}_b(\mathbb R^{2n})$ and converges to $1$ in $C^{\infty}(\mathbb R^{2n})$. Proposition \ref{09112020P2} therefore implies that
\begin{equation}\label{30112020E3}
	\lim_{j\rightarrow+\infty}\chi_j^wv = v\quad \text{in $L^2(\mathbb R^n)$}.
\end{equation}
Moreover, the boundedness of the sequence $(\chi_j)_j$ in $C^{\infty}_b(\mathbb R^{2n})$ and a quantitative version of the Calder\'on-Vaillancourt's theorem, see e.g. \cite{MR1696697} (Theorem 1.2), provide the existence of a positive constant $c>0$ such that
\begin{equation}\label{30112020E4}
	\forall j\geq1,\quad\big\Vert\chi_j^w\big\Vert_{\mathcal L(L^2(\mathbb R^n))}\le c,
\end{equation}
where $\mathcal L(L^2(\mathbb R^n))$ denotes the space of bounded operators on $L^2(\mathbb R^n)$. We therefore deduce from \eqref{30112020E2}, \eqref{30112020E3} and \eqref{30112020E4} that 
$$\limsup_{j\rightarrow+\infty}\big\Vert\chi_j^wu -u\big\Vert_{L^2(\mathbb R^n)}\le(c+1)\varepsilon.$$
Since $\varepsilon>0$ is arbitrary, this proves that \eqref{09112020E1} actually holds.

Now, let us consider $u\in D(p^w)$ and set $u_j = \chi_j^wu$ for all $j\geq1$. According to \eqref{23052018E1} and \eqref{09112020E1}, $(u_j)_j$ is a sequence of Schwartz functions that converges to $u$ in $L^2(\mathbb R^n)$. Since $p^wu\in L^2(\mathbb R^n)$ by definition of the domain $D(p^w)$, we can apply once again \eqref{09112020E1} to get that
$$\lim_{j\rightarrow+\infty}\chi_j^wp^wu = p^wu\quad \text{in $L^2(\mathbb R^n)$}.$$
If the operators $\chi_j^w$ and $p^w$ were commutative, Proposition \ref{07102020P1} would be proven. It is not the case but to conclude, it is sufficient to check that 
\begin{equation}\label{09052018E9}
	\lim_{j\rightarrow+\infty}\big[p^w,\chi_j^w\big]u = 0\quad \text{in $L^2(\mathbb R^n)$}.
\end{equation}
Let $j\geq1$ and $a_j$ be the Weyl symbol of the commutator $[p^w,\chi_j^w]$. The strategy to establish \eqref{09052018E9} is to check that the sequence $(a_j)_j$ is bounded in $C^{\infty}_b(\mathbb R^{2n})$ and converges to 0 in $C^{\infty}(\mathbb R^{2n})$. Once this is done, Proposition \ref{09112020P2} implies that 
$$\forall v\in\mathscr S(\mathbb R^n),\quad \lim_{j\rightarrow+\infty}\big[p^w,\chi_j^w\big]v = 0\quad \text{in $L^2(\mathbb R^n)$},$$
and the very same arguments as the ones used to obtain \eqref{09112020E1} show that this convergence holds for all $v\in L^2(\mathbb R^n)$, and in particular for the function $u$. To that end, we will derive a formula for the symbol $a_j$. By using the same strategy as in the beginning of the proof of Lemma \ref{10092020L1}, that is, by using the fact that the symbol $p$ is a polynomial (of degree $d\geq1$ say) and the composition formula \eqref{16112020E3}, we get that the symbol $a_j$ is explicitly given by
\begin{equation}\label{09112020E2}
	a_j = p\ \sharp\ \chi_j -  \chi_j\ \sharp\ p= 2\sum_l\frac1{(2i)^l}\sum_{\vert\alpha\vert + \vert\beta\vert = l}\frac{(-1)^{\vert\beta\vert}}{\alpha!\beta!}(\partial^{\alpha}_{\xi}\partial^{\beta}_xp)(\partial^{\beta}_{\xi}\partial^{\alpha}_x\chi_j),
\end{equation}
the sum being taken over all the odd integers $l$ satisfying $1\le l\le d$. Notice that by definition of the cutoff function $\chi_j$,
$$\forall(\alpha,\beta)\in\mathbb N^{2n}, \forall(x,\xi)\in\mathbb R^{2n},\quad(\partial^{\alpha}_{\xi}\partial^{\beta}_x\chi_j)(x,\xi) = j^{-\vert\alpha+\beta\vert}(\partial^{\alpha}_{\xi}\partial^{\beta}_x\chi)(x/j,\xi/j).$$
This combined with Leibniz' formula shows that the sequence $(a_j)_j$ is bounded in $C^{\infty}_b(\mathbb R^{2n})$.
Moreover, the function $\chi$ is constant equal to $1$ in a neighborhood of $0$, which implies that for all $(\alpha,\beta)\in\mathbb N^{2n}$ satisfying $\vert\alpha+\beta\vert\geq1$ and $(x,\xi)\in\mathbb R^{2n}$,
$$\lim_{j\rightarrow+\infty}(\partial^{\alpha}_{\xi}\partial^{\beta}_x\chi_j)(x,\xi) = 0.$$
By using Leibniz's formula anew, we also deduce that the sequence $(a_j)_j$ converges to $0$ in $C^{\infty}(\mathbb R^{2n})$, the integers $l$ involved in \eqref{09112020E2} satisfying $1\le l\le d$. This ends the proof of \eqref{09052018E9} as announced, and therefore the one of Proposition \ref{07102020P1}.

\end{document}